\newcommand{\showcomments}{yes}
\renewcommand{\showcomments}{no}
\newsavebox{\commentbox}
\newcounter{intronum}
\newcounter{ax}
\newtheorem{thm}{Theorem}[section]
\newtheorem{lem}[thm]{Lemma}
\newtheorem{cor}[thm]{Corollary}
\newtheorem{prop}[thm]{Proposition}
\theoremstyle{definition}
\newtheorem{defn}[thm]{Definition}
\newtheorem{rem}[thm]{Remark}
\newtheorem{remi}[intronum]{Remark}
\newtheorem{exmp}[thm]{Example}
\newtheorem{notation}[thm]{Notation}
\newtheorem{conv}[thm]{Convention}
\newtheorem{prob}[thm]{Problem}
\newtheorem{question}[thm]{Question}
\newtheorem{claim}{Claim}
\newtheorem{claim*}{Claim}
\newtheorem*{obs}{Observation}
\newtheorem{cons}[thm]{Construction}
\DeclareMathOperator{\kernel}{ker}
\DeclareMathOperator{\image}{im}
\DeclareMathOperator{\rank}{rk}
\DeclareMathOperator{\Aut}{Aut}
\DeclareMathOperator{\Out}{Out}
\DeclareMathOperator{\stabilizer}{Stab}
\DeclareMathOperator{\diam}{diam}
\newcommand{\homology}{\ensuremath{{\sf{H}}}}
\newcommand{\field}[1]{\mathbb{#1}}
\newcommand{\integers}{\ensuremath{\field{Z}}}
\newcommand{\naturals}{\ensuremath{\field{N}}}
\newcommand{\reals}{\ensuremath{\field{R}}}
\newcommand{\Euclidean}{\ensuremath{\field{E}}}
\newcommand{\closure}[1]{Cl\left({#1}\right)}
\newcommand{\interior} [1] {{\ensuremath \text{\rm Int}(#1) }}
\newcommand{\Rmnum}[1]{\mathbf{{\expandafter\@slowromancap\romannumeral #1@}}}
\newcommand{\lcm}{\ensuremath{\mathrm{l}\mathrm{c}\mathrm{m}}}
\let\oldmarginpar\marginpar
\renewcommand\marginpar[1]{\-\oldmarginpar[\raggedleft\footnotesize #1]%
{\raggedright\footnotesize #1}}
\newcommand{\nclose}[1]{\langle\langle#1\rangle\rangle}
\newcommand{\actson}{\curvearrowright}
\newcounter{enumitemp}
\newcommand{\dist}{\textup{\textsf{d}}}
\newcommand{\OL}{\overleftarrow}
\newcommand{\OR}{\overrightarrow}
\newcommand{\gate}{\mathfrak g}
\newcommand{\vertices}{\mathrm{Vert}}
\newcommand{\edges}{\mathrm{Edge}}
\newcommand{\axis}{\mathrm{Axis}}
\newcommand*\colvec[2]{\ensuremath{\begin{pmatrix}#1\\#2\\\end{pmatrix}}}
\long\def\Restate#1#2#3#4{
\medskip\par\noindent
{\bf #1 \ref{#2} #3} {\it #4}\par\medskip }
\newcommand{\ttm}{\mathrm{RTT}}
\newcommand{\Fast}{\mathrm{Fast}}
\begin{document}
\title[Cubulating polynomial mapping tori]{Cubulating 
mapping tori of some polynomial growth free group automorphisms}

\author[M.~Hagen]{Mark Hagen}
\address{School of Mathematics, University of Bristol, Bristol, UK}
\email{markfhagen@posteo.net}

\author[D.T.~Wise]{Daniel T. Wise}
\address{Department of Mathematics, Weizmann Institute of Science, Rehovot, Israel}
\email{daniel.wise@weizmann.ac.il}

\date{\today}
\subjclass[2020]{Primary: 20F65; Secondary: 57M20}
\keywords{free-by-cyclic group, CAT(0) cube complex, relative train track map, 
polynomial-growth automorphism, UPG, cubical small-cancellation, wallspace}

\maketitle

\begin{abstract}
 Let $F$ be a finite-rank free group and let $\Phi\in\Out(F)$ have polynomial growth.  Let $G=F\rtimes_\Phi\integers$.  We give sufficient conditions on $\Phi$ that ensure $G$ acts freely on a CAT(0) cube complex.  For $d=1$, the class of $G$ that we cubulate strictly contains tubular free-by-cyclic groups, which were cubulated by Button.  For $d>1$, we cubulate $G$ provided, for instance, the linear-growth mapping tori contained in $G$ are tubular and $G$ satisfies a condition on intersections of certain centralisers.  These conditions are satisfied when the growth rate of $\Phi$ is as large as possible for $F$.  Using this, we show that for any fixed $F$, a random unipotent polynomially growing automorphism $\Phi$ has cubulated mapping torus.

 We do not work directly with relative train tracks, but rely on them via the cyclic hierarchy from \cite{Macura:detour} in the superlinear case and the splitting over $\integers^2$ subgroups from \cite{AndrewMartino:splitting,DahmaniTouikan} in the linear case.  Our proof relies on cubical small-cancellation theory to obtain free actions on CAT(0) cube complexes for groups admitting suitable acylindrical cyclic hierarchies whose bottom-level vertex groups are cubulated; this technical result is of independent interest. 
\end{abstract}

\tableofcontents

\section{Introduction}\label{sec:intro}
In this paper, we study free actions of groups $G=F\rtimes_\Phi\integers$ on CAT(0) cube complexes, where $F$ is a finite-rank free group.  Previously, we showed that $G$ admits a proper cocompact action on a CAT(0) cube complex in the case when $\Phi$ is atoroidal \cite{HagenWise:irred,HagenWise:general}, and this result has been generalised, with a streamlined proof, by Dahmani-Krishna MS \cite{DahmaniKrishna:irred} and Dahmani-Krishna MS-Mutanguha \cite{DahmaniKrishnaMutanguha}.  The natural question is which polynomially-growing $\Phi$ have cubulated mapping tori.

A group is \emph{cubulated} if it acts freely on a CAT(0) cube complex.  We do not require the cube complex to be locally finite or finite-dimensional, or the action to be metrically proper.

Mapping tori of polynomially-growing automorphisms are natural from this viewpoint because, for any $\Phi\in\Out(F)$, the mapping torus $G$ is hyperbolic relative to a collection of sub-mapping tori of polynomially growing automorphisms of free groups \cite{DahmaniLi}.  One might imagine cubulating the peripheral subgroups and then combining this with techniques similar to those from \cite{DahmaniKrishna:irred,DahmaniKrishnaMutanguha} to cubulate the whole group.

However, there are limits on how nice the resulting cube complexes and actions can be: when $\Phi$ has polynomial growth, $G$ need not act freely on a finite-dimensional cube complex, as can be seen from Gersten's example, discussed below.  Although the existence of a free action of $G$ on an arbitrary CAT(0) cube complex does have interesting consequences like a-T-menability, we do not expect such actions to reveal deep properties of $G$. But the question of when free actions exist is intriguing and studying it highlights some mysterious features of these groups.

Gersten's group shows that cubulating mapping tori of linear growth automorphisms is an unforgiving task.  In fact, the difficulties with cubulating always reside in the linear-growth mapping tori inside $G$: the main thrust of this paper is that there is a robust method to cubulate $G$ if one can cubulate the linear sub-mapping tori in a strategic fashion.  But the methods we have found for the linear case are more ad hoc.  We suspect that all linear mapping tori act freely on CAT(0) cube complexes.  However, we foresee difficulties in moving from linear to quadratic mapping tori in general.  Indeed, in Section \ref{subsec:intro-counterexample-question}, we discuss a linear mapping torus $G_0$ whose  cubulations lack the property our methodology requires to cubulate quadratic mapping tori containing $G_0$. A general approach to the superlinear case thus demands new techniques.

\subsection{The superlinear case}\label{subsec:intro-results}
Our main result for $\Phi$ with arbitrary polynomial growth rate is:

\Restate{Theorem}{thm:main}{}{Let $F$ be a finite-rank free group, let $\Phi\in\Out(F)$ be a polynomially-growing outer automorphism, and let $G=F\rtimes_\Phi\integers$.  Suppose that both of the following hold:
\begin{itemize}
    \item $G$ is \emph{unbranched} (Definition \ref{defn:unbranched}), and
    \item bottom-level vertex groups in the \emph{superlinear hierarchy} (Remark \ref{rem:topmost-edge-vertex-groups}) are virtually \emph{tubular}.
\end{itemize}
Then $G$ acts freely on a CAT(0) cube complex.}

The \emph{superlinear (virtual) hierarchy} is described in Section \ref{subsec:topmost-edge-splittings}, and originates in work of Macura \cite{Macura:detour}, using improved relative train track representatives of a power of $\Phi$.  After passing to a finite-index subgroup, $G$ admits a hierarchy in which all of the edge groups are infinite cyclic, all of the splittings are acylindrical, and the bottom-level vertex groups have the form $F'\rtimes_{\Phi'}\integers$, where $\Phi'\in \Out(F')$ has growth rate $0$ or $1$.

A group is \emph{tubular} if it splits as a finite graph of groups with $\integers^2$ vertex groups and $\integers$ edge groups.  Cubulated tubular groups were classified in \cite{Wise:tubular}, and this topic was further explored in \cite{Woodhouse:fin-dim,Woodhouse:special}.  In \cite{Button:tubular-cubes}, Button succinctly characterised tubular free-by-$\integers$ groups and used \cite{Wise:tubular} to cubulate them.  Button's result does not follow from Theorem \ref{thm:main} --- tubular free-by-$\integers$ groups need not be unbranched --- but Proposition \ref{prop:general-linear} does recover Button's work.

A group is \emph{unbranched} if it does not have three incommensurable highest  (free)$\times\integers$ subgroups whose mutual intersection contains $\integers^2$; see Definition \ref{defn:unbranched}.  A motivating example is the fundamental group of a graph manifold; being unbranched corresponds to the fact that each JSJ torus is contained in two Seifert fibered pieces. 

This notion comes from \cite{BGGH}, where various coarse-geometric properties of a free-by-cyclic group $G$ are shown to be equivalent to $G$ being unbranched.  For instance, $G$ is unbranched exactly when it is quasi-isometric to a CAT(0) cube complex; in general $G$ does not act on this cube complex.  If $G$ is not unbranched, then, by work of Munro-Petyt \cite{MunroPetyt}, $G$ is not \emph{cocompactly} cubulated, nor coarse median \cite{BGGH,MunroPetyt}, but this does not obstruct cubulation.

\begin{remi}\label{remi:unbranched}
The unbranched property for $G=F\rtimes_\Phi\integers$ holds if and only if it holds for the linear mapping tori at the bottom of the superlinear hierarchy \cite{BGGH}, as a consequence of acylindricity of the hierarchy.  When $\Phi$ is linearly-growing, $G$ being unbranched is determined by the underlying bipartite graph of the canonical splitting of $G$ over $\integers^2$ subgroups constructed in \cite{AndrewMartino:splitting,DahmaniTouikan} (see Proposition \ref{prop:split-over-tori}), and we work with this characterisation of unbranched.
\end{remi}

Theorem \ref{thm:main} has an attractive special case:

\Restate{Corollary}{cor:fast-cubulated}{(Fast cubulation).}{If $\Phi\in\Out(F)$ has polynomial growth and is \emph{fast}, then $G=F\rtimes_\Phi\integers$ acts freely on a CAT(0) cube complex.}

The automorphism $\Phi$ is \emph{fast} if it has polynomial growth rate $d=\rank(F)-1$, which implies that $G$ is unbranched and has tubular bottom-level vertex groups (Lemma \ref{lem:fast-tubular}).

Using relative train track maps, one can define a random UPG automorphism (see Section \ref{subsection:genericity}).  Specifically, let $F=F_r$ with $r\geq 1$.  We then consider homotopy equivalences $f:\Gamma\to\Gamma$, where $\Gamma$ is a rank-$r$ graph and $f$ is an improved relative train track map as provided by \cite[Thm. 5.1.8]{BFH:tits-I}.  The number of edges in $\Gamma$ is bounded in terms of $r$ \cite{BFH:kolchin}, so there are finitely many possibilities for $\Gamma$.  Roughly, $f$ is determined by some immersed closed paths --- \emph{suffixes} --- in $\Gamma$. For each $L\geq 0$, let $R_r(L)$ denote the set of improved relative train track maps $f$ whose suffixes have length at most $L$. Let $F_{\mathcal P}(L)\subset R_r(L)$ be the subset satisfying some property $\mathcal P$ of an improved relative train track map.  Then $\mathcal P$ holds \emph{asymptotically almost surely for UPG automorphisms of $F$} if $|F_{\mathcal P}(L)|/|R_r(L)|\to 1$ as $L\to\infty$. Proposition \ref{prop:fast-generic} and Corollary \ref{cor:fast-cubulated} yield:

\begin{cor}[\textbf{Cubulation is generic}]\label{cori:generic}
A UPG automorphism $\Phi\in\Out(F_r)$ is asymptotically almost surely fast, and hence $G=F\rtimes_\Phi\integers$ is asymptotically almost surely cubulated.
\end{cor}

We now discuss the proof of Theorem \ref{thm:main}.

\subsection{Cubulation using acylindrical cyclic hierarchies}\label{subsec:acyl-cyclic-intro}
Theorem \ref{thm:main} is proved by induction on the length of the superlinear hierarchy.  This hierarchy terminates in mapping tori of linear-growth automorphisms.  We discuss the linear case later in the introduction. We now address the inductive step in the proof of Theorem \ref{thm:main}, so we assume the growth rate is at least $2$.

The inductive step relies on the following statement, which may be useful for cubulating other groups.  We have rephrased it for readability; refer to Section \ref{subsec:acyl-cyclic} for the precise statement.

\Restate{Corollary}{cor:cyclic-splitting-turns}{(Simplified version)}{Let  $G$ split as a finite graph of groups whose edge groups are maximal cyclic subgroups of $G$ and are separable in $G$. Suppose $G$ acts $2$--acylindrically on the Bass-Serre tree $T$.  Suppose each vertex group $G_v$ acts freely on a CAT(0) cube complex $\widetilde X_v$.  Let $\{t_e\}_{e}\subset G_v$ be generators of the edge groups incident to $v$.  Suppose that:
\begin{itemize}
    \item The set $\{t_e\}_e$ is \emph{wall-independent} for the $G_v$--action on $\widetilde X_v$.

    \item There exists $L$ such that for all vertex groups $G_v$ and all $t_e\in G_v$, the combinatorial translation length of $t_e$ on $\widetilde X_v$ is $L$.
\end{itemize}

Let $\mathcal Q\subset G$ be any finite set of $T$--hyperbolic elements such that each $q\in \mathcal Q$ generates a maximal cyclic subgroup of $G$.  Then $G$ acts freely on a CAT(0) cube complex $\widetilde X$ and, moreover:
\begin{enumerate}[(a)]
    \item There exists $L'$ such that each $q\in\mathcal Q$, and each $t_e$, has translation length $L'$ on $\widetilde X$.
    \item The set of all $t_e$, together with $\mathcal Q$, is wall-independent for the $G$--action on $\widetilde X$.
\end{enumerate}}

For a $G$--action on a CAT(0) cube complex $\widetilde X$, a set $\mathcal S\subset G$ is \emph{wall-independent} (Definition \ref{defn:wall-independent}) if distinct conjugates of elements of $\mathcal S$ are cut by only finitely many common hyperplanes.  

Corollary \ref{cor:cyclic-splitting-turns} hypothesises a graph of cubulated groups satisfying two assumptions on a certain finite set of elements, and yields an action of the total group on a cube complex satisfying the same two properties for a larger set.  So Corollary \ref{cor:cyclic-splitting-turns}  applies inductively in the setting of a group with an acylindrical cyclic hierarchy, as in the proof of Theorem \ref{thm:main}, where $G$ is a superlinear polynomial mapping torus, and the graph of groups is the superlinear hierarchy from \cite{Macura:detour}.

The proof proceeds as follows.  Using the hypothesis on translation lengths, Proposition \ref{prop:joining-walls} provides a $G$--action on a CAT(0) cube complex $\widetilde X'$ that admits a cubical quotient map to $T$ such that the vertex spaces contain translates of the various $\widetilde X_v$ as isometrically embedded subcomplexes.  Inside $\widetilde X'$, each $q\in\mathcal Q$ stabilises a convex subcomplex $\widetilde Y_q$ splitting as a line of spaces whose vertex spaces are vertex spaces of $\widetilde X'$.  Elements of $\mathcal Q\cup\{t_e\}_e$ may have different translation lengths in $\widetilde X'$ and fail to be wall-independent, so we will modify $\widetilde X'$.

A hyperplane of $\widetilde X'$ has  two possible behaviours.  Some hyperplanes lie inside some $\widetilde X_v$, or, more generally, project to bounded sets in $T$.  Such hyperplanes are useful because acylindricity of the $G$--action on $T$ bounds the overlaps in $T$ between the images of distinct $\widetilde Y_q$ and $\widetilde Y_{q'}$.  Hence only finitely many such ``benign'' hyperplanes can cross both $\widetilde Y_q$ and $\widetilde Y_{q'}$.  More problematic are hyperplanes that cross $\widetilde Y_{q'}$ in infinitely many vertex spaces.  When $\widetilde X'$ has no such hyperplanes, we say that the hyperplanes in $\widetilde X'$ \emph{exit} $\mathcal Q$.  Producing exiting hyperplanes is the heart of the difficulty with the base case.

In the setting of Corollary \ref{cor:cyclic-splitting-turns}, we use wall-independence hypothesis and acylindricity to refine the construction of $\widetilde X'$ so that all hyperplanes exit $\mathcal Q$.  This is done by modifying the walls using large powers of Dehn twists in the edge-cylinders, and recubulating.  


We then obtain constant translation length and wall-independence using Theorem \ref{thm:cubulation-with-turns}, which also recubulates a graph of groups. Theorem \ref{thm:cubulation-with-turns} allows more general edge groups and arbitrary acylindricity constants but requires the input cubulation of $G$ to have walls exiting $\mathcal Q$.  Its proof uses a cubical presentation for $G$, whose underlying cube complex is $X'=G\backslash\widetilde X'$ and whose relators are the local isometries $\widetilde Y_q\hookrightarrow\widetilde X'\to X'$.  Actually, $\pi_1(\langle X'\mid \{\widetilde Y_q\}\rangle)=G$ since the $\widetilde Y_q$ are simply connected, and simple connectivity also makes this a cubical small-cancellation presentation in the sense of \cite{Wise:QCH}.  This trick allows us to create new walls in $\widetilde X'$, using the template from \cite[Sec. 5]{Wise:QCH}, which reduces the problem to declaring new walls in $\widetilde X'$ by choosing walls in each $\widetilde Y_q$.  The new walls in $\widetilde Y_q$ are declared to be certain pairs of exiting hyperplanes that are far apart in $T$.  In the new wallspace on $\widetilde X'$, these two hyperplanes cutting $q$ are absorbed into a single wall that fails to cut $q$.  This allows us to do two things: ensure that walls cutting various $t_e$ or $q'\in\mathcal Q$ do not cut $q$, and modify the translation length of $q$ without affecting the translation lengths of any $t_e$ or $q'$.  This produces  $\widetilde X$ with the required properties.

In Theorem \ref{thm:main}, the unbranching and tubularity hypotheses are dictated by the base case.  The base case is where $G$ is a linear mapping torus, which has a different acylindrical splitting as a graph of groups with product vertex groups and $\integers^2$ edge groups.  We need to cubulate $G$ so that the hyperplanes  exit the generators of edge groups at the next stage of the hierarchy.  This enables Theorem \ref{thm:cubulation-with-turns}, which equalises translation lengths and provides wall-independence, so that we can use Corollary \ref{cor:cyclic-splitting-turns} at the next stage.  The linear case requires a different approach, described in the next section.

We conclude our discussion of the superlinear case with some questions.

\begin{question}\label{question:general-cocompactness}
In Corollary \ref{cor:cyclic-splitting-turns}, suppose in addition that each of the actions $G_v\actson\widetilde X_v$ is cocompact and the edge groups are cubically convex-cocompact in the vertex groups.  Is there a free, cocompact action of (a finite index subgroup of) $G$ on a CAT(0) cube complex $\widetilde X$ satisfying the conclusion of the statement?  
\end{question}

One can imagine variants of Question \ref{question:general-cocompactness} where cocompactness is replaced by, for instance, finite-dimensionality and/or local finiteness of $\widetilde X_v$ and $\widetilde X$.  This leads to:

\begin{question}\label{question:proper}
For which $\Phi\in\Out(F)$ does $G=F\rtimes_\Phi\integers$ act metrically properly on a CAT(0) cube complex? For which $\Phi$ is there a free action on a proper CAT(0) cube complex?  For which polynomially growing $\Phi$ does $G$ act freely on a finite dimensional CAT(0) cube complex?  When is the action cocompact?  Is $G$ (virtually) cocompactly cubulated whenever $\Phi$ is fast?
\end{question}

One might hope for results analogous to the characterisation of cocompactly cubulated tubular groups in \cite{Wise:tubular} and the characterisations of tubular groups admitting finite-dimensional and/or locally finite cubulations in \cite{Woodhouse:fin-dim,Woodhouse:special}.

The cubulation arguments in the linear case rely on Lemma \ref{lem:small-cancellation-improper} and therefore produce infinite-dimensional cube complexes.  However, in the fast case, it is easy to make a cocompact action (with exits) using Remark \ref{rem:cocompactness-in-the-fast-case}.  We believe the answer to the fast part of Question \ref{question:general-cocompactness}, and Remark \ref{rem:cocompactness} sketches a strategy for modifying the proof of Theorem \ref{thm:main} to yield a cocompact action when $\Phi$ is fast.

\subsection{The linear case}\label{subsec:linear-intro}
Gersten's group,
$$G=\langle a,b,c,t\mid a^t=a,\ b^t=ba,\ c^t=ca^2 \rangle,$$
was shown to act freely on a CAT(0) cube complex in \cite{Wise:tubular}, using the tubular structure of $G$.  This example famously admits no proper semisimple action on a CAT(0) space \cite{Gersten}, and therefore shows that one cannot expect a finite-dimensional cubulation.  In fact, recent work of Munro-Petyt \cite{MunroPetyt} shows that $G$ is not quasi-isometric to a CAT(0) cube complex and actually has no coarse median structure.  This is an artifact of $G$ failing to be unbranched \cite{BGGH}.  So, even in the linear case, the best we can hope for is a free action.

On the positive side, Button showed that all tubular free-by-cyclic groups act freely on CAT(0) cube complexes \cite{Button:tubular-cubes}; Button uses the free-by-cyclic structure to find \emph{equitable sets}, yielding a free cubical action by \cite{Wise:tubular}.  There is thus a large class cubulated mapping tori in the linear case.  Indeed, one-ended tubular groups are \emph{thick of order $\le 1$} in the sense of \cite{BDM}, while mapping tori of superlinear-growth automorphisms are not \cite{Hagen:thickness,Macura:detour}.

After passing to a finite-index subgroup, any linear mapping torus $G$ arises from an automorphism preserving a splitting of $F$ over infinite cyclic subgroups.  So, up to finite index, $G$ has an acylindrical splitting as a finite graph of groups with $\integers^2$ edge groups and $F_i\times \integers$ vertex groups \cite{AndrewMartino:splitting,DahmaniTouikan}.  We call this the \emph{torus splitting} of $G$.

The torus splitting generalises the JSJ decomposition of a $3$--dimensional graph manifold arising as the mapping torus of a multitwist on a punctured surface, and our goal is to build walls similar to those employed in \cite{PrzytyckiWise} for graph manifolds. However, there are are significant differences.  For instance, graph manifold groups are unbranched, while in $G$, there may be a single $\integers^2$ contained in more than two vertex groups. When $G$ fails to be unbranched, we do not know how to cubulate $G$ so that the hyperplanes exit a given finite collection of elements.  This is why Theorem \ref{thm:main} assumes that $G$ is unbranched.  In Section \ref{subsec:intro-counterexample-question} we speculate about whether this phenomenon means there are non-cubulated quadratic  mapping tori.

Before explaining the conditions under which we are able to cubulate $G$ with exits, we discuss the question of when $G$ is cubulated at all.  As mentioned earlier, Button has cubulated all tubular free-by-$\integers$ groups, and our first statement about the linear case generalises his result. 

\Restate{Proposition}{prop:general-linear}{(Linear cubulation).}{Let $\Phi\in\Out(F)$ be linearly-growing and suppose the torus splitting of $G=F\rtimes_\Phi\integers$ has \emph{good edge measurements}.  Then $G$ is cubulated.}

To explain what \emph{good edge-measurements} are, it is best to outline the proof of the proposition.  First, since one can always use induced actions on finite products to pass from a virtual cubulation to a cubulation, we use standard tricks to pass to a finite index subgroup of $G$ and assume that the torus splitting of $G$ has  additional properties. Most importantly, $G$ is $\pi_1$ of a graph of spaces $\Delta$ with bipartition $\vertices(\Delta)=\vertices_\circ(\Delta)\sqcup\vertices_\bullet(\Delta)$.  Each $b\in \vertices_\bullet(\Delta)$ has vertex space is $r_b\times t_b$, where $r_b,t_b$ are circles.  Each $w\in\vertices_\circ(\Delta)$ has vertex space is $X_w\times t_w$, where $X_w$ is a graph and $t_w$ is a circle.  Edges have the form $(b,w)\in\vertices_\bullet(\Delta)\times\vertices_\circ(\Delta)$ and the  $(b,w)$ edge space is a copy of $r_b\times t_b$ attached to the vertex space $r_b\times t_b$ using the identity map, and to $X_w\times t_w$ using a map that sends $r_b$ to an \emph{embedded} cycle $p_b\subset X_w$ and that sends $t_b$ to the path $p_bt_w^k$ for some $k\in\integers$.  Moreover, the cycles $p_b$ in $X_w$ are distinct for distinct $b$ incident to $w$, which ensures the splitting is acylindrical.  The group $G$ is unbranched if and only if all $b$ have valence $2$.

By Proposition \ref{prop:joining-walls}, to cubulate $G$, it suffices to find immersed walls in each $X_w\times t_w$ where:
\begin{itemize}
    \item the resulting actions of the $\pi_1(X_w\times t_w)$ on cube complexes are free, and
    \item if $w,w'$ are adjacent to a common $b$, then the cubulations of $\pi_1(X_w\times t_w)$ and $\pi_1(X_{w'}\times t_{w'})$ induce the same cubulation on the common $\integers^2$ subgroup $\pi_1(r_b\times t_b)$.  
\end{itemize}
It actually suffices to produce \textbf{non-free} actions of the vertex groups on cube complexes that restrict to \textbf{free} actions of the edge groups satisfying the second bullet point, since Lemma \ref{lem:small-cancellation-improper} promotes these actions to free actions of the vertex groups without affecting the cubulations of the edge groups.  Our cube complexes are infinite-dimensional because of Lemma \ref{lem:small-cancellation-improper}.

Immersed walls in $X_w\times t_w$ are of two types: \emph{horizontal} walls correspond to kernels of virtual homomorphisms to $\integers$, and \emph{vertical} walls that do not cut the element represented by $t_w$.

The idea of the proof of Proposition \ref{prop:general-linear} is to construct a family of horizontal walls in $X_w\times t_w$.  These intersect each embedded torus $p_b\times t_w$ in a collection of circles.  The goal is to choose the walls according to prescribed circles in the various tori $X_w\times t_w$, to ensure that the induced cubulations of the edge-tori match those induced by the other vertex group cubulations.  We have been unable to do this for all linear mapping tori, but expect that extending Proposition \ref{prop:general-linear} will involve this strategy, and ask:

\begin{question}\label{question:linear}
Is the mapping torus of every linearly-growing element of $\Out(F)$ cubulated?
\end{question}

The \emph{good measurements} assumption enables the above strategy.  First, building on a canonical completion argument from \cite{Wise:omnipotence}, we pass to a further finite cover and assume that the set $\mathbf E(w)$ of cycles $p_b$ in $X_w$ is partitioned into subsets $\mathbf E_i(w)$ where, for each $i$, there is an \emph{edge measurement} homomorphism $\pi_i:\homology_1(X_w)\to \integers$ that takes nonzero values on elements of $\mathbf E_i(w)$ and vanishes on elements of $\mathbf E(w)-\mathbf E_i(w)$. \emph{Good edge measurements} asserts that $\{\pi_i\}_i$ can be chosen so that $\pi_i(p_b)=\pi_i(p_{b'})$ for all $p_b,p_{b'}\in\mathbf E_i(w)$.  This is satisfied, for instance, when $\pi_1X_w$ has a free product decomposition with all of the $p_b$ belonging to distinct free factors. Every tubular $G$ has good edge measurements, by Proposition \ref{prop:tubular-cubulation}, but Example \ref{exmp:good-measurements-non-tubular} is non-tubular.  Good edge measurements provide  homomorphisms $\pi_1(X_w\times t_w)\to \integers$ whose kernels give the vertex group cubulations that are then spliced together, using Proposition \ref{prop:joining-walls}, to cubulate $G$.  See Remark \ref{rem:edge-measure} for more on good edge-measurements.  A variant of this strategy gives the following, which serves as the base case in the inductive proof of Theorem \ref{thm:main}. 

\Restate{Proposition}{prop:exit-arrangement}{(Linear cubulation with exits).}{Let $\Phi$ be linearly growing and suppose that the torus splitting of $G=F\rtimes_\Phi\integers$ has good edge measurements, and $G$ is unbranched.  Then $G$ acts freely on a CAT(0) cube complex.  Moreover, if $\mathcal Q\subset G$ is a finite set of elements that are hyperbolic in the torus splitting, then there is a cubulation of $G$ where the hyperplanes exit $\mathcal Q$.}

We hope that others will find our analysis of the linear case to be a useful starting point for resolving Question \ref{question:linear}.  We wonder if ideas in \cite{MerladetMinasyan,Nguyen:separability,WuYe} can be applied.  We discuss other approaches in Section \ref{subsec:intro-counterexample-question}, where we present an example of a tubular linear mapping torus $G_0$ showing that Proposition \ref{prop:exit-arrangement} fails without unbranching, and an important test case is identified in Problem \ref{prob:super-dehn}.

\subsection{Outline}\label{subsec:outline}
Section \ref{sec:cubulation-lemmas} contains background on cube complexes, cubical small-cancellation theory, and other notions.  In Section \ref{sec:joining-walls}, we cubulate certain graphs of groups.  Section \ref{sec:mapping-tori-background} is background on splittings of mapping tori of UPG automorphisms.  Section \ref{sec:sufficient-conditions-linear} deals with linear mapping tori.  Section \ref{subsec:acyl-cyclic} contains the small-cancellation argument for turns, yielding Corollary \ref{cor:cyclic-splitting-turns}.  The ingredients are assembled in Section \ref{sec:conclusion}, which proves Theorem \ref{thm:main} and Corollary \ref{cori:generic}.

\medskip
\paragraph{\textbf{Acknowledgments}}\label{subsec:acknowl}
We thank Monika Kudlinska for telling us about \cite{AndrewMartino:splitting} and   Harry Petyt for discussions of \cite{MunroPetyt}.  We are very grateful to Naomi Andrew for explaining the $\integers^2$ splitting of linear mapping tori and for suggesting the appropriate notion of genericity among UPGs.  We thank Armando Martino for many helpful comments. We also thank referees of previous versions for useful feedback.

\section{Background}\label{sec:cubulation-lemmas}
We refer the reader to \cite[Sec. 2]{Wise:QCH} and \cite{Haglund:semisimple} for background on CAT(0) cube complexes.  

\subsection{Translation length in cube complexes}\label{subsec:cubical-translation}
Let $C$ be a CAT(0) cube complex.  Then $g\in\Aut(C)$ is \emph{(combinatorially) elliptic} if $g$ fixes a $0$--cube and \emph{(combinatorially) hyperbolic} if there is a bi-infinite combinatorial geodesic $\widetilde A$ in $C$, and 
$\tau\in\integers$, such that, regarding $\widetilde A$ as an isometric embedding $\widetilde A:\reals\to C^1$, we have $g\widetilde A(t)=\widetilde A(t+\tau)$ for all $t\in\reals$.  By Corollary~5.2 of~\cite{Haglund:semisimple}, the \emph{translation length} $\tau$ is independent of  $\widetilde A$; $\tau$ is the minimal distance by which $g$ moves a $0$--cube. By replacing $C$ by a \emph{cubical subdivision} -- i.e. by subdividing $C$ so that the hyperplanes become subcomplexes and each hyperplane is replaced by $2$ parallel hyperplanes -- we can assume that any group $G\leq\Aut(C)$ has the property that each $g\in G$ is either elliptic or hyperbolic~\cite{Haglund:semisimple}.  Indeed, it is shown in~\cite{Haglund:semisimple} that the action of $G$ on $C$ has this property provided $G$ acts \emph{without inversions in hyperplanes} in the sense that whenever $g\in G$ stabilizes a hyperplane, it stabilizes each of the associated halfspaces.  Passing to the cubical subdivision of $C$ ensures that the action is without inversions.

\begin{notation}[Translation length]\label{notation:trans_length}
Let $C$ be a CAT(0) cube complex.  If $g\in\Aut(C)$ is elliptic, $\|g\|_C=0$.  If $g\in\Aut(C)$ has axis $\tilde A$ in $C$, then $\|g\|_C$ denotes the number of $\langle g\rangle$--orbits of $1$--cubes in $\tilde A$.  If $G$ acts on $C$ without inversions, then $\|g\|_C$ is well-defined for all $g\in G$. We sometimes use the following convention on subscripts: if $g\in\Aut(C_\heartsuit)$, then $\|g\|_\heartsuit$ means $\|g\|_{C_\heartsuit}$, etc.
\end{notation}

\begin{obs}\label{obs:additive}
Suppose that $C_\heartsuit$ and $C_\diamondsuit$ are CAT(0) cube complexes, and $G$ acts on $C_\heartsuit$ and $C_\diamondsuit$.  Let $C=C_\heartsuit\times C_\diamondsuit$. Then, with respect to the diagonal action $G\to\Aut(C)$, $$\|g\|_C=\|g\|_\heartsuit+\|g\|_\diamondsuit$$ for all $g\in G$.  Moreover, $\|g^d\|_\heartsuit=|d|\|g\|_\heartsuit$ for all $g\in G$ and $d\in\integers$.
\end{obs}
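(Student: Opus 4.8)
The plan is to reduce the statement to two elementary facts: that the $1$--skeleton of a product of CAT(0) cube complexes carries the additive metric, and that the translation length of a semisimple isometry can be read off from a single combinatorial axis. I would argue by cases on the behaviour of $g$ on the two factors. If $g$ is elliptic on both $C_\heartsuit$ and $C_\diamondsuit$, say fixing $0$--cubes $x_\heartsuit$ and $x_\diamondsuit$, then $g$ fixes $(x_\heartsuit,x_\diamondsuit)\in C$, so $\|g\|_C=0=\|g\|_\heartsuit+\|g\|_\diamondsuit$ and there is nothing more to do. In each remaining case I will instead exhibit an explicit $g$--invariant combinatorial geodesic in $C$, which simultaneously shows that $g$ is hyperbolic on $C$ and computes $\|g\|_C$; so I never need to separately invoke semisimplicity of the product action.

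For the main construction, suppose $g$ is hyperbolic on $C_\heartsuit$, with combinatorial axis $\widetilde A_\heartsuit$ along which $g$ translates by $\tau_\heartsuit=\|g\|_\heartsuit>0$. If $g$ is also hyperbolic on $C_\diamondsuit$, let $\widetilde A_\diamondsuit$ be an axis with translation $\tau_\diamondsuit=\|g\|_\diamondsuit$; if $g$ is elliptic on $C_\diamondsuit$, put $\widetilde A_\diamondsuit=\{w_0\}$ for a fixed $0$--cube $w_0$ and $\tau_\diamondsuit=0$. Since $g$ stabilizes each factor axis, the product $\widetilde A_\heartsuit\times\widetilde A_\diamondsuit$ is a $g$--invariant subcomplex of $C$ isomorphic to the standard unit-square tiling of $\reals^2$ (or of $\reals$), on which $g$ acts as the translation carrying $(0,0)$ to $(\tau_\heartsuit,\tau_\diamondsuit)$, a vector with non-negative coordinates. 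Inside this flat I would build a bi-infinite monotone ``staircase'' edge-path $P$ by choosing any monotone edge-path from $(0,0)$ to $(\tau_\heartsuit,\tau_\diamondsuit)$ in the grid and propagating it by the powers of $g$ along the diagonal; then $P$ is $g$--invariant with exactly $\tau_\heartsuit+\tau_\diamondsuit$ edges per $\langle g\rangle$--orbit. The point to verify is that $P$ is a combinatorial geodesic of $C$ and not merely of the flat: since the $1$--skeleton metric of $C=C_\heartsuit\times C_\diamondsuit$ satisfies $\dist_C((a,b),(a',b'))=\dist_\heartsuit(a,a')+\dist_\diamondsuit(b,b')$, and $\widetilde A_\heartsuit$, $\widetilde A_\diamondsuit$ are geodesics in their factors, every finite monotone subpath of $P$ has length equal to the $C$--distance between its endpoints. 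Hence $P$ is an axis for $g$ in $C$, so $g$ is hyperbolic there and, the action being without inversions, $\|g\|_C$ is well-defined (Notation~\ref{notation:trans_length}) and equals the number of $\langle g\rangle$--orbits of edges of $P$, namely $\tau_\heartsuit+\tau_\diamondsuit=\|g\|_\heartsuit+\|g\|_\diamondsuit$.

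For the ``moreover'', $\|g^d\|_\heartsuit=|d|\,\|g\|_\heartsuit$, I would work directly in $C_\heartsuit$. The elliptic case is immediate. If $g$ is hyperbolic with axis $\widetilde A_\heartsuit$ and translation $\tau=\|g\|_\heartsuit>0$, then for $d>0$ the element $g^d$ stabilizes the same axis and translates by $d\tau$, hence has $d\tau$ edge-orbits, so $\|g^d\|_\heartsuit=d\tau$; also $g^{-1}$ stabilizes $\widetilde A_\heartsuit$ translating by $-\tau$, hence still has $\tau$ edge-orbits, and applying the $d>0$ statement to $g^{-1}$ gives $\|g^d\|_\heartsuit=|d|\tau$ for $d<0$, while $d=0$ is trivial.

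The only step here that is not routine bookkeeping is the verification that the staircase $P$ is geodesic in the ambient complex, and its substance is the additivity of the $1$--skeleton metric of a product of cube complexes. This is standard, but it is the crux of the argument; everything else is counting edge-orbits of a translation.
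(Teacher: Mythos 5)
Your argument is correct: the paper states this as an Observation and supplies no proof, and what you give is precisely the routine verification being left to the reader, namely that the $1$--skeleton metric of $C_\heartsuit\times C_\diamondsuit$ is the sum of the factor metrics, so a monotone staircase in $\widetilde A_\heartsuit\times\widetilde A_\diamondsuit$ is a $g$--invariant combinatorial geodesic with $\|g\|_\heartsuit+\|g\|_\diamondsuit$ edge-orbits, together with the edge-orbit count for powers along a common axis. The only cosmetic omission is the case where $g$ is elliptic on $C_\heartsuit$ and hyperbolic on $C_\diamondsuit$, which is symmetric to the case you treat.
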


\subsection{Cutting}\label{subsec:cut_wall}
We now recall a special case of \emph{wallspaces}  in~\cite{HaglundPaulin:wallspace}; see~\cite{HruskaWise:finiteness} for more details. A \emph{wallspace} $(\widetilde X,\mathcal W)$ is a metric space $\widetilde X$ with a set $\mathcal W$ of connected subspaces $\widetilde W$, called \emph{walls}, such that $\widetilde X-\widetilde W$ has exactly two components, called \emph{halfspaces}, and such that for all $x,y\in\widetilde X$, there are finitely many $\widetilde W\in\mathcal W$ for which $x,y$ lie in different components of $\widetilde X-\widetilde W$ (in which case $\widetilde W$ \emph{separates} $x,y$).  An \emph{automorphism} of $(\widetilde X,\mathcal W)$ is an isometry $g:\widetilde X\to\widetilde X$ so that $g\widetilde W\in\mathcal W$ for $\widetilde W\in\mathcal W$; the group of automorphisms of $(\widetilde X,\mathcal W)$ is denoted $\Aut(\widetilde X,\mathcal W)$.

\begin{defn}[Cutting, parallel]\label{defn:cutting}
Let $(\widetilde X,\mathcal W)$ be a wallspace and let $g\in\Aut(\widetilde X,\mathcal W)$.  Let $A:\reals\to\widetilde X$ be an embedding with $g$--invariant image so that $g\circ A:\reals\to\widetilde X$ is increasing.  Then $\widetilde W\in\mathcal W$ \emph{cuts} $g$ if there exists $x\in\reals$ so that $A((x,\infty))$ and $A((-\infty,x))$ lie in different halfspaces associated to $\widetilde W$. If no such $A$ exists, then the set of walls cutting $g$ is empty. We call $g,h\in\Aut(\widetilde X,\mathcal W)$ \emph{parallel} if: each $W\in\mathcal W$ cuts $g$ if and only if $W$ cuts $h$.
\end{defn}

The CAT(0) cube complex $C=C(\widetilde X,\mathcal W)$ \emph{dual to} the wallspace $(\widetilde X,\mathcal W)$ was defined in~\cite{Sageev:cubes}.  Denoting by $\widehat{\mathcal W}$ the set of halfspaces, the $0$--cubes of $C$ are maps $c:\mathcal W\to\widehat{\mathcal W}$ sending each wall to one of the two associated halfspaces, subject to: 
\begin{itemize}
    \item $c(\widetilde W)\cap c(\widetilde W')\neq\emptyset$ for $\widetilde W,\widetilde W'\in\mathcal W$, and
    \item $|\{\widetilde W:x\not\in c(\widetilde W)\}|<\infty$  for all $x\in\widetilde X$.
\end{itemize}
The $0$--cubes $c,c'$ are joined by a $1$--cube if and only if $|\{\widetilde 
 W:c(\widetilde W)\neq c'(\widetilde W)\}|=1$, and higher-dimensional cubes are added when their $1$--skeleta appear.

If $G$ acts on $(\widetilde X,\mathcal W)$, there is an induced $G$--action on $C$ defined by: $(gc)(\widetilde W)=g\cdot c(g^{-1}\widetilde W)$ for $g\in G$, $c\in C^0$, and $\widetilde W\in\mathcal W$.  This action takes hyperplanes to hyperplanes, and there is a $G$--equivariant bijection from $\mathcal W$ [resp. $\widehat{\mathcal W}$] to the set of hyperplanes [resp. halfspaces] in $C$.

The first of the following lemmas is~\cite[Lemma~2.1]{Wise:tubular}. The second follows by considering hyperplanes intersecting axes in the dual cube complex.

\begin{lem}[Cut-wall criterion]\label{lem:free}
Let $G$ act on a wallspace $(\widetilde X,\mathcal W)$ and suppose that each $g\in G-\{1_G\}$ is cut by some wall in $\mathcal W$.  Then $G$ acts freely on $C(\widetilde X,\mathcal W)$.
\end{lem}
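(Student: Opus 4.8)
The plan is to prove the statement directly by showing that no $g\in G-\{1_G\}$ fixes a $0$--cube of $C:=C(\widetilde X,\mathcal W)$, and then upgrading this to a genuine free action. First I would record that the hypothesis forces $G$ to be torsion-free: if $\widetilde W$ cuts $g$ with respect to a $g$--invariant line $A$, then $\{g^n\widetilde W\cap A:n\in\integers\}=A(y\integers)$ with $y\neq 0$ is infinite, so the walls $g^n\widetilde W$ are pairwise distinct, and in particular $g^n\neq 1_G$ for $n\neq 0$. Consequently, if some $g\neq 1_G$ fixed a point of $C$, it would stabilize the unique cube $\sigma$ whose interior contains that point, permute the finitely many vertices of $\sigma$, and hence some power $h:=g^k$ with $k\geq 1$ would fix every vertex of $\sigma$; torsion-freeness gives $h\neq 1_G$. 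So it suffices to derive a contradiction from: some $h\neq 1_G$ fixes a $0$--cube $v\in C^0$, where (by hypothesis applied to $h$) $h$ is cut by a wall $\widetilde W$.

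The next step is a purely formal manipulation of the cocycle description of $C^0$. Since a $0$--cube is a map $v\colon\mathcal W\to\widehat{\mathcal W}$ and the action is $(hc)(\widetilde W')=h\cdot c(h^{-1}\widetilde W')$, the relation $hv=v$ together with an easy induction yields $v(h^n\widetilde W)=h^n\big(v(\widetilde W)\big)$ for all $n\in\integers$. Writing $\mathfrak h:=v(\widetilde W)$, this reads $v(h^n\widetilde W)=h^n\mathfrak h$, where $h^n\mathfrak h$ is a halfspace of the wall $h^n\widetilde W$.

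The heart of the argument then uses that $\widetilde W$ cuts $h$. Let $A$ be the $h$--invariant line from Definition~\ref{defn:cutting}, parametrized so that the crossing points $h^n\widetilde W\cap A$ occur at strictly monotone parameters (this uses that $h$ has infinite order, so it shifts the discrete $h$--invariant set $A(y\integers)$ along $A$ with no fixed point) and so that $\widetilde W\cap A$ sits at parameter $0$. Since $\widetilde W$ separates the two ends of $A$, the intersection $\mathfrak h\cap A$ is a half-line, and after replacing $\mathfrak h$ by its complement and orienting $A$ appropriately we may take $\mathfrak h\cap A=A((0,\infty))$; then $h^n\mathfrak h\cap A$ is the half-line lying beyond the $n$--th crossing point, which recedes to $+\infty$ as $n\to+\infty$. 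Hence the point $x_0:=A(0)\in\widetilde X$ satisfies $x_0\notin h^n\mathfrak h=v(h^n\widetilde W)$ for all sufficiently large $n$, i.e. there are infinitely many walls $\widetilde W'$ with $x_0\notin v(\widetilde W')$. This contradicts the defining finiteness condition for a $0$--cube applied to the point $x_0$, and the lemma follows.

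I expect the third paragraph to be the one requiring care. The subtleties are that $A$ is only assumed to be a topological embedding (so ``$h$ acts as a translation on $A$'' must be replaced by the weaker but sufficient statement that $h$ has no fixed crossing point, which one extracts from the equality $\{g^n\widetilde W\cap A\}=A(y\integers)$), that the choice of which end of $A$ lies in $\mathfrak h$ must be made coherently across the whole family $\{h^n\widetilde W\}$, and that one wants to place the test point $x_0$ on $A$ itself rather than using an arbitrary point of $\widetilde X$ — this last point being the small trick that keeps the bookkeeping clean. The reduction from fixed points to fixed $0$--cubes, the torsion-freeness observation, and the cocycle identity are all routine.
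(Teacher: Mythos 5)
Your argument is correct, and there is in fact no in-paper proof to compare it to: the paper simply quotes this statement as Lemma~2.1 of \cite{Wise:tubular}, so your write-up supplies the standard argument that the cited source contains. The skeleton is sound: torsion-freeness from the infinitude of $\{h^n\widetilde W\cap A\}=A(y\integers)$, the reduction from a fixed point of the complex to a fixed $0$--cube via a power, the equivariance identity $v(h^n\widetilde W)=h^n\,v(\widetilde W)$, and finally a violation of the finiteness axiom in the definition of a $0$--cube at a point of $A$. Two small points of hygiene in your third paragraph. First, you cannot literally ``replace $\mathfrak h$ by its complement,'' since $\mathfrak h=v(\widetilde W)$ is dictated by the fixed $0$--cube; but this is harmless, because reparametrizing $A$ alone already arranges $\mathfrak h\cap A=A((0,\infty))$ with the crossing point of $\widetilde W$ at parameter $0$. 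Second, the monotonicity of the crossing parameters needs a touch more than ``$h$ has no fixed crossing point'': the clean way is to note that $h$ restricts to an order-preserving or order-reversing bijection of the single infinite orbit $\{h^nA(x)\}=A(y\integers)$, and an order-reversing bijection of a copy of $\integers$ is an involution, forcing that orbit to have at most two points; hence $h$ preserves the order of the crossing points and shifts them by exactly one step, so they are strictly monotone in $n$ (and in particular the walls $h^n\widetilde W$ are pairwise distinct). Finally, since the orientation of $A$ has already been spent on placing $\mathfrak h$, the crossing points may recede to $+\infty$ as $n\to+\infty$ or as $n\to-\infty$; either way $A(0)\notin v(h^n\widetilde W)$ for infinitely many $n$, which is all the contradiction requires.
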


\begin{lem}[Cut-walls and translation length in $C(\widetilde X,\mathcal W)$]\label{lem:cut_wall_trans_length}
Let $G$ act on the wallspace $(\widetilde X,\widetilde W)$ and let $g\in G$.  Let $\|g\|_{\mathcal W}$ be the number of $\langle g\rangle$--orbits of walls that cut $g$.  Then the translation length $\|g\|$ of $g$ on $C$ is defined and $\|g\|=\|g\|_{\mathcal W}$.
\end{lem}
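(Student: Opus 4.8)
The plan is to transfer everything to the dual cube complex $C=C(\widetilde X,\mathcal W)$, using the $G$--equivariant bijection between $\mathcal W$ and the hyperplanes of $C$ (carrying halfspaces to halfspaces) recalled in Section~\ref{subsec:cut_wall}, and then to invoke the standard description of cubical translation length in terms of hyperplanes crossing an axis. First I would observe that the displayed hypothesis says exactly that $g$ acts on $C$ without inversions in hyperplanes: if $g$ stabilizes a hyperplane $\widehat W$, then the corresponding wall $\widetilde W$ satisfies $g\widetilde W=\widetilde W$, so $g$ preserves each halfspace of $\widetilde W$ and hence each halfspace of $\widehat W$. By Haglund's semisimplicity theorem~\cite{Haglund:semisimple}, $g$ is therefore elliptic or hyperbolic on $C$, so $\|g\|$ is defined. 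Recall also the standard fact that when $g$ is hyperbolic with combinatorial axis $\widetilde A\subset C$, a hyperplane crosses $\widetilde A$ (separates its two ends) if and only if it is dual to a $1$--cube of $\widetilde A$, each such hyperplane meets $\widetilde A$ in a single $1$--cube, and consequently $\langle g\rangle$ acts on the set of hyperplanes crossing $\widetilde A$ with a fundamental domain of size $\|g\|$. So the lemma reduces to the dictionary statement: \emph{$\widetilde W\in\mathcal W$ cuts $g$ if and only if the hyperplane $\widehat W$ crosses a combinatorial axis of $g$} (with the convention that no hyperplane crosses an ``axis'' of an elliptic $g$).

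To prove the dictionary I would argue as follows. In the direction ``$\widehat W$ crosses $\widetilde A$ $\Rightarrow$ $\widetilde W$ cuts $g$'': the translates $\{g^n\widehat W\}$ are then pairwise disjoint hyperplanes marching monotonically along $\widetilde A$, and reading this data off $\widetilde A$ produces a $g$--invariant embedded line in $\widetilde X$ along which the translates of $\widetilde W$ separate the two ends cleanly at a discrete $\langle g\rangle$--orbit of points; this line serves as the ``$A$'' in Definition~\ref{defn:cutting}. For the converse, and to get the numerical identity, I would fix the witnessing line $A$ from Definition~\ref{defn:cutting}, choose a point $A(s_0)$ lying on no wall, and let $c$ be the canonical $0$--cube orienting every wall towards the halfspace containing $A(s_0)$, so that $g^mc$ is the canonical $0$--cube of $g^mA(s_0)$ and $d_C(c,g^mc)$ counts the walls separating $A(s_0)$ from $g^mA(s_0)$ in $\widetilde X$. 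Definition~\ref{defn:cutting} then guarantees that each of the $\|g\|_{\mathcal W}$ orbits of walls cutting $g$ contributes exactly $m$ separating walls (the orbit points strictly between $A(s_0)$ and $g^mA(s_0)$), giving $d_C(c,g^mc)\geq m\,\|g\|_{\mathcal W}$; showing that the remaining walls contribute only $O(1)$ yields $d_C(c,g^mc)=m\,\|g\|_{\mathcal W}+O(1)$, and since $\|g\|=\lim_m d_C(c,g^mc)/m$ for any semisimple $g$, we conclude $\|g\|=\|g\|_{\mathcal W}$. (In particular, if some wall cuts $g$ then $d_C(c,g^mc)\to\infty$, which is impossible for elliptic $g$; so the elliptic case is the case $\|g\|_{\mathcal W}=0=\|g\|$, handled automatically.)

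The main obstacle is exactly the bound ``the non-cutting walls contribute only $O(1)$ to $d_C(c,g^mc)$''. A wall $\widetilde W$ failing to cut $g$ does so for one of a few reasons, and each must be controlled: if $\widetilde W$ lies in a finite $\langle g\rangle$--orbit, its translates contribute at most the orbit size, and here the no-inversion hypothesis is what keeps that count finite and uniform; if the translates of $\widetilde W$ are infinite in number but fail the discreteness clause $\{g^n\widetilde W\cap A:n\in\integers\}=A(y\integers)$, one must rule out that infinitely many translates accumulate inside $A((s_0,s_m))$ for fixed $m$, and rule out unbounded contributions from translates that re-cross $A$; this is where the clean-separation and discreteness clauses of Definition~\ref{defn:cutting} do the work, together with the finiteness axiom of a wallspace. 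Once these boundedness statements are established, the distance computation of the previous paragraph completes the proof.
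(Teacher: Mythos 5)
The paper gives no proof of this lemma beyond the one-line remark preceding it, that it ``follows by considering hyperplanes intersecting axes in the dual cube complex,'' and your proposal is a faithful elaboration of exactly that route --- the no-inversion hypothesis plus Haglund's semisimplicity theorem~\cite{Haglund:semisimple} to make $\|g\|$ defined, the equivariant wall/hyperplane dictionary, and a count of $\langle g\rangle$--orbits along an axis --- so it is essentially the paper's approach, with more detail than the paper itself supplies. The boundedness step you flag as the main obstacle is indeed where all the content lies, and it can be closed as follows: if $g^k\widetilde W$ separates $A(s_0)$ from $g^mA(s_0)$, then telescoping along $A(s_0), gA(s_0),\ldots,g^mA(s_0)$ shows some translate of $\widetilde W$ already separates $A(s_0)$ from $gA(s_0)$, so by the wallspace finiteness axiom only finitely many $\langle g\rangle$--orbits of walls ever contribute to $d(c,g^mc)$, and each non-cutting orbit contributes a bounded amount because its crossing pattern along $\langle g\rangle\cdot A(s_0)$ has only finitely many sign changes (finiteness again, applied to the pair $A(s_0),gA(s_0)$) with equal asymptotic sides --- the clean-separation and evenly-spaced clauses of Definition~\ref{defn:cutting} being exactly what the paper's walls satisfy on the relevant axes, so that walls separating the two ends of $A$ are genuinely the cutting ones.
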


If $g\in G$ has finite order, then $g$ cannot be cut, so $\|g\|_{\mathcal 
W}=0$, and $g$ must fix a point in $C$, so $\|g\|_C=0$.  This paper is about torsion-free groups, so this situation only arises for $g=1$.

\subsection{Virtual cubulation}\label{subsec:virtual-cubulation}
We now elaborate on the standard induced action construction in the cubical setting (see also \cite[Lem. 7.13]{Wise:QCH}).    

\begin{defn}[Virtual translation length]\label{defn:vtl}
Let $G$ be a group with a finite-index subgroup $G'$ acting on a CAT(0) cube complex $C'$.  Given $g\in G$, let $r>0$ be such that $g^r\in G'$ (for instance, $r=[G:G']!$).  The \emph{virtual translation length} is $\|g\|^{virt}_{C'}:=\frac1r\|g^r\|_{C'}$, which is independent of $r$.  
\end{defn}

\begin{defn}[Wall-independent]\label{defn:wall-independent}
Let $S$ be a group acting on a CAT(0) cube complex $C$ and let $\{s_i\}_{i\in I}\subset S$ be a set of combinatorially hyperbolic elements, so for each $i$, there is a combinatorial axis $\axis(s_i)$ for $s_i$ in $C$.  Then $\{s_i\}_{i\in I}$ is \emph{wall-independent} if for all $g\in S$ and $i,j\in I$, either $gs_ig^{-1}=s_j^{\pm1}$, or the set of hyperplanes $h$ of $C$ such that both $h\cap g\axis(s_i)$ and $h\cap \axis(s_j)$ are nonempty is finite. Any two axes of $s_i$ are cut by the same hyperplanes, so this definition is independent of the choice of axes.
\end{defn}

\begin{defn}[Algebraic elevation]\label{defn:finite-set-elevation}
Let $G$ be a group and let $G'\leq G$ have finite index.  Let $\mathcal Q\subset G$.  Let $\mathcal Q_0\subset \mathcal Q$ contain exactly one representative of each $G$--conjugacy class in $\mathcal Q$.  For each $g\in G$ and each $q\in\mathcal Q_0$, let $q'$ be a generator of $g\langle q\rangle g^{-1}\cap G'$.  Let $\mathcal Q''$ be the set of all such $q'$ (as $g$ and $q$ vary) and let $\mathcal Q'\subset G'$ contain exactly one representative of each $G'$--conjugacy class in $\mathcal Q''$.  Then $\mathcal Q'$ is an \emph{elevation} of $\mathcal Q$.
\end{defn}

\begin{lem}[Induced cubulation]\label{lem:star}
Let $G$ be a group and $G'\leq G$ a finite index subgroup.  If $G'$ acts on a CAT(0) cube complex $C'$, then $G$ acts on a CAT(0) cube complex $C$ such that:
\begin{itemize}
 \item $C$ is isomorphic to $(C')^{[G:G']}$, with the product cubical structure.
 
 \item For $a,b\in G$ with $\|h_1ah_1^{-1}\|_{C'}^{virt}=\|h_2bh_2^{-1}\|_{C'}^{virt}$ for all $h_1,h_2\in G$, we have $\|a\|_C=\|b\|_C$.
\end{itemize}

Suppose, moreover, that $\mathcal Q\subset G$ and $\mathcal Q'\subset G'$ is an elevation (see Definition \ref{defn:finite-set-elevation}).  Suppose that $\mathcal Q'$ is wall-independent for the $G'$--action on $C'$, and that for all $q,p\in\mathcal Q$ and $g\in G$, $g\langle p\rangle g^{-1}\cap \langle q\rangle\neq\{1\}$ implies $p=q^{\pm1}$. Then $\mathcal Q$ is wall-independent for the $G$--action on $C$.
\end{lem}

\begin{proof}
Let $\rho:G'\to\Aut(C')$ be the given action.  Equipping $G$ with the left multiplication action and $C'$ with the action $\rho$, let $C$ be the set of maps $f:G\to C'$ such that $f(hg)=\rho(h)(f(g))$ for all $h\in G'$ and $g\in G$.  Define a $G$--action on $C$ by: given $f\in C$ and $g\in G$, let $g\cdot f:G\to C'$ be given by $g\cdot f(k)=f(kg)$ for all $k\in G$.

Let $g_1,\ldots,g_{[G:G']}$ be a right transversal for $G'$ in $G$.  Define a map $I:C\to (C')^{[G:G']}$ by $I(f)=(f(g_1),\ldots,f(g_{[G:G']}))$.  This is bijective since $G'$--equivariant maps from $G$ are determined uniquely by where they send the $g_i$, and hence, by pulling back the cubical structure/metric, we can regard $C$ as a CAT(0) cube complex with the product structure as in the statement.

The inverse $I:(C')^{[G:G']}\to C$ is given by $I((c_i)_i)=f$, where $f(hg_i)=\rho(h)c_i$ for $h\in G'$ and $i\leq [G:G']$.  The $G$--action on $(C')^{[G:G']}$ is thus given by $g\cdot(c_i)_i=I(g\cdot f)=(f(g_ig))_i$.  So if $g=g_j^{-1}ag_j$ for some $a\in G'$, then for any $(c_i)_i$, the $j$--coordinate of $g\cdot(c_i)_i$ is $$f(ag_j)=\rho(a)f(g_j)=\rho(a)c_j.$$

Let $r=[G:G']!$ and let $a\in G$.  Then $a^r\in \bigcap_ig_i^{-1}G'g_i$, so for each $i\leq [G:G']$ there exists $a_i\in G'$ such that $a^{nr}=g_i^{-1}a_i^ng_i$ for all $n>0$.  Hence, if $f\in C$, then the $i^{th}$--coordinate of $I(a^{nr}\cdot f)$ is $a^{nr}\cdot f(g_i)=f(a_i^ng_i)=\rho(a_i)^nf(g_i)$.  Hence
$$I(a^{nr}\cdot f)=(\rho(a_1)^n(f(g_1)),\ldots,\rho(a_{[G:G']})^n(f(g_{[G:G']}))).$$
Hence the $\ell_1$--distance from $I(a^{nr}\cdot f)$ to $I(f)$ is 
$\sum_i\dist_{C'}(\rho(g_ia^{r}g_i^{-1})^n(f(g_i)),f(g_i)),$
where we have used $g_ia^rg_i^{-1}=a_i$. Dividing by $n$ and passing to the limit as $n\to\infty$ gives $\|a^r\|_C=\sum_i\|g_ia^rg_i^{-1}\|_{C'}=r\sum_i\|g_iag_i^{-1}\|^{virt}_C.$  This implies that $\|a\|_C=\|b\|_C$ provided all conjugates of $a,b$ have equal virtual translation length.

\textbf{Description of walls:}  Each hyperplane of $C$ is associated to a pair $(W,g_i)$, where $W\subset C'$ is a hyperplane: $(W,g_i)$ is the set of points in $\prod_{j=1}^{[G:G']}C'$ with $i$--coordinate in $W$.  

\textbf{Wall-independence:}  Let $\mathcal Q$ and $\mathcal Q'$ be as in the statement, and suppose that $\mathcal Q'$ is wall-independent for the $G'$--action on $C'$.  Let $\mathcal Q_0$ be a set of conjugacy class representatives for $\mathcal Q$ in $G$.  Then by re-choosing the coset representatives $g_1,\ldots,g_{[G:G']}$, we can assume that the elements of $\mathcal Q'$ have the form $g_iq^{n(q,i)}g_i^{-1}$ for $q\in\mathcal Q_0$ and $1\leq i\leq [G:G']$.  

Suppose that $\mathcal Q$ is not wall-independent for the $G$--action on $C$.  Let $p,q\in\mathcal Q$ and $g\in G$ be such that  $gpg^{-1}$ and $q$ are cut by infinitely many common walls in $C$.  So by the pigeonhole principle there exists $j$ such that infinitely many walls of the form $(W,g_j)$ cut both $gpg^{-1}$ and $q$.  These same walls cut $gp^rg^{-1}$ and $q^r$, both of which belong to $g_j^{-1}G'g_j$.  Hence $g_jg p^rg^{-1}g_j^{-1}$ and $g_j^{-1}q^rg_j$ are elements of $G'$ that are both cut by infinitely many walls for the action of $G'$ on $C'$.  Now, $g_j^{-1}q^rg_j \in a \langle q_0\rangle a^{-1}$ for some $q_0\in\mathcal Q'$ and $a\in G'$ and $(g_jg)p^r(g_jg)^{-1}\in b\langle q_1\rangle b^{-1}$ for some $b\in G'$ and $q_1\in \mathcal Q'$.  So, infinitely many hyperplanes in $C'$ cut both $aq_0a^{-1}$ and $bq_1b^{-1}$, so by the wall-independence assumption, we have $aq_0a^{-1}=bq_1^{\pm1}b^{-1}$.  Hence $g\langle p^r\rangle g^{-1}$intersect $\langle q^r\rangle$ nontrivially, so by our hypothesis about $\mathcal Q$, $p=q^{\pm1}$, so $\mathcal Q$ is wall-independent.   
\end{proof}

\subsection{Finitely many orbits of hyperplanes}\label{subsec:essential-core}
We recall a well-known fact (see e.g. \cite{CapraceSageev:rank}):

\begin{lem}\label{lem:finitely-many-orbits}
 If $G$ is finitely-generated group acting on a CAT(0) cube complex $X$,  then there is a convex $G$--invariant subcomplex $Y\subseteq X$ with finitely many orbits of hyperplanes.
\end{lem}

\subsection{Auxiliary $F\times\integers$ cube complexes with prescribed elliptics}\label{subsec:small-cancellation-free-times-Z}
When cubulating graphs of groups with $F\times\integers$ vertex groups and $\integers\times\integers$ edge groups, our strategy will be to construct cubical actions for the vertex groups where the edge groups are cubulated in a specific way, so we can extend walls over the whole graph.  We need to add additional walls in the vertex groups in order to obtain a free action on a cube complex, but we would like to do this without modifying the cubulations of the edge groups.  The following lemma enables this, at the expense of guaranteeing that the dimension of the cube complex will be infinite.

\begin{lem}[Improper cubical action with small-cancellation]\label{lem:small-cancellation-improper}
Let $F$ be a finite-rank free group and let $\mathcal P\subset F$ be a finite set of nontrivial elements such that $\langle p\rangle$ is a maximal cyclic subgroup of $F$ for each $p\in\mathcal P$.  Then $F$ acts on a CAT(0) cube complex $Z_{sc}$ in such a way that for all $a\in F$, we have $\|a\|_{Z_{sc}}=0$ if and only if $a$ is conjugate into $\langle p\rangle$ for some $p\in\mathcal P$.
\end{lem}

\begin{proof}
Without loss of generality, no two of the maximal cyclic subgroups $\langle p\rangle,\ p\in \mathcal P$ of $F$ are conjugate, and by conjugating, we can assume that all elements of $\mathcal P$ are cyclically reduced.  Let $\mathcal S\subset F$ be a set of cyclically reduced elements containing $\mathcal P$ and containing exactly one element of each conjugacy class of nontrivial $f\in F$ such that $\langle f\rangle$ is a maximal cyclic subgroup.

Let $f:\naturals\to \mathcal S$ be an enumeration of $\mathcal S$, with $f(1),\ldots,f(|\mathcal P|)$ the elements of $\mathcal P$.  For $g\in F-\{1\}$, let $\#(g)$ be the unique $i$ so that $g$ is conjugate into $\langle f(i)\rangle$.  

For each $n\in\naturals$, let $q_n>\max_{1\leq i\leq n}\{12, 6|f(i)|\}$.  For $n\geq 1$, let $$F_n=F/\langle\langle f(1)^{q_n},\ldots,f(n)^{q_n}\rangle\rangle.$$
Our choice of $q_n$ ensures that the above presentations are $C'(\frac16)$, so $F_n$ acts properly and cocompactly on a CAT(0) cube complex $C_n$ by \cite[Thm. 1.2]{Wise:small-cancellation}.  

The torsion elements of $F_n$ are powers of $f(1),\ldots,f(n)$ and their conjugates.  So, for each $g\in F-\{1\}$, if $n<\#(g)$, then $g$ has infinite-order image in $F_n$, and $n\geq \#(g)$ implies $g$ has finite-order image.  By cubically subdividing, we can assume that every isometry of each $C_n$ has a combinatorial axis or fixes a $0$--cube.

Fix a base $0$--cube $c_n\in C_n$.  Let $Z_{sc}^{(0)}\subset \prod_{n> |\mathcal P|}C_n^{(0)}$ be the set of $(v_n)_{n\geq 1}$ such that $\sum_{n> |\mathcal P|}\dist_{C_n}(c_n,v_n)<\infty$, where $\dist_{C_n}$ is the $\ell_1$ metric on $C_n$.  Since this is a sum of nonnegative integers, the condition is equivalent to asking that $v_n\neq c_n$ for only finitely many values of $n$.  Now join $(v_n)_n,(w_n)_n$ by an edge if there is a unique $m$ with $v_m\neq w_m$ and $v_m,w_m$ are adjacent in $C_m$.  Add higher-dimensional cubes where their $1$--skeleta appear.  We now have a CAT(0) cube complex $Z_{sc}$ where $F$ acts by isometries, acting on the $n^{th}$ factor via $F\mapsto F_n$.  (The product action preserves the subcomplex $Z_{sc}$ of the infinite product since $\|g\|_n=0$ for $n\geq \#(g)$ and all $g\in F$.)  On the other hand, $\|g\|_n>0$ for $n<\#(g)$, so $g$ is hyperbolic on $Z_{sc}$ if and only if $g$ is not conjugate into $\langle f(i)\rangle$ for some $i\leq |\mathcal P|$.  This completes the proof.
\end{proof}

\subsection{Matching shapes for subgroups}\label{subsec:shape}
The notion of the \emph{shape} given to a subgroup by an action of the ambient group on a CAT(0) cube complex is useful for cubulating graphs of groups.

\begin{defn}[Shape]\label{defn:shape}
Let $H,K$ be isomorphic groups acting on CAT(0) cube complexes $Y,Z$ respectively.  Then $H\curvearrowright Y$ and $K\curvearrowright Z$ \emph{have the same shape for the isomorphism $\phi:H\to K$} if there is a cubical isometry $\psi:Y\to Z$ where $\phi(h)(\psi(y))=\psi(hy)$ for all $h\in H$ and $y\in Y$.  
\end{defn}

\begin{defn}[Isometric subcomplex]\label{defn:isometric-subcomplex}
Let $\widetilde X$ be a CAT(0) cube complex.  A CAT(0) subcomplex $\widetilde Y\subset\widetilde X$ is \emph{isometric} if any $x,y\in\widetilde Y^{(0)}$ are joined by a $\widetilde X^{(1)}$--geodesic lying in $\widetilde Y$.
\end{defn}

\begin{rem}\label{rem:isometric-subcomplex}
Convex subcomplexes are isometric, but the converse does not hold.  Also, if $\widetilde Y\subset\widetilde X$ is isometric, then each hyperplane of $\widetilde X$ intersects $\widetilde Y$, if at all, in a single hyperplane of $\widetilde Y$, and all hyperplanes of $\widetilde Y$ have the form $h\cap \widetilde Y$ for a unique hyperplane $h$ of $\widetilde X$.    \end{rem}

\begin{defn}[Subgroup shape]\label{defn:subgroup-shape}
Let $G$ act on the CAT(0) cube complex $\widetilde X$.  Let $H,K\leq G$, and fix an isomorphism $\phi:H\to K$.  Then the $G$--action on $\widetilde X$ \emph{gives $H$ and $K$ the same shape for $\phi$} if there are isometric subcomplexes $\widetilde Y,\widetilde Z$ of $\widetilde X$ that are, respectively, $H$ and $K$ invariant and the restrictions $H\curvearrowright \widetilde Y$ and $K\curvearrowright \widetilde Z$ of the $G$--action on $\widetilde X$ have the same shape for $\phi$.
\end{defn}

We are interested in the shapes of free abelian subgroups of rank $1$ or $2$.  The following appears in various forms in the literature; here is a version with no dimension restriction:

\begin{lem}[Axis theorem]\label{lem:generalised-axis}
Let $G$ be a group acting without inversions on a CAT(0) cube complex $\widetilde X$, and suppose that $A\leq G$ is a finite-rank free abelian subgroup such that the action of $A$ on $\widetilde X$ is free.  Then there exists $N\in[\rank A,\infty)$ such that there is an isometric subcomplex $\widetilde Y\subset\widetilde X$ that is $G$--invariant and isomorphic to the standard tiling of $\reals^N$ by unit $N$--cubes.
\end{lem}

\begin{proof}
By \cite[Lem. 5.6]{Genevois}, and our assumption that isometries of $\widetilde X$ are either combinatorially elliptic or hyperbolic (the action is without inversions), $A$ stabilises a \emph{median flat} $\widetilde Y^{(0)}$ in $\widetilde X$.  This is a finite-dimensional median subalgebra of $\widetilde X^{(0)}$ consisting of the union of $0$--skeletons of axis of elements of $A$.  Hence the CAT(0) cube complex $\widetilde Y$ with $0$--skeleton $\widetilde Y^{(0)}$ naturally includes into $\widetilde X$ and by, for instance, \cite[Lem. 2.11]{HagenPetyt}, $\widetilde Y$ is isometric.  Now, since $\widetilde Y$ is finite-dimensional, we can apply \cite[Thm. 4.3]{Woodhouse:axis} to the $A$--action on $\widetilde Y$ to replace $\widetilde Y$ by an isometric subcomplex decomposing as the product of $N$ cubical quasilines, for some $N\geq \rank A$.  Now choose invariant combinatorial geodesics in the factors, and replace $\widetilde Y$ by their product.
\end{proof}

\begin{defn}\label{defn:generalised-axis}
Given $G,A,$ and $\widetilde X$ as in Lemma \ref{lem:generalised-axis}, a subcomplex $\widetilde Y\subset\widetilde X$ satisfying the conclusion of the lemma is a \emph{generalised axis} for $A$.
\end{defn}

The next lemma is immediate from the definitions:

\begin{lem}\label{lem:shapes-and-products}
Let $G$ act on the CAT(0) cube complexes $\widetilde X_1$ and $\widetilde X_2$.  Suppose that $H,K\leq G$ and that the $G$--actions on $\widetilde X_1$ and $\widetilde X_2$ both give the same shape to $H$ and $K$, for some fixed isomorphism $\phi:H\to K$.  Then the diagonal action of $G$ on $\widetilde X_1\times\widetilde X_2$ gives the same shape to $H$ and $K$, for the same isomorphism $\phi$.
\end{lem}


\subsection{Cubical small-cancellation theory}\label{subsec:small-cancellation}
We review background from \cite{Wise:QCH}.  Throughout,  $\widetilde X$ denotes a CAT(0) cube complex and $X$ a nonpositively-curved cube complex.  For a hyperplane $\widetilde{U}$ of $\widetilde{X}$, we denote by $N(\widetilde{U})$ its \emph{carrier}, i.e. the union of all closed cubes intersecting $\widetilde{U}$.  We do the same for immersed hyperplanes in $X$. More background on cubical presentations appears in \cite[Sec. 3.b]{Wise:QCH}, but the main definition is:

\begin{defn}[Cubical presentation]\label{defn:cubical_presentation}
A \emph{cubical presentation} $\langle X | \{Y_i:i\in I\}\rangle$ consists of connected non-positively curved cube complexes $X$ and $\{Y_i\}_{i\in I}$, and local isometries $\{Y_i \to X\}_{i\in I}$. We use the notation $X^*$ for the cubical presentation above. As a topological space, $X^*$ is $X$ with a cone on each $Y_i$.
\end{defn}

  Each $Y_i\to X$ is $\pi_1$--injective, and $\pi_1X^*\cong\pi_1X/\nclose{\{\pi_1Y_i:i\in I\}}$.  For each $i\in I$, the cone over $Y_i$ in $X^*$ lifts to the universal cover $\widetilde{X^*}$, and accordingly $Y_i\to X\to X^*$ lifts to $Y_i\to\widetilde{X^*}$.

\begin{defn}[Piece]\label{defn:piece}
A \emph{cone-piece} in $Y_i$ is a component of $\widetilde{Y}_i \cap g \widetilde{Y}_j$, where $g \in \pi_1(X)$, excluding the case where $i = j$ and $g \in \stabilizer( \widetilde{Y_i} )$. A \emph{wall-piece} in $Y_i$ is a component of $\widetilde{Y}_i \cap N(\widetilde{U})$, where $\widetilde{U}$ is a hyperplane that is disjoint from  $\widetilde{Y}_i$.  A \emph{piece-path} in $Y$ is a path in a piece of $Y$.  We sometimes abuse language and refer to piece-paths as \emph{pieces}.
\end{defn}

\begin{rem}\label{rem:piece-stuff}
The general definition of a piece is more complicated; the given definition is that of a \emph{contiguous} piece.  As shown in \cite{Wise:QCH}, it suffices to work with contiguous pieces.  Another subtlety in the definition (see \cite[Conv. 3.3]{Wise:QCH} and \cite[Rem. 3.10]{ArzhantsevaHagen}), about the difference between $\pi_1Y_i\leq \pi_1X$ and $\stabilizer_{\pi_1X}(\widetilde Y_i)$, is irrelevant, because in our applications, $\stabilizer_{\pi_1X}(\widetilde Y_i)$ is cyclic, eliminating the subtlety (whose nature we therefore won't mention).     
\end{rem}

\begin{defn}[$C'(\alpha)$ cubical presentation]\label{defn:c12}
The cubical presentation $X^*=\langle X\mid\{Y_i:i\in I\}\rangle$ satisfies the  \emph{$C'(\alpha)$ condition} if $|P|\leq\alpha|S|$ for each geodesic piece-path $P$ and each essential closed path $S\to Y_i$ with $P$ a subpath of $S$.
\end{defn}

Next is a simplified form of the $B(6)$ condition described in~\cite[Def. 5.1, Rem. 5.4]{Wise:QCH}.

\begin{defn}[Simplified $B(6)$] \label{defn:B6}
Let $X^*=\langle X\mid\{Y_i:i\in I\}\rangle$ be a cubical presentation satisfying the $C'(1/14)$ condition.  Then $X^*$ satisfies the \emph{$B(6)$ condition} if the following hold:
\begin{enumerate}
 \item \label{B6:equivalence} for each $i\in I$, there is an equivalence relation $\sim_i$ on the hyperplanes of $Y_i$.
 \item \label{B6:wallspace} for each $i\in I$ and each pair of (possibly equal) hyperplanes $U,V$ of $Y_i$ with $U\sim_i V$, the hyperplanes $U,V$ do not cross or osculate (recall that hyperplanes $U,V$ \emph{osculate} if they are disjoint but have intersecting carriers);
 \item \label{B6:wallspace_2} for each $i\in I$, and each $\sim_i$--class $[U]$ of hyperplanes, $W=\cup_{V\in[U]}V$ is a \emph{wall}: the space $Y_i-W$ consists of two subspaces $\OL Y_i,\OR Y_i$ such that $\closure{\OL Y_i}\cap\closure{\OR Y_i}=W$;
 \item \label{B6:homotopy} if $P\to Y_i$ is a path that is the concatenation of at most $7$ piece-paths and $P$ starts and ends on the carrier $N(U)$ of a wall then $P$ is path-homotopic into $N(U)$;

\item \label{B6:aut} $\Aut(Y_i\rightarrow X)$ preserves the wallspace structure.
\end{enumerate}
\end{defn}

The walls in $Y_i$ Definition~\ref{defn:B6} need not be connected, unlike the walls in Section~\ref{subsec:cut_wall}.  

\subsection{Free cubical actions of generalised $B(6)$ groups}\label{subsec:cubulating-small-cancelllation}
The $B(6)$ condition provides a wallspace structure on $\widetilde{X^*}$ as follows: two hyperplanes of $\widetilde{X^*}$ are \emph{equivalent} if they intersect a lift $Y_i\hookrightarrow \widetilde{X^*}$ of some $Y_i$ in equivalent hyperplanes of that $Y_i$.  The transitive closure of this relation is an equivalence relation on the hyperplanes of $\widetilde{X^*}$ and the union of the hyperplanes in each class is a wall.  The CAT(0) cube complex dual to this wallspace is the cube complex \emph{associated} to $X^*$; observe that $\pi_1X^*$ acts on this cube complex.  This is explained in \cite[Sec. 5]{Wise:QCH}.  We now state a result from \cite{Wise:QCH} ensuring that this construction gives a free action.

\begin{defn}[Proximate]\label{defn:proximate}
Let $\langle X\mid\{Y_i:i\in I\}\rangle$ be a cubical presentation satisfying 
Definition~\ref{defn:B6}.\eqref{B6:equivalence},\eqref{B6:wallspace},\eqref{B6:wallspace_2}.  A hyperplane $U$ in 
$Y_i$ is \emph{proximate} to a $0$--cube $v$ of $Y_i$ if 
there is a 1-cube $u$ dual to $U$ and a path $AB$ that starts with $u$ and ends with $v$,
and where each of $A$ and $B$ is either a 1-cube or lies in a piece.  A wall $W$ in $Y_i$ is \emph{proximate} 
to $v$ if some hyperplane $U$ of $W$ is proximate to $v$.
\end{defn}

Given $\langle X\mid\{Y_i:i\in I\}\rangle$ as in Definition~\ref{defn:proximate}, a set $\mathcal R$ of hyperplanes 
of $X$ is \emph{preferred} if each $Y_i$ has the following property: let $W$ be a wall in $Y_i$.  Then either all 
hyperplanes of $W$ map to a hyperplane of $X$ belonging to $\mathcal R$, or no hyperplane of $W$ maps to a 
hyperplane in $\mathcal R$.  If $\mathcal R$ is a preferred set of hyperplanes in $X$, then a wall $\widetilde W$ of 
$\widetilde{X^*}$ is \emph{preferred} (with respect to $\mathcal R$) if some (and hence any) hyperplane in 
$\widetilde W$ maps to a preferred hyperplane in $X$.  If $\widetilde W$ is not preferred, then none of its 
constituent hyperplanes maps to a preferred hyperplane in $X$.

The following is Theorem~5.40 in~\cite{Wise:QCH}:

\begin{thm}\label{thm:cubulating X star}
Let $\langle X \mid \{Y_i\}_i\rangle$ be a cubical presentation.  Suppose that:
\begin{enumerate}
\item $X^*$ satisfies the $B(6)$ condition and has \emph{short innerpaths} (see \cite[Definition~3.69]{Wise:QCH}).

\item $X$ contains a preferred set $\mathcal R$ of hyperplanes.

\item \label{LS:strong separation}
The following holds for each $Y\in\{Y_i\}$.  Let $\kappa\to Y$ be a geodesic with endpoints $p,q$.  Let $w_1,w'_1$ 
be hyperplanes of $Y$ lying in the same wall and mapping under $Y\to X$ to hyperplanes in $\mathcal R$.  Suppose 
that $\kappa$ traverses a $1$--cube dual to $w_1$ and either $w'_1$ is proximate to $q$ or $w_1'$ is dual to a 
$1$--cube traversed by $\kappa$.  Then there is a preferred wall $w_2$ in $Y$ that separates $p,q$ but is not proximate to $p$ 
or $q$.
\end{enumerate}
Let $g\in\pi_1X^*$.  Then one of the following holds:
\begin{enumerate}
     \item there exists $Y\in\{Y_i\}$ so that $g\in\Aut(Y)$ for some lift of $Y$ to $\widetilde 
X^*$;\label{item:in_aut}
\item $g$ is cut by a preferred wall of $\widetilde{X^*}$;\label{item:preferred_cut}
\item $g$ is the image of $\tilde g\in\pi_1X$ not cut by a hyperplane of $\widetilde X$ mapping to a 
hyperplane in $\mathcal R$;\label{item:not_preferred_cut}
\item $g\in\pi_1X^*$ has finite order.\label{item:torsion}
\end{enumerate}

\end{thm}

In our applications, $\langle X \mid \{Y_i\}_i\rangle$ will satisfy the short innerpaths condition 
by~\cite[Lemma 3.70]{Wise:QCH} because of the metric small-cancellation condition.  Therefore, it is not 
necessary to define short innerpaths.  We will apply Theorem~\ref{thm:cubulating X star} in the case where each $Y_i$ is a specific CAT(0) cube complex mapping by a local isometry to $X$.  Finally we need  \cite[Thm. 5.18]{Wise:QCH}:

\begin{thm}\label{thm:520}
Let $X^*$ be a $B(6)$ presentation and let $W$ be a wall in $\widetilde{X^*}$.  If $H_1,H_2$ are hyperplanes in $W$, and $Y$ is a cone, then $H_1\cap Y$ and $H_2\cap Y$ lie in the same wall of $Y$.
\end{thm}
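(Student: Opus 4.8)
The plan is to induct on the length of a shortest chain of elementary equivalences in $W$ joining $H_1$ to $H_2$. By the construction of the wallspace structure on $\widetilde{X^*}$, the hypothesis that $H_1,H_2$ lie in the common wall $W$ means they are joined by a chain $H_1=K_0,K_1,\dots,K_m=H_2$ of hyperplanes of $\widetilde{X^*}$ in which each consecutive pair $K_{j-1},K_j$ meets a common cone $\widetilde Y_{i(j)}$ in a pair of $\sim_{i(j)}$--equivalent hyperplanes. We may assume $H_1$ and $H_2$ actually meet $Y$, since otherwise the statement is vacuous. Throughout I would use that each cone $\widetilde Y_i\hookrightarrow\widetilde{X^*}$ is a convex subcomplex of $\widetilde X$ (because $Y_i\to X$ is a local isometry), so that whenever a hyperplane $H$ meets $\widetilde Y_i$, the intersection $H\cap\widetilde Y_i$ is a single hyperplane of $Y_i$ with carrier $N(H)\cap\widetilde Y_i$.

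The reduction step is the easy part of the induction. Given a shortest chain as above with $m\ge 2$: if some intermediate $K_j$ with $1\le j\le m-1$ also meets $Y$, then the sub-chains $K_0,\dots,K_j$ and $K_j,\dots,K_m$ are strictly shorter, so by induction $H_1\cap Y$ and $K_j\cap Y$ lie in a common wall of $Y$, as do $K_j\cap Y$ and $H_2\cap Y$; hence so do $H_1\cap Y$ and $H_2\cap Y$. So it remains to treat (a) the case $m=1$, and (b) a minimal counterexample in which none of $K_1,\dots,K_{m-1}$ meets $Y$.

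Case (a) is the core case: $H_1,H_2$ both meet $Y$, and they also meet a cone $Z$ with $H_1\cap Z,H_2\cap Z$ equivalent hyperplanes of $Z$, and the goal is that $H_1\cap Y,H_2\cap Y$ are equivalent in $Y$. If $Z=Y$ there is nothing to prove, so assume $Z\neq Y$. Here the $C(\frac{1}{12})$ hypothesis enters decisively: every component of $Z\cap Y$ is a cone-piece, hence short by the small-cancellation bound. Using this, together with the fact that $H_1\cap Z$ and $H_2\cap Z$ both lie on the carrier $N(U)$ of a single wall $U$ of $Z$ (properties \eqref{B6:wallspace}--\eqref{B6:wallspace_2} of Definition~\ref{defn:B6}), I would produce, inside $Y$, a path from a point of $N(H_1\cap Y)$ to a point of $N(H_2\cap Y)$ that is a concatenation of at most seven piece-paths and that starts and ends on the carrier $N(V)$ of one wall $V$ of $Y$; then \eqref{B6:homotopy} of Definition~\ref{defn:B6} forces this path to be path-homotopic into $N(V)$, which places both $H_1\cap Y$ and $H_2\cap Y$ in $V$. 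Constructing that bounded piece-path — by tracking how the ``merging'' of $H_1$ with $H_2$ across $Z$ propagates back through the small overlap $Z\cap Y$ and along the carriers involved — is the technical crux of this case.

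Case (b), in which the chain ``wraps around'' $Y$ while avoiding it at every intermediate stage, is the main obstacle, and it is where the full strength of the $B(6)$ condition is needed. I would attack it with a minimal-complexity disc-diagram argument over $X^*$: assemble a diagram whose cone-cells are the cones $\widetilde Y_{i(j)}$ used by the chain together with $Y$, bounded by arcs lying in the carriers $N(K_j)$; observe via $C(\frac{1}{12})$ that the pieces along cone-cells are short; and apply a Greendlinger-type lemma to locate a cone-cell that is a shell with a long outer arc. Rechoosing the chain across that cone-cell then either produces a strictly shorter chain through a hyperplane that meets $Y$ — contradicting minimality via the reduction step — or directly exhibits $H_1\cap Y$ and $H_2\cap Y$ in a common wall of $Y$. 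Carrying this out carefully, keeping the disc diagram, the several cone-wallspace structures, and the convexity of carriers all compatible, is what I expect to consume most of the work; it is essentially the content of Theorem~5.20 of~\cite{WiseIsraelHierarchy}.
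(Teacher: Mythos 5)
The first thing to note is that the paper does not prove this statement at all: it is imported verbatim as Theorem~5.20 of~\cite{WiseIsraelHierarchy} and used as a black box, so there is no internal proof to compare yours against. Judged on its own terms, your proposal is a plan rather than a proof, and the gaps sit exactly where the content lies. In case (a) you say you ``would produce'' a path in $Y$ from $N(H_1\cap Y)$ to $N(H_2\cap Y)$ that is a concatenation of at most seven piece-paths and that starts and ends on the carrier $N(V)$ of a single wall $V$ of $Y$; but producing such a path, and in particular identifying the wall $V$, is precisely what has to be proved. To invoke Definition~\ref{defn:B6}.\eqref{B6:homotopy} you must already know that both endpoints lie on the carrier of one wall, and even granting that, path-homotopy into $N(V)$ does not by itself place $H_2\cap Y$ in $V$ without a further argument using \eqref{B6:wallspace}--\eqref{B6:wallspace_2} (equivalent hyperplanes neither cross nor osculate). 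The $C'(\frac{1}{12})$ bound on the cone-piece $Z\cap Y$ is indeed an ingredient, but no construction is actually carried out, so the core of this case is missing.

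Case (b) is weaker still: the disc-diagram/Greendlinger scheme is only named, the key assertions (that the assembled diagram admits a shell cone-cell with a long outer arc, and that rechoosing the chain across it either shortens the chain through a hyperplane meeting $Y$ or directly yields the conclusion) are not argued, and you close by saying this case ``is essentially the content of Theorem~5.20 of~\cite{WiseIsraelHierarchy}'' --- that is, at the decisive moment you appeal to the very statement being proved. So as a standalone proof the proposal is incomplete and, at its crux, circular; it agrees with the paper only in the trivial sense that the paper likewise delegates the proof of this theorem to \cite{WiseIsraelHierarchy}.
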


\section{Wall-extension for graphs of groups}\label{sec:joining-walls}
Fix a group $G$ acting on a tree $T$. We always assume such actions are without inversions, so that the quotient $\bar T=G\backslash T$ is the underlying graph of a graph of groups. For each $v\in \vertices(\bar T)$, let $G_v$ denote the associated vertex group.  For each oriented edge $e\in\edges(\bar T)$, let $G_e$ be the associated edge-group, and let $G_e^\pm$ be the images, under the edge-monomorphisms, of $G_e$ in $G_{e^\pm}$, where $e^-$ and $e^+$ are the initial and terminal vertices of $e$.  Let $\phi_e:G_e^-\to G_e^+$ be the composition of the edge-monomorphism $G_e\to G_{e^+}$ with the isomorphism $G_e^-\to G_e$ obtained by inverting the  edge-monomorphism.

\subsection{Graphs of groups with matching walls}\label{subsec:matching-walls}
The following definition abstracts a situation that arises when cubulating polynomial mapping tori.

\begin{defn}\label{defn:graph-walls}
Let $G$ act cocompactly without inversions on the tree $T$. Assume:
\begin{enumerate}[(A)]
 \item $\mathcal Q\subset G$ is a finite (possibly empty) set of elements that are hyperbolic on $T$.  For each $q\in \mathcal Q$, let $\axis(q)$ be the axis of $q$ in $T$.  Assume $\stabilizer_G(\axis(q))=\langle q\rangle$ and there exists $\chi<\infty$ such that if $q,q'\in\mathcal Q$ and $g\in G$ and $\axis(q)\neq g\axis(q')$, then $\diam_T(\axis(q)\cap g\axis(q'))\leq \chi$.\label{item:stass-Q}
  
 \item For all $v\in\vertices(\bar T)$, the vertex group $G_v$ acts freely on a CAT(0) cube complex $\widetilde X_v$, with finitely many orbits of hyperplanes.\label{item:stass-cubulated}
 
 \item \label{item:stass-shape} For all $e\in\edges(\bar T)$, there is a $G_e^\pm$--invariant CAT(0) subcomplex $\widetilde X_e^\pm\subset \widetilde X_{e^\pm}$ that is isometric in $\widetilde X_{e^\pm}$, and $G_e^-\curvearrowright \widetilde X_e^-$ and $G_e^+\curvearrowright \widetilde X_e^+$ have the same shape with respect to $\phi_e$.  Let $\psi_e:\widetilde X_e^-\to\widetilde X_e^+$ be the cubical isomorphism from Definition \ref{defn:shape}.
\end{enumerate}
The data $(G,T,\{\widetilde X_v\}_v,\{\widetilde X_e^{\pm}\}_e,\{\psi_e\}_e)$ is called a \emph{graph of cubulated groups with compatibly cubulated edge-groups} if it satisfies \eqref{item:stass-cubulated} and \eqref{item:stass-shape}.  
\end{defn}

The main goal in this section is to prove:

\begin{prop}[Basic wall extension]\label{prop:joining-walls}
Let $(G,T,\{\widetilde X_v\}_v,\{\widetilde X_e^{\pm}\}_e,\{\psi_e\}_e)$ be a graph of cubulated groups with compatibly cubulated edge groups.  Let $\mathcal Q\subset G$ satisfy Definition \ref{defn:graph-walls}.\eqref{item:stass-Q}.  Then $G$ acts freely on a CAT(0) cube complex $\widetilde X$ with the following properties.
\begin{enumerate}[(I)] 
 \item \label{item:join-walls} $\widetilde X$ has finitely many $G$--orbits of hyperplanes.   

 \item \label{item:join-tree} There is a $G$--equivariant cubical map $\pi:\widetilde X\to T$.
 
 \item Let $v\in\vertices(\bar T)$ and let $\tilde v\in T$ be a vertex projecting to $v$.  Let $g\in G$ be such that $\stabilizer_G(\tilde v)=gG_vg^{-1}$.  Then $\pi^{-1}(\tilde v)$ is a $\stabilizer_G(\tilde v)$--invariant convex subcomplex of $\widetilde X$  and there is a cubical isometric embedding $g:\widetilde X_v\to \pi^{-1}(\tilde v)$ that is equivariant with respect to the $G_v$--action on $\widetilde X_v$, the $\stabilizer_G(\tilde v)$--action on $\pi^{-1}(\tilde v)$, and the isomorphism $G_v\to \stabilizer_G(\tilde v)$ given by conjugation by $g$.  Moreover, $\pi^{-1}(\tilde v)$ is the convex hull in $\widetilde X$ of the image of this embedding.\label{item:join-vertex-space}
 
 \item For each $\tilde e\in\edges(T)$, the subcomplex $\pi^{-1}(\tilde e)$ is a hyperplane carrier in $\widetilde X$.\label{item:join-hyperplane}
 
 \item \label{item:join-subtree} For any subtree $T'$ of $T$, the subcomplex $\pi^{-1}(T')$ is convex in $\widetilde X$.
 
 \item \label{item:join-edge-space} Let $e\in\edges(\bar T)$ and let $\tilde e\in\edges(T)$ be an edge mapping to $e$ with $\stabilizer_G(\tilde e)=G_e$ and let $m_{\tilde e}$ be its midpoint.  Then $\pi^{-1}(m_{\tilde e})$ is the convex hull in $\widetilde X$ of a $G_e$--invariant isometrically embedded CAT(0) subspace that is $G_e$--equivariantly isometric to $\widetilde X_e^-$.
 
\end{enumerate}
Hence $\widetilde Y_q=\pi^{-1}(\axis(q))$ is a convex subcomplex of $\widetilde X$ and $\stabilizer_G(\widetilde Y_q)=\langle q\rangle$, for each $q\in\mathcal Q$.  
\end{prop}

\begin{proof}
For each $v\in\vertices(\bar T)$, let $X_v=G_v\backslash \widetilde X_v$, which is a nonpositively-curved cube complex since $G_v$ acts freely without inversions.  For each $e\in\edges(\bar T)$, let $X_e^\pm=G_e^{\pm}\backslash \widetilde X_e$, which are also nonpositively-curved cube complexes with universal covers $\widetilde X_e^\pm\to X_e^\pm$.  The inclusion $\widetilde X_e^\pm\hookrightarrow\widetilde X_{e^\pm}$ descends to a cubical map $X_e^\pm\to X_{e^\pm}$ which is $\pi_1$--injective and induces the inclusion $G_e^\pm\to G_{e^\pm}$.  Moreover, $\psi_e$ descends to a cubical isomorphism $\bar\psi_e:X_e^-\to X_e^+$ inducing the isomorphism $\phi_e$ of fundamental groups. Define a graph of spaces as follows.
\begin{itemize}
 \item The underlying graph is $\bar T$.
 \item For each $v\in\vertices(\bar T)$, the vertex space is $X_v$.
 \item For each $e\in\edges(\bar T)$, the edge space is $X_e^-$.
 \item Equip $X_e^-\times[-1,1]$ with the product (nonpositively-curved) cubical structure, where $[-1,1]$ is regarded as a $1$--cube.
 \item Attach each $X_e^-\times[-1,1]$ as follows.  Identify $X_e^{-}\times\{-1\}$ with a subspace of $X_{e^-}$ using the map $X_e^-\times\{-1\}\stackrel{Id}{\longrightarrow}X_e^-\to X_{e^-}$, where the second arrow is induced by the inclusion $\widetilde X_e^-\hookrightarrow \widetilde X_{e^-}$.  Identify $X_e^-\times\{1\}$ with a subspace of $X_{e^+}$ using the map $X_e^-\times\{1\}\stackrel{\bar\psi_e}{\longrightarrow} X_e^+\to X_{e^+}$, where the second arrow is induced by the inclusion $\widetilde X_e^+\hookrightarrow\widetilde X_{e^+}$.
\end{itemize}
The total space of the resulting graph of spaces is denoted $Z$, which is a cube complex (containing each $X_v$ as a subcomplex) that need not be nonpositively curved since the attaching maps of the edge spaces need not be local isometries (recall $\widetilde X_e^\pm$ is isometric, but not necessarily convex, in $\widetilde X_{e^\pm}$).  Note that $\pi_1Z\cong G$ since $G$ is the fundamental group of the graph of groups with underlying graph $\bar T$, vertex groups $G_v,\ v\in\vertices(\bar T)$, etc.  Let $\widetilde Z\to Z$ be the universal cover.

\textbf{Walls in $\widetilde Z$:}  There is a cellular map $\bar\rho:Z\to\bar T$ sending $X_v\to v$ for each $v\in\vertices(\bar T)$ and sending the image of $X_e^-\times[-1,1]$ to $e\cong [-1,1]$ for each $e\in\edges(\bar T)$.  This gives the usual $G$--equivariant surjection $\rho:\widetilde Z\to T$ sending each lift of $\widetilde X_v$ to the corresponding lift of $v$, and each lift of $\widetilde X_e\times[-1,1]$ to the corresponding lift of the edge $e\cong [-1,1]$.

Pulling back the cubical structure from $Z$ makes $\widetilde Z$ a cube complex.  The map $\rho$ decomposes $\widetilde Z$ as a tree of cube complexes whose vertex spaces are the $G$--translates of $\widetilde X_v,\ v\in\vertices(\bar T)$ and whose edge spaces are the $G$--translates of the CAT(0) cube complexes $\widetilde X_e^-\times[-1,1],\ e\in\edges(\bar T)$.  We now analyse hyperplanes.

For each edge $\tilde e$ of $T$, letting $m_{\tilde e}$ be the midpoint, $\rho^{-1}(m_{\tilde e})$ is a separating hyperplane in $\widetilde Z$; the associated halfspaces are the $\rho$--preimages of the components of $T-\{m_{\tilde e}\}$.  We record:    

\begin{defn}\label{defn:vertical-Z-wall}
Such hyperplanes are \emph{vertical} and $\mathcal V(\widetilde Z)$ is the set of vertical hyperplanes in $\widetilde Z$.    
\end{defn}

Next are \emph{horizontal} hyperplanes.  Let $\tilde v$ be a vertex of $T$ and consider the CAT(0) subcomplex $\rho^{-1}(\tilde v)$.  Let $h_{\tilde v}=h\subset \rho^{-1}(\tilde v)$ be a hyperplane.  For each edge $\tilde e$ of $T$ incident to $\tilde v$, there is a subcomplex $\widetilde X_{\tilde e}^\pm$ of $\rho^{-1}(\tilde v)$ (the $\pm$ corresponds to whether $\tilde v$ is terminal or initial in $\tilde e$) which is the image of $\rho^{-1}(m_{\tilde e})\times \{\pm 1\}$ under an edge map. By definition $\widetilde X_{\tilde e}^\pm\hookrightarrow\rho^{-1}(\tilde v)$ is a cubical isometric embedding.  Hence either $h\cap\widetilde X_{\tilde e}^\pm=\emptyset$, or $h\cap\widetilde X_{\tilde e}^\pm$ is a single hyperplane $h_{\tilde e}$ of the CAT(0) cube complex $\widetilde X_{\tilde e}^\pm$, which extends to a hyperplane $h_{\tilde e}\times[-1,1]$ of $\rho^{-1}(\tilde e)$.  Now, letting $\tilde w$ be the other vertex of $\tilde e$, the image of the $\tilde e$ edge map in $\rho^{-1}(\tilde w)$ is the isometrically embedded CAT(0) subcomplex $\widetilde X_{\tilde e}^\mp$, which contains a hyperplane $h_{\tilde e}\times\{\mp1\}$.  Hyperplanes of any $\ell_1$--embedded CAT(0) subcomplex of $\rho^{-1}(\tilde w)$ are intersections of the subcomplex with hyperplanes of $\rho^{-1}(\tilde w)$; specifically, there exists a unique hyperplane $h_{\tilde w}$ of $\rho^{-1}(\tilde w)$ such that $h_{\tilde e}\times\{\mp 1\}=h_{\tilde w}\cap \widetilde X_{\tilde e}^\mp$.

Define $h_{\tilde v}$ to be equivalent to each such $h_{\tilde w}$, as $\tilde w$ varies over the vertices incident to $\tilde v$ in $T$ with the property that the edge $\tilde e$ from $\tilde v$ to $\tilde w$ corresponds to an edge space in $\rho^{-1}(\tilde v)$ intersecting $h_{\tilde v}$ nontrivially.  In this case, we also declare $h_{\tilde v}$ to be equivalent to the hyperplane $h_{\tilde e}\times[-1,1]$ of $\widetilde X_{\tilde e}^-\times[-1,1]$, which has the property that $h_{\tilde v}$ and $h_{\tilde w}$ intersect $\widetilde X_{\tilde e}^-\times\{\pm 1\}$ in $h_{\tilde e}\times \{\pm1\}$ and $h_{\tilde e}\times\{\mp1\}$ respectively (up to possibly changing signs).  Taking the transitive closure gives a tree of spaces whose vertex spaces are hyperplanes $h_{\tilde v}\subset \rho^{-1}(\tilde v)$ for various vertices $\tilde v$ of $T$ and whose edge spaces are hyperplanes of the form $h_{\tilde e}\times [-1,1]$.  The total space $H$ is equipped with a map $H\to\widetilde Z$ which is the inclusion on each vertex space and edge space of $H$.  

Observe that $H$ intersects each $\rho^{-1}(\tilde v)$ in at most one hyperplane, i.e. two distinct hyperplanes $h_{\tilde v},h'_{\tilde v}$ in the same vertex space $\rho^{-1}(\tilde v)$ of $\widetilde Z$ cannot be equivalent; in other words, the underlying graph of $H$ injects in $T$.  (The key point here is that each edge space of $\widetilde X$ has isometrically embedded image in $\widetilde X_{\tilde v}$ and therefore intersects each hyperplane of $\widetilde X_{\tilde v}$ in a single component.)

Hence $H$ is a tree of CAT(0) cube complexes whose edge spaces are CAT(0) subcomplexes that are convex in their incident vertex subcomplexes (since hyperplanes are convex in CAT(0) cube complexes).  Thus $H$ is a CAT(0) cube complex.  Moreover, $H\to \widetilde Z$ is an embedding, and we regard $H$ as a subspace of $\widetilde Z$ intersecting each vertex/edge space in at most one hyperplane.  From the construction, $H$ locally separates $\widetilde Z$ into two components, so $\widetilde Z-H$ has two components, by the usual Mayer-Vietoris argument using simple connectivity of $\widetilde Z$.  Hence $H$ is a wall in $\widetilde Z$.  

\begin{defn}\label{defn:wandering-in-Z}
$H$ is a \emph{wandering} hyperplane and $\mathcal W(\widetilde Z)$ is the set of wandering hyperplanes.  
\end{defn}

Note that $\mathcal W(\widetilde Z)\cup\mathcal V(\widetilde Z)$ is a $G$--invariant set of walls in $\widetilde Z$.  Let $\widetilde X$ be the CAT(0) cube complex dual to the $G$--wallspace $(\widetilde Z, \mathcal W(\widetilde Z)\cup\mathcal V(\widetilde Z))$.  The hyperplanes of $\widetilde X$ are in a $G$--equivariant bijection with $\mathcal W(\widetilde Z)\cup\mathcal V(\widetilde Z)$.  Accordingly, the hyperplanes of $\widetilde X$ are \emph{vertical} or \emph{wandering}, and: 

\begin{defn}\label{defn:vertical-wandering-in-X}
We let $\mathcal W(\widetilde X)$ and $\mathcal V(\widetilde X)$ respectively denote the sets of hyperplanes in $\widetilde X$ corresponding to wandering and vertical walls in $\widetilde Z$.
\end{defn}

\textbf{The $G$--action on $\widetilde X$:} Each of $\mathcal W(\widetilde Z),\mathcal V(\widetilde Z)$ is $G$--invariant.  The $G$--action on the latter is cofinite since there are finitely many orbits of edges in $T$.  The $G$--action on the former is cofinite since there are finitely many $G$--orbits of vertices, each $\widetilde X_v$ has finitely many $G_v$--orbits of hyperplanes, and each wandering hyperplane is determined by a vertex space and a hyperplane of that vertex space.  Hence $G$ acts on $\widetilde X$ with finitely many orbits of hyperplanes, verifying \eqref{item:join-walls}.

Next, we check that the $G$--action on $\widetilde X$ is free.  Let $g\in G-\{1\}$.  If $g$ is hyperbolic on $T$, then $g$ is cut by a vertical wall, so $\|g\|_{\widetilde X}>0$.  Otherwise, up to conjugacy, $g\in G_v$ for some $v\in\vertices(\bar T)$, i.e. the action of $g$ on $\widetilde Z$ stabilises $\widetilde X_{\tilde v}$.  Since  $G_v$ acts on $\widetilde X_{\tilde v}$ freely (and we can assume that the action is without inversions), $g$ is cut by some hyperplane $h_{\tilde v}$ of $\widetilde X_{\tilde v}$.  Since the wandering hyperplane $H$ determined by $h_{\tilde v}$ intersects $\widetilde X_{\tilde v}$ in $h_{\tilde v}$ only, and $g$ has an axis in $\widetilde X_{\tilde v}$, we see that $g$ is cut by $H$.  Hence $\|g\|_{\widetilde X}>0$.  (Both translation length computations use Lemma \ref{lem:cut_wall_trans_length}.)  Hence the $G$--action on $\widetilde X$ is free, by Lemma \ref{lem:free}.

\textbf{The map $\pi$:}  Consider the $G$--wallspace with underlying set $\widetilde X^{(0)}$ and walls $\mathcal V(\widetilde X)$.  Note that the CAT(0) cube complex dual to this wallspace is $G$--equivariantly isomorphic to $T$, and forgetting the hyperplanes in $\mathcal W(\widetilde X)$ therefore yields a $G$--equivariant restriction quotient $\pi:\widetilde X\to T$.  (See \cite[Sec. 2]{CapraceSageev:rank} or \cite{HuangKleiner} for a description of restriction quotients.)  A general property of restriction quotients is that preimages of convex subcomplexes are convex subcomplexes, and preimages of hyperplanes are hyperplanes \cite[Sec. 4]{HuangKleiner}.  So, we have verified \eqref{item:join-tree}, \eqref{item:join-hyperplane}, \eqref{item:join-subtree}.

We next verify \eqref{item:join-vertex-space}.  Let $v\in\vertices(\bar T)$ and let $\tilde v\in T$ be a vertex mapping to $v$, so that $\stabilizer_G(\tilde v)=gG_vg^{-1}$ for some $g\in G$.  By the construction of $\widetilde Z$, we have that $\rho^{-1}(\tilde v)$ is an isometric copy of $\widetilde X_v$, and there is a cubical isomorphism $g:\widetilde X_v\to\rho^{-1}(\tilde v)$ that conjugates the given action $G_v\to\Aut(\widetilde X_v)$ to the action $gG_vg^{-1}\to \Aut(\rho^{-1}(\tilde v))$.

Note that there is a $G$--equivariant cubical embedding $\iota:\widetilde Z\to\widetilde X$.  Indeed, each $z\in\widetilde Z^{(0)}$ chooses a consistent orientation of the walls in $\mathcal W(\widetilde Z)\cup\mathcal V(\widetilde Z)$ and hence defines a unique $0$--cube $\iota(z)\in\widetilde X$.  Moreover, if $y,z$ are adjacent in $\widetilde Z$, then they are separated by a unique wall (because the walls in $\widetilde Z$ are just hyperplanes, which we showed above separate), so $\iota(y),\iota(z)$ are distinct, adjacent $0$--cubes in $\widetilde X$.  This defines $\iota$ on the $1$--skeleton, and one extends over higher-dimensional cubes using that $\widetilde X$ is CAT(0).  There is a commutative diagram
\begin{center}
 $
 \begin{diagram}
  \node{\widetilde Z}\arrow{s,l}{\rho}\arrow{e,t}{\iota}\node{\widetilde X}\arrow{s,r}{\pi}\\
  \node{T}\arrow{e,b}{Id}\node{T}
 \end{diagram}
 $
\end{center}
of $G$--equivariant cubical maps.  In particular, $\iota$ restricts to a $\stabilizer_G(\tilde v)$--equivariant embedding $\rho^{-1}(\tilde v)\to\pi^{-1}(\tilde v)$.  Recall that the hyperplanes of $\rho^{-1}(\tilde v)$ are exactly the nonempty intersections $H\cap \rho^{-1}(\tilde v)$, where $H$ varies over the wandering hyperplanes in $\widetilde Z$ that intersect $\rho^{-1}(\tilde v)$; this is because vertex spaces in $\widetilde Z$ are CAT(0) cube complexes and the wandering hyperplanes intersect vertex spaces in single hyperplanes.  On the other hand, $\pi^{-1}(\tilde v)$ is the convex subcomplex of $\widetilde X$ defined as follows: choose an arbitrary $z\in\rho^{-1}(\tilde v)$ and consider all $0$--cubes $y\in\widetilde X$ such that every hyperplane separating $\iota(z)$ from $y$ corresponds to a wandering hyperplane.  Each such $y$ is in $\pi^{-1}(\tilde v)$.  If some $0$--cube $y\in\widetilde X$ is separated from $\iota(z)$ by a vertical hyperplane, then $\pi(y)\neq \tilde v$.  So $\pi^{-1}(\tilde v)$ is spanned by the $0$--cubes $y$ differing from $\iota(z)$ only on wandering hyperplanes.  This shows that $\pi^{-1}(\tilde v)$ is the convex hull of $\iota(\rho^{-1}(\tilde v))$, and that the latter is a connected median subalgebra of the former, in particular it is isometrically embedded (see e.g. \cite[Sec. 2]{Bowditch} or \cite[Lem. 2.11]{HagenPetyt}).  (Note that $\iota(\rho^{-1}(\tilde v))$ might not be convex, because wandering hyperplanes that are disjoint in $\widetilde X_{e^-}$ must extend disjointly through $\widetilde X_e^-\times[-1,1]$ but could then map to crossing hyperplanes in $\widetilde X_{e^+}$, since $\widetilde X_{e}^+$ need not be convex in $\widetilde X_{e^+}$.)  We have shown \eqref{item:join-vertex-space}.

Now consider $\tilde e\in\edges(T)$ with stabiliser $G_e$, where $e$ is the edge of $\bar T$ to which $\tilde e$ maps.  Recall that $\rho^{-1}(m_{\tilde e})$ is a vertical hyperplane in $\widetilde Z$, and in fact coincides with $\widetilde X_e^-\times\{0\}$.  So $\pi^{-1}(m_{\tilde e})$ is the $\ell_1$ convex hull (it is a subcomplex of the cubical subdivision, and in fact a hyperplane) in $\widetilde X$ of $\iota(\rho^{-1}(m_{\tilde e}))$, proving \eqref{item:join-edge-space}.

\textbf{Axes for $\mathcal Q$:}  For each $q\in\mathcal Q$, $\axis(q)$ is a geodesic in $T$, so $\widetilde Y_q=\pi^{-1}(\axis(q))$ is a convex subcomplex of $\widetilde X$ by \eqref{item:join-subtree}.  The stabiliser of $\widetilde Y_q$ coincides with the stabiliser of $\axis(q)$ by \eqref{item:join-tree}, which is $\langle q\rangle$ by Definition \ref{defn:graph-walls}.\eqref{item:stass-Q}. 
\end{proof}

\begin{rem}[Dehn twisting walls]\label{rem:arranging-standing-assumption-with-isomorphisms}
Suppose that $G$ and $T$ satisfy Definition \ref{defn:graph-walls}.  Part \eqref{item:stass-shape} of the definition says that for each edge $e$ of $\bar T$, we have a cubical isomorphism $\psi_e:\widetilde X_e^-\to\widetilde X^+_e$ such that the diagram
\begin{center}
 $
 \begin{diagram}
 \node{\widetilde X_e^-}\arrow{s,l}{g}\arrow{e,t}{\psi_e}\node{\widetilde X_e^+}\arrow{s,r}{\phi_e(g)}\\
 \node{\widetilde X_e^-}\arrow{e,b}{\psi_e}\node{\widetilde X_e^+}
 \end{diagram}
 $
\end{center}
commutes for all $g\in G_e^-$.  Now, fix $h\in G_e^+$.  Then $h\circ\psi_e:\widetilde X_e^-\to\widetilde X_e^+$ is again a cubical isomorphism, and the diagram above continues to commute when $\psi_e$ is replaced by $h\circ\psi_e$ and $\phi_e$ is replaced by $Ad_h\circ \phi_e:G_e^-\to G_e^+$, where $Ad_h\in\Aut(G_e^+)$ is conjugation by $h$.  In particular, if $h$ is central in $G_e^+$, then we can modify the construction of $\widetilde X$ in Proposition \ref{prop:joining-walls} as follows: build $Z$ exactly as in the proof of the proposition, except glue $X_e^-\times\{1\}$ to $X_{e^+}$ using the map $X_e^-\times \{1\}\stackrel{\bar \psi_e}{\longrightarrow} X_e^+\to X_{e^+}$, where the second arrow is induced by $\widetilde X_e^+\stackrel{h}{\longrightarrow}\widetilde X_e^+\hookrightarrow\widetilde X_{e^+}$.

We will only use this in the proof of Corollary \ref{cor:cyclic-splitting-turns}, where the edge groups are cyclic.  In this case, $G_e$ is generated by an element $t_e$ and $\widetilde X_e$ is a line.  We will choose $h=t_e^n$ for some $n\in\integers$.  The above construction has the following simple interpretation in this case: $Z$ is constructed exactly as in the proof (using $\psi_e$, but we do not impose a cubical structure on $X_e\times [-1,1]$.  Instead, it is just a cylinder whose attaching circles $X_e\times \{\pm1\}$ are given the same orientation, and $X_e\times \{\pm1\}$ contains a finite set $\{p_i^\pm\}_i$ of points that are preimages of hyperplanes of $X_{e^\pm}$ under the attaching maps.  The map $\psi_e$ induces a bijection $\beta:\{p_i^-\}_i\to\{p_i^+\}_i$.  We extend the walls from the vertex spaces across the cylinder by joining each $p_i^-$ to $\beta(p_i^-)$ by a properly embedded arc in $X_e\times[-1,1]$ that winds around the cylinder $|n|$ times (in the same direction as the orientations of the attaching circles if $n\geq 0$ and in the opposite direction if $n<0$).  In other words, the new immersed walls in $Z$ are just the images of the ones from the proof of the proposition under a collection of Dehn twists in the edge spaces.  The resulting  $\widetilde X$ satisfies the conclusions of the proposition; the proof works verbatim for the Dehn-twisted walls.
\end{rem}

\begin{rem}[Cocompactness in Proposition \ref{prop:joining-walls}]\label{rem:splice-cocompact}
Let $(G,T,\{\widetilde X_v\}_v,\{\widetilde X_e^{\pm}\}_e,\{\psi_e\}_e)$ be a graph of cubulated groups with compatibly cubulated edge groups.  Let $\mathcal Q\subset G$ satisfy Definition \ref{defn:graph-walls}.\eqref{item:stass-Q}.  Suppose, in addition, that:
\begin{itemize}
    \item $G_v$ acts on $\widetilde X_v$ cocompactly for each $v\in\vertices(\bar T)$.
    \item For each $e\in\edges(\bar T)$, the $G_e^\pm$--invariant subcomplex $\widetilde X_e^\pm\subset \widetilde X_{e^\pm}$ is convex in $\widetilde X_e^{\pm}$.
\end{itemize}
Then $G$ acts freely \emph{and cocompactly} on the CAT(0) cube complex $\widetilde X$ satisfying the conclusions of Proposition \ref{prop:joining-walls}.  Indeed, in this situation, the cube complex $Z$ in the proof of the proposition is a finite graph of spaces where the vertex spaces and edge spaces are compact nonpositively-curved cube complexes and the attaching maps are local isometries, so by a standard argument, $Z$ is already nonpositively curved, and it suffices to take $\widetilde X=\widetilde Z$.

Moreover, if $G$ is torsion-free and the $G$--action on the Bass-Serre tree $T$ is acylindrical (so any finite collection $\mathcal Q$ of $T$--hyperbolic elements that are not proper powers automatically satisfies Definition \ref{defn:graph-walls}.\eqref{item:stass-Q}), then each $q\in\mathcal Q$ is a Morse element \cite[Thm. 1]{Sisto:morse}, and hence $\langle q\rangle$ is \emph{convex-cocompact}, i.e. it acts cocompactly on a convex subcomplex of $\widetilde X$ (the convex hull of any $q$--axis), by, for instance, \cite[Lem. 4.8]{Hagen:tuples}.  Moreover, for any $v$ and any $t\in G_v$ such that $\langle t\rangle$ is convex-cocompact on $\widetilde X_v$, Proposition \ref{prop:joining-walls}.\eqref{item:join-vertex-space} implies that $\langle t\rangle$ is still convex-cocompact on $\widetilde X$.  
\end{rem}

\subsection{Exits and projection bounds}\label{subsec:exits}
Let $(G,T,\{\widetilde X_v\}_v,\{\widetilde X_e^{\pm}\}_e,\{\psi_e\}_e)$ be a graph of cubulated groups with compatibly cubulated edge groups.  Let $\mathcal Q\subset G$ be a finite subset satisfying Definition \ref{defn:graph-walls}.\eqref{item:stass-Q}. Let $\widetilde X$ be the cube complex, and $\pi:\widetilde X\to T$ the map, from Proposition \ref{prop:joining-walls}.

\begin{defn}[Walls exit]\label{defn:walls-exit}
  The $G$--cube complex $\widetilde X$ has \emph{walls exiting $\mathcal Q$} if there exists $\chi_0<\infty$ such that $\diam_T(\pi(H)\cap \axis(q))\leq \chi_0$ for all $q\in\mathcal Q$ and all hyperplanes $H$ of $\widetilde X$.
\end{defn}

\begin{rem}\label{rem:better-exits}
In statements later where the hypothesis is that $\widetilde X$ has walls exiting $\mathcal Q$, we could use a weaker notion of walls exiting, namely that the diameter bound in Definition \ref{defn:walls-exit} applies to all $q\in\mathcal Q$ and all $H$ \emph{that cut $q$}.  However, in the settings where we are able to arrange any form of exiting at all, we get the stronger form.  So we chose the simpler definition.
\end{rem}

\begin{defn}[Projection to $\widetilde Y_q$]\label{defn:axis-projection}
 For $q\in\mathcal Q$, recall from Proposition \ref{prop:joining-walls} that $\widetilde Y_q=\pi^{-1}(\axis(q))$ is a convex subcomplex of $\widetilde X$ satisfying $\stabilizer_G(\widetilde Y_q)=\langle q\rangle$.  By convexity, for each $q\in\mathcal Q$ and $g\in G$, there is a combinatorial closest-point projection $\gate_{g,q}:\widetilde X\to g\widetilde Y_q$ such that $\gate_{gq}(gx)=g\gate_{1,q}(x)$ for all $x\in\widetilde X$.  (This is known as the \emph{gate map} in the median literature and \cite[Rem. 3.5, Rem. 3.6]{ArzhantsevaHagen} explains that this coincides with the notion of \emph{wall-projection} from \cite{HW:comb}.)
\end{defn}

The following lemma will be used later to support cubical small-cancellation arguments.

\begin{lem}[Axis projection bound]\label{lem:axis-subcomplex-overlap}
Let $(G,T,\{\widetilde X_v\}_v,\{\widetilde X_e^{\pm}\}_e,\{\psi_e\}_e)$ be a graph of cubulated groups with compatibly cubulated edge groups, with $\mathcal Q$ as in Definition \ref{defn:graph-walls}.\eqref{item:stass-Q}.  Let $\widetilde X$  and $\pi:\widetilde X\to T$ satisfy the conclusion of Proposition \ref{prop:joining-walls}.  Then there exists $\chi_1<\infty$ such that for all $q_0,q_1\in\mathcal Q$ and $g_0,g_1\in G$, either
$\diam(\pi(\gate_{g_0,q_0}(g_1\widetilde Y_{q_1})))\leq \chi_1$
or $q_0=q_1$ and $g_0g_1^{-1}\in\langle q_0\rangle$.
\end{lem}

\begin{proof}
Let $\bar\gate:T\to \pi(g_0\widetilde Y_{q_0})$ be the closest-point projection map, which exists since $\pi(g_0\widetilde Y_{q_0})$ is a subtree of $T$ and hence convex.  Since $\pi$--preimages of convex sets are convex, by Proposition \ref{prop:joining-walls}, $\pi\circ \gate_{g_0,q_0}=\bar\gate\circ\pi$.  Hence $\pi(\gate_{g_0,q_0}(g_1\widetilde Y_{q_1}))=\bar\gate(g_1\widetilde Y_{q_1})$ is either a single point, or coincides with $\pi(g_1\widetilde Y_{q_1})\cap \pi(g_0\widetilde Y_{q_0})$, in which case Definition \ref{defn:graph-walls}.\eqref{item:stass-Q} provides the desired $\chi_1$.
\end{proof}

\section{Polynomial mapping tori and their hierarchies}\label{sec:mapping-tori-background}
Fix a finite-rank free group $F$ and an element $\Phi\in\Out(F)$.  The \emph{mapping torus} $G$ of $\Phi$ is the preimage of $\langle \Phi\rangle$ in 
$\Aut(F)$, so $G\cong F\rtimes_\phi\integers$ for any representative automorphism $\phi\in \Phi$, i.e. $G$ 
has the presentation $\langle F,t\mid tft^{-1}=\phi(f),\ f\in F\rangle$. 

Fixing a free basis $S$ of $F$, let $|\bullet|_S$ denote word length in $F$.  
For $g\in F$, let $\ell_S(g) = \min\{|hgh^{-1}|_S:h\in F\}$ be the conjugacy 
length of $g$.  For $n\geq 0$, and $\phi\in \Phi$, note that 
$\ell_S(\phi^n(g))$ is independent of the choice of $\phi$, and we denote this 
quantity $\ell_S(\Phi^n(g))$.  

\begin{defn}\label{defn:polynomial-growth-rate}
The \emph{growth rate} of $\Phi$ is 
\emph{polynomial of degree $d$} if for all nontrivial $g\in F$ there exists 
$d'\leq d$ and $A,B>0$ such that
$$An^{d'}\leq \ell_S(\Phi^n(g))\leq Bn^{d'}$$
and $d$ is infimal such that the above holds.  If $d=1$, we say 
that $\Phi$ has \emph{linear growth} and if $d>1$, we say $\Phi$ has 
\emph{superlinear growth}.  
\end{defn}

\begin{defn}\label{defn:UPG}
Let $f$ be the rank of $F$.  If the image of $\Phi$ in 
$GL_r(\integers)$ is unipotent, then $\Phi$ is \emph{UPG} (short for \emph{unipotent polynomially growing}).
\end{defn}

\begin{rem}[UPG case]\label{rem:make-upg}
For any $k>0$, the mapping torus of $\Phi^k$ has finite index in $G$, so if $\Phi$ has polynomial growth, then $\Phi^k$ 
is polynomial of the same growth rate \cite{Macura:detour}.  
Moreover, by \cite[Cor. 5.7.6]{BFH:tits-I}, there exists 
$k=k(\rank(F))>0$ such that $\Phi^k$ is UPG. By Lemma \ref{lem:star}, for cubulating $G$, it suffices to consider finite-index subgroups of $G$, so we often assume $\Phi$ is UPG.  
\end{rem}

\subsection{Relative train track representatives}\label{subsubsec:RTTM}
If $P\to\Gamma$ is a combinatorial path in a graph, then $[P]$ denotes the \emph{tightening} of $P$, i.e. the immersed 
path homotopic rel endpoints to $P$. 

\begin{defn}[Nielsen path]\label{defn:Nielsen}
Let $f:\Gamma\to\Gamma$ be a continuous map taking vertices to vertices.  A \emph{Nielsen path} is a path $P$ in $\Gamma$ such that 
$[f(P)]=P$, and an \emph{indivisible Nielsen path} is a Nielsen path $P$ such that either $P$ is an edge fixed pointwise by $f$, or $P$ cannot be expressed as the concatenation of two nontrivial Nielsen paths.  A path $P$ such that $[f^k(P)]=P$ for some $k>0$ is a \emph{periodic Nielsen path} and $k$ is its \emph{period}.
\end{defn}

Combining \cite[Thm. 5.1.5]{BFH:tits-I} and \cite[Prop. 3.12]{BFH:kolchin} yields well-behaved representatives of UPG automorphisms as graph maps:

\begin{prop}[Improved relative train tracks for UPGs]\label{prop:basic-rttm}
Let $F$ be a free group with $\rank(F)<\infty$ and let $\Phi\in \Out(F)$ be a 
UPG automorphism.  Then there exists a finite connected graph 
$\Gamma$ and a homotopy equivalence $f:\Gamma\to\Gamma$ such that all of the 
following hold:
\begin{enumerate}
 \item There is a sequence of $f$--invariant subgraphs 
$\emptyset=\Gamma_0\subset 
\Gamma_1\subset\cdots\subset\Gamma_r=\Gamma$.\label{item:filtration}

\item For $1\leq i\leq r$, there is an edge $E_i$ of $\Gamma$ such that 
$E_i=\Gamma_i-\interior{\Gamma_{i-1}}$.

\item For $1\leq i\leq r$, there is a (possibly trivial) immersed closed 
combinatorial path $P_i$ in $\Gamma_{i-1}$ such that $f(E_i)=E_iP_i$. 
\label{item:edge-image}

\item \label{item:vertex-preserved}  In particular, $f(v)=v$ for all $v\in 
\vertices(\Gamma)$.

\item For each $v\in\vertices(\Gamma)$, there is an isomorphism 
$\tau_v:\pi_1(\Gamma,v)\to F$ such that the induced isomorphism 
$f_*:\pi_1(\Gamma,v)\to\pi_1(\Gamma,v)$ has the property that 
$\phi_v=\tau_v\circ f_*\circ\tau_v^{-1}$ is a representative of the outer 
automorphism $\Phi$.

\item All periodic Nielsen paths have period $1$.

\item $f$ is a \emph{UR map} in the sense of \cite[Def. 3.13]{BFH:kolchin}.
\end{enumerate}
We call $f:\Gamma\to\Gamma$ an \emph{improved relative train track 
representative} of $\Phi$.
\end{prop}

We do not need the exact definition of a UR map; we include it above to emphasise that a map satisfying the conclusion of the proposition satisfies the hypotheses of various statements from \cite{BFH:kolchin} that we will apply later.

We allow the $\Gamma_i$ to be disconnected (and view the restrictions of $f$ to the components as relative train track representatives of UPG automorphisms of the fundamental groups of the components).  If $P_i$ is nontrivial, then $E_i$ necessarily ends on $\Gamma_{i-1}$, so adding $E_i$ does not increase the number of components.

Fix a UPG $\Phi$ and let $f:\Gamma\to\Gamma$ be an 
improved relative train track representative.  Define a relation $\preceq$ on 
$\edges(\Gamma)$ by $E\preceq E'$ if the combinatorial path $f(E')$ traverses 
the edge $E$ (in either direction).  Observe that $\preceq$ is a partial 
order.  Note that if $1\leq i<j\leq r$, then $E_j$ is not an edge of $\Gamma_{i-1}$, 
so we cannot have $E_j\preceq E_i$.  So, the total order on the edges given by 
$E_i<E_j$ if $i<j$ is obtained from $\preceq$ by breaking ties.  So, after relabelling, there exists $\ell\leq r$ such that the 
edge $E_i$ is $f$--\emph{invariant}, i.e. $P_i$ is the trivial path, if and 
only if $1\leq i\leq \ell$.  The edge $E$ has \emph{growth rate $d(E)$} if $\frac{|[f^n(E)]|}{n^{d(E)}}\in[a,b]$ for all $n\geq 1$, where 
$0<a\leq b$. 

\begin{lem}\label{lem:stratify-linear-superlinear}
Up to relabelling, there exist $1\leq \ell\leq m\leq r$ such that
\begin{itemize}
 \item $E_1,\ldots,E_\ell$ are 
invariant, i.e. $d(E_i)=0$ for $i\leq\ell$;
\item $E_{\ell+1},\ldots,E_m$ are \emph{linear edges}, i.e. 
they satisfy $d(E_i)=1$ for $\ell+1\leq i\leq m$, and
\item $E_{m+1},\ldots,E_r$ are \emph{superlinear 
edges}, i.e. they satisfy $d(E_i)>1$ for $m+1\leq i\leq r$.
\end{itemize}
Moreover, $\Phi$ has at most linear growth if and only if $r=m$.
\end{lem}

\begin{proof}
If $E_i$ is invariant and $E_j$ is not, then 
$E_j\not\preceq E_i$, so after relabelling, $i<j$.  We now show that if 
$d(E)=1$ and $d(E')>1$, then $E'\not\preceq E$, so we can interchange $E,E'$ in the filtration of $\Gamma$ without affecting the conclusion of Proposition \ref{prop:basic-rttm}.  

Suppose to the contrary that $E$ maps over $E'$, so $f(E)=E\cdot P$, where the 
closed immersed path $P$ does not traverse $E$, but $P=A\cdot E'\cdot B$, for 
some paths $A,B$. 
By \cite[Prop. 4.10]{BFH:kolchin}, for any $n>0$, there exists 
$k>0$ such that for all $L\geq k$, the immersed path $[f^L(P)]$ (or its 
inverse) contains the subpath $E'\cdot P'\cdot[f(P')]\cdots[f^n(P')]$, where 
$P'$ is the path provided by Proposition \ref{prop:basic-rttm} such that 
$f(E')=E'\cdot P'$.  Here we used that $P'$ cannot be a Nielsen path since otherwise $[f^k(E')]=E'\cdot (P')^k$, 
contradicting that $d(E')>1$.  So $[f^L(E)]$ has length at least $n(L-k)$ for 
all $L>k$.  Hence $\lim_L|[f^L(E)]|/L\geq n$, and thus $d(E)>1$, a contradiction.

Thus we can relabel so that $E_1,\ldots,E_\ell$ are invariant, 
$E_{\ell+1},\ldots,E_m$ are linear, and $E_{m+1},\ldots,E_r$ are superlinear.  If $r=m$, then all conjugacy classes grow 
at most linearly, i.e. $\Phi$ has linear growth.  The converse is 
another application of \cite[Prop. 4.10]{BFH:kolchin}.
\end{proof}

\begin{conv}\label{conv:edge-labels}
We always assume that 
improved relative train track representatives have their edges numbered as in Lemma \ref{lem:stratify-linear-superlinear}.
\end{conv}

\subsection{Cyclic hierarchy in the superlinear case}\label{subsec:topmost-edge-splittings}
We now recall the cyclic hierarchy for a superlinear UPG mapping torus $G$, following the discussions in \cite{Macura:detour,Hagen:thickness,AndrewHughesKudlinska}. Let $\Phi$ be a UPG element of $\Out(F)$ and let 
$f:\Gamma\to\Gamma$ be an improved relative train track map representing $\Phi$.  Suppose $\Phi$ has superlinear growth, and recall that 
$E_r$ is the last edge in the filtration of $\Gamma$ from Proposition 
\ref{prop:basic-rttm}.  The superlinear edges are $E_{m+1},\ldots,E_r$.  

Let $E_{m+1},\ldots,E_{m_1}$ be the $\preceq$--minimal superlinear edges; let 
$E_{m_1+1},\ldots,E_{m_2}$ be the $\preceq$--minimal edges among $E_j$ with 
$j>m_1$.  Continuing in this way gives $s:=m_p\leq r$ such that 
$E_s,\ldots,E_r$ are pairwise $\preceq$--incomparable (and are all 
$\preceq$--maximal) superlinear edges, and for $s\leq j\leq k$, the following 
holds.  Recall that $f(E_j)=E_jP_j$.  Then $P_j$ must traverse at least $p-1$ 
superlinear edges (namely all those in a largest $\preceq$--chain not involving $\preceq$--maximal edges).  We say 
that $s$ was chosen using a \emph{greedy clique sequence} for the partial order 
$\preceq$.  If $p=0$, then the superlinear edges are all incomparable, but each one must map over some linear edge.

As described in \cite{Macura:detour,Hagen:thickness,AndrewHughesKudlinska},  $G=F\rtimes_\Phi\integers$ decomposes as a finite 
graph of groups:
\begin{enumerate}
 \item The underlying graph $\Delta_r$ is obtained from $\Gamma_r=\Gamma$ by 
collapsing each component of 
$\Gamma-\bigcup_{i=s}^r\interior{E_i}$.  Identify $\edges(\Delta_r)$ with 
$\{E_s,\ldots,E_r\}$ in the obvious way.

\item For $\bar v\in\vertices(\Delta_r)$, let $v\in\vertices(\Gamma)$ be an 
arbitrary vertex in the component $\Gamma^v$ of 
$\Gamma-\bigcup_{i=s}^r\interior{E_i}$ that collapses to $\bar v$.  Since $f$ 
fixes every vertex, by Proposition \ref{prop:basic-rttm}, and since no edge in 
$\Gamma^v$ can map over any of $E_s,\ldots,E_r$, the map $f$ restricts to a map 
$f:\Gamma^v\to\Gamma^v$ that is an improved relative train track map 
representing the automorphism $\phi^v$ obtained by restricting $\phi_v$ to the 
free factor  $F^v=\pi_1(\Gamma^v,v)$ of $\pi_1(\Gamma,v)$.  We let 
$G_{\bar v}=F^v\rtimes_{\phi^v}\langle t_v\rangle$.  This describes the vertex 
groups in the splitting of $G$.

\item Orient the edges of $\Delta_r$.  For $s\leq i\leq r$, the edge group 
$G_{E_i}$ is an infinite cyclic group generated by an element that we denote 
$T_{E_i}$.  Let $\bar v_i^+,\bar v_i^-$ denote the terminal and initial vertices 
of $E_i$ regarded as an edge of $\Delta_r$, and let $v_i^\pm\in\Gamma^{v_i^\pm}$ 
be the lifts to $\Gamma$ described above.  Let $w_i^\pm\in \Gamma^{v_i^\pm}$ be 
the endpoints of the oriented edge $E_i$ of $\Gamma$.  Let $\alpha_i^\pm$ be a combinatorial path in 
$\Gamma^{v_i^\pm}$ from $v_i^\pm$ to $w_i^\pm$.  Then edge maps 
$I_i^\pm:\langle T_{E_i}\rangle\to G_{\bar v^\pm_i}$ are:
$$I_i^-(T_{E_i})=t_{v_i^-}\text{ \ and \  }I_i^+(T_{E_i})=[\alpha^+_iP_if(\alpha_i^+)^{-1} t_{v_i^+}].$$
\end{enumerate}

The fundamental group $\pi_1(\Delta_r,\{G_v\},\{G_{E_i}\},\{I_i^\pm\})$ of this 
graph of groups is isomorphic to $G$, and this decomposition is called the 
\emph{top strata decomposition} of $G$.

\begin{rem}[Edge groups and the 
free-by-$\integers$ structure]\label{rem:edge-groups-relate-to-fibres}
Let $\pi:G\to\integers$ be the quotient with kernel $F$.  Then $\pi(I_i^\pm(T_{E_i}))=1$ for each $E_i$.  
\end{rem}

\begin{rem}[Superlinear hierarchy]\label{rem:topmost-edge-vertex-groups}
Each $G_v=F^v\rtimes_{\phi_v}\langle t_v\rangle$ is a UPG polynomially growing 
automorphism of the free group $F^v$ with $f:\Gamma^v\to\Gamma^v$ an improved 
relative train track representative, and one of the following holds:
\begin{itemize}
 \item the image of $\phi_v$ in $\Out(F^v)$ is linearly growing or trivial, or
 \item there is a top strata decomposition of $G_v$ using a nonempty 
$\prec$--maximal set of superlinear edges of $\Gamma^v$, exactly as above.
\end{itemize}
Since each decomposition reduces the number of edges, $G_v$ has 
a hierarchy where all graph of groups decompositions at each level of the 
hierarchy are as described above.  The hierarchy terminates in groups 
$F^v\rtimes_{\Phi'}\integers$, where either $\Phi'\in\Out(F^v)$ is trivial --- 
i.e. the terminal vertex group is $F^v\times\integers$ --- or $\Phi$ is a 
linearly growing UPG.  In summary:
\begin{enumerate}[({Hier. }1)]
 \item There is a sequence $\Delta_0,\Delta_1,\ldots,\Delta_n$ of finite graphs, 
with $\Delta_n$ connected.
 \item $\Delta_n$ is the underlying graph of the top strata decomposition of $G$.
 
 \item For $0\leq i<n$, there is a bijection from  components of 
$\Delta_i$ to vertices of $\Delta_{i+1}$ such that each 
component of $\Delta_i$ is the underlying graph of the top strata decomposition 
of the corresponding vertex group of $\Delta_{i+1}$, if $i\geq 1$, or a single 
vertex, if $i=0$.

\item $\edges(\Delta_0)=\emptyset$, and $G_v=F_v\rtimes_{\Phi_v}\integers$ with $d(\Phi_v)\leq 1$, for $v\in\vertices(\Delta_0)$.
\end{enumerate}
For each $e\in\edges(\Delta_i)$, with vertices $v$ and $w$, the image of the edge group in $F^v\rtimes\integers$ is a cyclic subgroup that is elliptic in the topmost edge splitting of $F^v\rtimes\integers$, but the image of the edge group in $F^w\rtimes\integers$ is hyperbolic in the topmost edge splitting of $F^w\rtimes\integers$ (up to swapping $v,w$).
\end{rem}

\begin{lem}[Superlinear acylindricity]\label{lem:topmost-edges-acylindrical}
Let $\Delta_n$ be the top strata decomposition of $G$, as in Remark \ref{rem:topmost-edge-vertex-groups}.  Suppose that $n>0$, so that the edges $E_s,\ldots,E_r$ in $\Delta_n$ are superlinear.  Let $T$ be the Bass-Serre tree.  Then the $G$--action on $T$ is $2$--acylindrical (i.e. vertices at distance at least $3$ have trivially-intersecting stabilisers).
\end{lem}

\begin{proof}
Observe that the splitting of $G$ in the statement is exactly the output of \cite[Prop. 2.5]{AndrewHughesKudlinska}, so by \cite[Lem. 5.2]{KudlinskaValiunas}, the action of $G$ on $T$ is $2$--acylindrical.
\end{proof}

\subsection{Acylindrical splitting over $\integers^2$ in the linear case}\label{subsec:acylindrical-splitting-linear}
Let  $\Phi\in \Out(F)$ be a UPG that is at most linearly-growing.  We begin with a well-known splitting of $F\rtimes_\Phi\integers$ over $\integers^2$ edge groups; compare \cite[Prop. 5.2.2]{AndrewMartino:splitting}, \cite[Thm. 2.7]{DahmaniTouikan}, and \cite[Lem. 4.4]{AndrewHughesKudlinska}.

\begin{prop}[Linear mapping torus decomposition]\label{prop:split-over-tori}
Let $\Phi$ be a linearly growing UPG outer automorphism of the finite rank free group $F$ and let $G=F\rtimes_\Phi\langle t\rangle$ be its mapping torus.  Then there is a finite connected graph $\Delta$ such that $F$ and $G$ split as graphs of groups with underlying graph $\Delta$, and the following all hold:
\begin{enumerate}
\item For each $v\in\vertices(\Delta)$, the vertex group $F_v$ has finite rank at least $1$, and $G_v=F_v\times \langle t_v\rangle$, where $t_v\in Ft$.  If $\rank(F_v)>1$, then $F_v\times\langle t_v\rangle$ is a maximal free-times-$\integers$ subgroup of $G$.  If $\rank(F_v)=1$, then $F_v\times\langle t_v\rangle$ is a maximal $\integers^2$ subgroup of $G$.\label{item:linear-vertices}

\item If $v_0,v_1$ are distinct vertices and $\rank(F_{v_i})>1$ for $i\in\{0,1\}$, then $t_{v_0}\neq t_{v_1}$.\label{item:different-vertex-groups}

\item For each $e\in\edges(\Delta)$, the group $F_e=\langle r_e\rangle$ is a maximal infinite cyclic subgroup of $F$ and $G_e=\langle r_e\rangle \times\langle t_e\rangle$.  Let $r_e$ denote the image of $r_e$ in $F_{e^-}$ and let $p_e$ denote its image in $F_{e^+}$.  Then the image of $t_e$ in $G_{e^-}$ is $t_{e^-}$ and the image of $t_e$ in $G_{e^+}$ is $p_e^{k(e)}t_{e^+}$ for some $k(e)\in\integers$.\label{item:linear-edges}

\item If $e\in\edges(\Delta)$ and $v\in\vertices(\Delta)$ is incident to $e$, then the centre of $G_v$ is contained in $\image(G_e\to G_v)$.\label{item:linear-centre}  

\item The outer automorphism $\Phi$ has a representative preserving the graph of groups decomposition $\Delta$ of $F$, acting as a multitwist automorphism in the sense of \cite[Sec. 6]{CohenLustig}.\label{item:linear-dehn}

 \item $\Delta$ is bipartite, and $v\in\vertices(\Delta)$ is \textbf{black} if $F_v$ has rank $1$ and \textbf{white} otherwise, all edges are oriented from their black to their white vertex, and edge monomorphisms to black vertices are surjective; if $v$ is black, then $F_v=\langle r_v\rangle$ for some infinite-order $r_v$ and $r_e$ is sent by the edge map to $r_v$ for each $e$ with $e^-=v$.\label{item:linear-bipartite}

\item Letting $T$ be the Bass-Serre tree for the above decomposition of $G$ as a graph of groups, the action of $G$ on $T$ is $4$--acylindrical.  \label{item:linear-acylindrical}

\item Let $b\in\vertices(\Delta)$ be black and let $v,v'\in\vertices(\Delta)$ be white vertices joined to $b$ by distinct edges $e,e'$.  Then $\langle t_{b}r_e^{-k(e)}, t_{b}r_{e'}^{-k(e')}\rangle$ has finite index in $G_b$, so $k(e)\neq k(e')$.\label{item:linear-finite-index}

\item If $\tilde v\in\vertices(T)$ is white and $\tilde b,\tilde b'\in\vertices(T)$ are distinct black vertices adjacent to $\tilde v$, then $\stabilizer_G(\tilde b)\cap\stabilizer_G(\tilde b')$ is cyclic.\label{item:linear-cylic-intersection}
 
\end{enumerate}
\end{prop}

\begin{proof}
The existence of the graph of groups decomposition satisfying items \eqref{item:linear-vertices},\eqref{item:different-vertex-groups},  \eqref{item:linear-edges}, \eqref{item:linear-centre} is given by \cite[Thm. 2.7, Cor. 2.9, Prop. 3.4]{DahmaniTouikan} or by \cite[Prop. 5.2.2]{AndrewMartino:splitting} (with some of the listed properties also made explicit in \cite[Lem. 4.4]{AndrewHughesKudlinska}).  The fact that $\Phi$ can be realised as a Dehn twist on this graph of groups (item \eqref{item:linear-dehn}) is given by \cite[Thm. 2.7]{DahmaniTouikan} and also follows from the proof of \cite[Prop. 5.2.2]{AndrewMartino:splitting}.  Acylindricity (item \eqref{item:linear-acylindrical}) for the splitting from \cite{AndrewMartino:splitting} is proved in \cite[Lem. 4.4]{AndrewHughesKudlinska}, and the alternate proof from \cite{DahmaniTouikan} gives an acylindrical action by \cite[Prop. 3.2, Prop. 3.4]{DahmaniTouikan}.  The $2$--colouring and orientation (item \eqref{item:linear-bipartite}) follows the colour convention from \cite{DahmaniTouikan}.  Items \eqref{item:linear-finite-index}, \eqref{item:linear-cylic-intersection} follow from \cite[Prop. 3.2, Prop. 3.4]{DahmaniTouikan}.
\end{proof}

\begin{rem}[Explicit multitwist]\label{rem:multi-twist}
Let $\hat F'=\left(\Asterisk_{v\in\vertices(\Delta)} F_v\right)*\left(\Asterisk_{e\in\edges(\Delta)}\langle e\rangle\right),$
and let $\hat F$ be the quotient obtained by imposing the relations $ep_ee^{-1}=r_e$.  Let $\hat\phi:\hat F'\to\hat F'$ be the automorphism which is the identity on each $F_v$, and for each $e$, satisfies $\hat\phi(e)=ep_e^{k(e)}$.  Since $\hat \phi(r_e)=r_e$ and $\hat\phi(ep_ee^{-1})=ep_ee^{-1}$ for each $e$, this descends to an automorphism $\hat\phi$ of $\hat F.$  By Proposition \ref{prop:split-over-tori}, there is a choice of vertex $v\in \Delta$ such that $F$ is the fundamental group of the graph $\Delta$ of groups with the vertex/edge groups given in the proposition; $F$ naturally embeds as a $\hat\phi$--invariant subgroup of $\hat F$, by identifying vertices in $\Delta$ to a single vertex.  The restriction of $\hat\phi$ to $F\leq \hat F$ is an automorphism representing the outer autormorphism $\Phi$.  

Now, \cite[Thm. 2.7]{DahmaniTouikan} makes $\hat\phi$ a \emph{full one-sided substitution}: if $v\in \vertices(\Delta)$ is black and $e_1,e_2$ are distinct edges joining $v$ to   vertices $w_1,w_2$, respectively, then  $$e_1^{-1}e_2\neq p_{e_1}^{-k(e_1)}e_1^{-1}e_2p_{e_2}^{k(e_2)}$$
(as elements of $\hat F$).  This makes clear why $k(e_1)$ and $k(e_2)$ cannot both be $0$.

Consider the action of $G$ on $T$, and pull back the vertex colours from $\Delta$ to $T$.  If $w\in \vertices(T)$ is white, with stabiliser $G_w=F_w\times \langle t_w\rangle$, then for all $n\in\integers-\{0\}$, the element $t_w^n$ fixes exactly the vertices in $T$ at distance at most $2$ from $w$, by \cite[Prop. 3.4]{DahmaniTouikan} and, if $w\in\vertices(\Delta)$ is white and $e_1,e_2$ are distinct edges incident to $w$, then $\langle p_{e_1}\rangle \cap\langle p_{e_2}\rangle=\{1\}$.
\end{rem}

\subsubsection{Linear-growth topological representatives}\label{subsec:linear-top-rep}
It will be useful to work with an explicit graph of spaces corresponding to the splittings of $F$ and $G$ from Proposition \ref{prop:split-over-tori}.  Keeping the notation of that proposition, we define a graph of spaces in the usual way, with  additional properties extracted from the proof of \cite[Thm. 2.7]{DahmaniTouikan}:
\begin{enumerate}[(A)]
    \item The underlying graph is $\Delta$.  Let $\vertices_\bullet(\Delta)$ be the set of black vertices and $\vertices_\circ(\Delta)$ the white vertices.  No element of $\vertices_\bullet(\Delta)$ is a leaf of $\Delta$.\label{item:multitwist-verts}

    \item For each $v\in \vertices_\bullet(\Delta)$, the vertex space $X_v$ is a graph homeomorphic to $S^1$, with one vertex $x_v$.  We identify $F_v$ with $\pi_1(X_v,x_v)$. \label{item:multitwist-vertspace-black}

    \item For each $w\in\vertices_\circ(\Delta)$, the vertex space $X_w$ is a finite graph with a base vertex $x_w$.  We identify $F_w$ with $\pi_1(X_w,x_w)$. \label{item:multitwist-vertspace-white}

    \item For each $e\in\edges(\Delta)$ --- recall that $e^-\in\vertices_\bullet(\Delta)$ and $e^+\in\vertices_\circ(\Delta)$ --- let $\iota_e^-:X_{e^-}\times\{-1\}\to X_{e^-}$ be the identity and let $\iota_{e}^+:X_{e^-}\times\{+1\}\to X_{e^+}$ be obtained by subdividing $X_{e^-}\times\{+1\}$ so it is a combinatorial circle, and then mapping to the graph $X_{e^+}$ by a combinatorial immersion.  We let $p_e\to X_{e^+}$ be the immersed, closed, nontrivial combinatorial path that represents a conjugacy class of maximal cyclic subgroups in $F_{e^+}$ and has the property that $\iota_{e}^+$ corresponds to $p_e$.  (The paths $p_e$ come from improved relative train track maps, as explained in \cite{DahmaniTouikan}, and the key point is that they are immersed \emph{circuits}, i.e. immersions of $S^1$, not just immersed closed paths.)  These maps induce the edge monomorphisms from Proposition \ref{prop:split-over-tori}.  We let $r_e\to X_{e^-}$ be the path corresponding to $\iota_e^-$, i.e. the identity map on an oriented $1$--cycle. \label{item:multitwist-edges}

    \item If $w\in\vertices_\circ(\Delta)$ and $e_1,e_2$ are distinct edges terminating at $w$, then the immersed circuits $p_{e_1}\to X_w$ and $p_{e_2}\to X_w$ have the property that each component of the fibre product $p_{e_1}\otimes_{X_w}p_{e_2}$ is contractible. \label{item:multitwist-no-copy-p}

    \item The total space $Y$ is formed from $X:=\bigsqcup_{v\in\vertices(\Delta)}X_v$ by attaching, for each edge $e$, the cylinder $X_{e^-}\times[-1,1]$ using the given attaching maps $\iota_e^\pm$.  This makes $Y$ a $2$--complex whose $1$--skeleton consists of $X$, together with a copy of $\edges(\Delta)$.  Moreover, $\pi_1Y\cong F$. \label{item:multitwist-total}
\end{enumerate}

Define a continuous map $\varphi:Y\to Y$ to be the identity on $X$, and on each cylinder $X_{e^-}\times [-1,1]$, let $\varphi$ act as a $k(e)$--fold Dehn twist (in the direction of the orientation of $p_e$).  Then:
\begin{enumerate}[(A)]
\setcounter{enumi}{6}
    \item $\varphi$ induces the automorphism of $F\cong\pi_1Y$ from Remark \ref{rem:multi-twist}, i.e. a Dehn twist representing the outer automorphism $\Phi$. \label{item:multitwist-dehn}

    \item Let $M_\varphi$ be the mapping torus of $\varphi$.  Then $\pi_1M_\varphi\cong G=F\rtimes_\Phi\langle t\rangle$. \label{item:multitwist-mapping-torus}

    \item For each vertex $v\in\vertices_\bullet(\Delta)$ and distinct edges $e_1,e_2$ originating at $v$, the path $e_1^{-1}e_2$ is not path-homotopic in $Y$ to the path $\varphi(e_1^{-1}e_2)$.  (These paths have the same endpoints since $\varphi$ fixes $X\subset Y$ pointwise.)  This follows from the full one-sidedness condition in Remark \ref{rem:multi-twist}, and the same holds when $\varphi$ is replaced by any nonzero power. \label{item:multitwist-full}

    \item For each $v\in\vertices(\Delta)$, let $S_v$ be a $1$--cycle and let $M^v=X_v\times S_v$.  (So, $M^v$ is a torus when $v\in\vertices_\bullet(\Delta)$.)  Then $M_\varphi$ (is homotopy equivalent to a space that) splits as a graph of spaces with underlying graph $\Delta$, and:
    \begin{itemize}
        \item For each $v\in\vertices(\Delta)$, the vertex space is $M^v$.
        \item For each $e\in \edges(\Delta)$, the edge space is $X_{e^-}\times S_{e^-}=M^e$.
        \item We attach $M^e\times[-1,1]$ by identifying $M^e\times \{-1\}$ with $X_{e^-}\times S_{e^-}$ using the identity map, and gluing $M^e\times \{+1\}\cong X_{e^-}\times S_{e^-}$ to $X_{e^+}\times S_{e^+}$ using the map $I_e$ defined as follows.  The torus $X_{e^-}\times S_{e^-}$ is spanned by the loops $r_e$ (traverse $X_{e^-}$ once in the positive direction) and $t_e$ (traverse $S_{e^-}$ once in the positive direction).  Then there is a homeomorphism $X_{e^-}\times S_{e^-}\to S^1\times S^1$ given by sending $r_e$ to the vector $\colvec{0}{1}$ and $t_e$ to $\colvec{1}{0}$.  Now map $S^1\times S^1$ to $X_{e^+}\times S^{e^+}$ by first applying the homeomorphism $S^1\times S^1\to S^1\times S^1$ given by $\begin{pmatrix}
            1&0\\
            k(e)&1
        \end{pmatrix}$
        and then the map $p_e\times \mathrm{id}$.\label{item:multitwist-decomp}
    \end{itemize}
When we construct $G$--actions on cube complexes, we will do so by building immersed walls in $M_\varphi$ using this graph of spaces.
\end{enumerate}

\begin{notation}\label{notation:P-R}
Let $\mathcal R$ be the multiset of combinatorial cycles $r_e\to X$ for $e\in\edges(\Delta)$.  We say ``multiset'' to emphasise that $r_e$ and $r_{e'}$ are the same cycle whenever $e,e'$ are edges with a common (necessarily black) initial vertex.  Since each black vertex has at least two outgoing edges (recall that $\Delta$ has no black leaf), each $r_e$ \emph{always} coincides with $r_{e'}$ for some $e\neq e'$. If $e,e'$ are edges with distinct initial vertices, then $r_e$ and $r_e'$ lie in distinct vertex spaces.

Let $\mathcal P$ be the set of immersed circuits $p_e\to X$ for $e\in\edges(\Delta)$.  Note that $p_e\to X$ factors through the inclusion $X_{e^+}\to X$, so if $e,e'$ are edges with distinct terminal (necessarily white) vertices, then $p_e,p_{e'}$ have disjoint images.  If $e^+=(e')^+=v$ for some (white) vertex $v$, then $p_e\neq p_{e^+}$ (since $p_{e}\otimes_{X_v}p_{e'}$ has contractible components).
\end{notation}

\subsubsection{Future attaching elements}\label{subsec:future-attaching}
For later use (when the linear-growth case forms the base of an induction on the superlinear hierarchy), we record the following definition and lemma:

\begin{defn}\label{defn:linear-hyperbolics}
Let $\mathcal Q$ be a finite set of elements of $G$ of the form $qt$, where $q\in F$, such that each $qt$ acts hyperbolically on the Bass-Serre tree $T$.  We call $\mathcal Q$ a set of \emph{future attaching elements} provided no two distinct elements of $\mathcal Q$ are conjugate in $G$.
\end{defn}

As usual, a group $E$ is \emph{elementary} if it is virtually cyclic (including the case where $|E|<\infty$).

\begin{lem}\label{lem:future-attaching-acyl}
Let $\mathcal Q$ be a set of future attaching elements in $G$.  Then there exists $\chi<\infty$ such that for all $qt,q't\in\mathcal Q$ and $g\in G$, the following hold:
\begin{itemize}
    \item Let $A_{qt}$ be the axis of $qt$ in $T$.  Then $\langle qt\rangle$ is a maximal elementary subgroup of $G$, and $\stabilizer_G(A_q)=\langle qt\rangle$.
    \item   Either $gA_{qt}\cap A_{q't}$ has diameter bounded by $\chi$, or $gqtg^{-1}=q't$.  In the latter case, $q=q'$ and $g\in\langle qt\rangle$.
\end{itemize}
\end{lem}

\begin{proof}
Recall that $G$ acts on $T$ acylindrically, by Proposition \ref{prop:split-over-tori}, so $\stabilizer_G(A_q)$ is a maximal elementary subgroup, and it is cyclic since $G$ is torsion-free.  Since $q\in F$, the element $qt$ cannot be a proper power in $G$, and therefore $\langle qt\rangle$ is a maximal cyclic subgroup, and hence coincides with $\stabilizer_G(A_q)$.  Acylindricity and finiteness of $\mathcal Q$ also provide $\chi$ such that $\diam_T(gA_{qt}\cap A_{q't})>\chi$ implies $gA_{qt}=A_{q't}$, so $\langle gqtg^{-1}\rangle=\langle q't\rangle$, and by considering the retraction $G\to\langle t\rangle$ with kernel $F$ we find $gqtg^{-1}=q't$.  Since distinct elements of $\mathcal Q$ are not conjugate in $G$, we have $q'=q$ and, since the maximal elementary subgroup $\langle qt\rangle$ is self-normalising (by acylindricity), $g\in\langle qt\rangle$.
\end{proof}

\begin{lem}\label{lem:cubulate-linear-finite-index}
Let $\mathcal Q\subset G$ be a (possibly empty) set of future attaching elements.  Let $\mathcal Q_1$ be an elevation (see Definition \ref{defn:finite-set-elevation}) of $\mathcal Q$ to some finite-index subgroup $G_1\leq G$.

Then for each $g\in \mathcal Q_1$, $\langle g\rangle$ is a maximal elementary subgroup of $G_1$ and $\langle g\rangle=\stabilizer_{G_1}(A_g)$, where $A_g$ is the axis of $g$ on $T$.  Moreover, if $g,g'\in \mathcal Q$, then any two $G_1$--translates of $A_g,A_{g'}$ either coincide, or their intersection has diameter at most $\chi$.

Suppose, moreover, that $G_1$ acts freely on a CAT(0) cube complex $C_1$ in such a way that $\mathcal Q_1\cup \mathcal S$ is wall-independent, where $\mathcal S$ is an elevation of $\{t\}$ to $G_1$, and $\|g\|_{C_1}=\|s\|_{C_1}$ for all $g\in\mathcal Q_1$ and $s\in\mathcal S$.  Then $G$ acts freely on a CAT(0) cube complex $C$ such that $\mathcal Q\cup\{t\}$ is wall-independent and $\|t\|_C=\|qt\|_C$ for all $qt\in\mathcal Q$.
\end{lem}

\begin{proof}
By Definition \ref{defn:finite-set-elevation}, each $g\in\mathcal Q_1$ has the property that $\langle g\rangle = \langle aqta^{-1}\rangle \cap G_1$ for some $a\in G,qt\in\mathcal Q$, so $\langle g\rangle$ is a maximal cyclic subgroup of $G_1$ by Lemma \ref{lem:future-attaching-acyl}.  By the same lemma, $\langle g\rangle=\stabilizer_{G_1}(A_g)$, and the bound on axis intersections follows.  

Now let $G_1$ act freely on the CAT(0) cube complex $C_1$ with the properties hypothesised in the statement of the lemma.  Note that $\mathcal Q_1\cup\mathcal S$ is an elevation of $\mathcal Q\cup\{t\}$.  So any $G$--conjugates of elements in $\mathcal Q\cup \{t\}$ have powers that are conjugate in $G_1$ into $\mathcal Q_1\cup\mathcal S$, so our hypotheses imply that these $G$--conjugates have the same virtual translation length on $C_1$.  Hence Lemma \ref{lem:star} provides the $G$--action on $C$ with the desired properties.
\end{proof}

\subsubsection{Big finite covers of $Y$ and $M_\varphi$ where edge spaces embed}\label{subsubsec:linear-finite-covers}

\begin{defn}\label{defn:P-embedded-starred}
Let $c:\widehat Y\to Y$ be a finite-degree cover.  Let $\widehat{\Delta}$ be the underlying graph of the decomposition of $\widehat Y$ induced by the above decomposition of $Y$ with underlying graph $\Delta$.  Then:
\begin{itemize}
    \item $c$ is \emph{$\varphi$--invariant} if the homeomorphism $\varphi:Y\to Y$ lifts to $\hat \varphi:\widehat Y\to\widehat Y$.

    \item $c$ is \emph{$\mathcal P$--clean } if for all $p_e\in\mathcal P$, every elevation $\hat p_e$ of $p_e$ to $\widehat Y$ is an embedded circuit.  (Note that this is automatically true for each elevation of each $r_e\in\mathcal R$.)

    \item $c$ is \emph{starred} if the bipartite graph $\widehat{\Delta}$ contains no $2$--cycle.
\end{itemize}
We say $Y$ is $\mathcal P$--clean  [resp. starred] if the identity  $Y\to Y$ is $\mathcal P$--clean  [resp. starred].
\end{defn}

\begin{rem}
The terminology \emph{clean} is consistent with that in \cite[Def. 4.5]{Wise:omnipotence}; we highlight $\mathcal P$ to emphasise which edge spaces  can fail to embed (each $r_e\in\mathcal R$ is always an embedding).
\end{rem}

\begin{rem}\label{rem:invariant-cover}
Let $c_1:Y_1\to Y$ be $\varphi$--invariant and $\varphi:Y_1\to Y_1$ a lift.  Then since the graph $X_1=c_1^{-1}(X)$ is finite and $\varphi$ fixes $X$ pointwise, there is $N>0$ such that $\varphi^N:Y_1\to Y_1$ fixes $X_1$ pointwise.  Hence the induced decomposition of $Y_1$ as a graph of spaces (the vertex spaces are the components of $X_1$ and the edge spaces are circles) corresponds to a graph $\Delta_1$ of groups decomposition of a $\varphi^N$--invariant finite-index subgroup $F_1$ of $F$, and $Y_1$ has all the properties \eqref{item:multitwist-verts}--\eqref{item:multitwist-total}.  The map $\varphi^N$ and its mapping torus $M_{\varphi^N}$ have all the properties \eqref{item:multitwist-dehn}--\eqref{item:multitwist-decomp}.  The fundamental group of this mapping torus is $G_1=F_1\rtimes_{\Phi^N}\langle s\rangle$, and the inclusion $F_1\to F$ and the map $s\to t^N$ induce an embedding of $G_1$ as a finite-index subgroup of $G$ (there is a natural covering map $M_{\varphi^N}\to M_\varphi$ extending the cover $c_1:Y_1\to Y$ and inducing $G_1\hookrightarrow G$).  
\end{rem}

Since $\pi_1Y$ has finite rank, there are finitely many subgroups of any given index, so:

\begin{lem}\label{lem:invariant-covers}
For any finite cover $c:\widehat Y\to Y$, there exists a finite regular cover $c_1:Y_1\to Y$ that is $\varphi$--invariant and factors through $c$.
\end{lem}

\begin{lem}\label{lem:finite-P-embedded-starred-cover}
There exists a finite regular cover $c:\widehat Y\to Y$ such that any finite cover $c_1:Y_1\to Y$ factoring through $c$ is $\mathcal P$--clean  and starred.
\end{lem}

\begin{proof}
Note that if $c$ is a $\mathcal P$--clean  cover, then so is any further finite cover, and the same is true for the property of being starred.  So it suffices to produce a $\mathcal P$--clean  starred finite cover.  Each of the finitely many $p_e\in\mathcal P$ is an immersion $p_e:\mathbb S^1\to X_{e^+}$, so $p_e\to Y$ is $\pi_1$--injective.  So by separability of $\langle p_e\rangle$ in the free group $\pi_1Y$, $p_e$ lifts (using \cite{Scott}) to an embedding in a finite cover of $Y$, and by passing to a further finite cover which is a regular cover of $Y$, the same is true for all elevations of $p_e$.  Repeating for each $p_e\in\mathcal P$ and passing to the common cover yields a finite regular cover $c_0:Y_0\to Y$ that is $\mathcal P$--clean.  (This part also follows from \cite{Wise:omnipotence}.) 

Now, for each $v\in\vertices(\Delta)$, define a complex $Z_v$ as follows.  Begin with $X_v$ and attach the edge space $X_{e}\times [-1,1]$ using the appropriate attaching map $r_e$ or $p_e$ from the construction of $Y$.  If $e$ is such that $e^\pm=v$, then attach a copy of $X_{e^\mp}$ to $X_e\times\{\mp1\}$ using the appropriate attaching map, $r_e$ or $p_e$.  The resulting complex $Z_v$ admits a natural combinatorial map $Z_v\to Y$ induced by the inclusions $X_v$, the various $X_e$, and the various $X_{e^{\mp}}$ into $Y$.  This map fails to be injective exactly when there are two distinct edges incident to $v$ that form a bigon; in this case we have edges $e,f$ such that, say, $X_{e^-}=X_{f^-}$, and the subspaces $X_{e^-},X_{f^-}$ of $Z_v$ are mapped to the same vertex space of $Y$.  Note that $Z_v\to Y$ is $\pi_1$--injective and $Z_v$ is compact.  So, again using subgroup separability of $\pi_1Y$ and \cite{Scott}, we obtain a finite cover of $Y$ where $Z_v$ embeds.  Repeating for each vertex and taking the common cover, we obtain a regular cover $c_0':Y_0'\to Y$ where all $Z_v$ embed.  Taking the cover corresponding to the normal core of the intersection of the subgroups of $\pi_1Y$ corresponding to $c_0$ and $c_0'$ gives the desired $c$.
\end{proof}

\begin{rem}\label{rem:assume-P-embedded}
By Lemma \ref{lem:finite-P-embedded-starred-cover}, Lemma \ref{lem:invariant-covers}, and Remark \ref{rem:invariant-cover}, in any situation where it is sufficient to prove that some property of the linear mapping torus $G$ holds virtually (for instance, by Lemma \ref{lem:star}, to cubulate), we can assume that $G=\pi_1M_\varphi$, with $Y$ being $\mathcal P$--clean  and starred. 
\end{rem}

\subsection{Fast and unbranched mapping tori}\label{subection:unbranched}

\begin{defn}[Fast]\label{defn:fast}
A polynomially-growing $\Phi\in\Out(F)$  is \emph{fast} if the polynomial growth rate $d$ of $\Phi$ satisfies $d=\rank(F)-1$.
\end{defn}

Using \cite[Lem. 2.16]{Macura:detour} shows $d\leq \rank(F)-1$, so the content of Definition \ref{defn:fast} is the reverse inequality. The next definition is from \cite{BGGH}:

\begin{defn}\label{defn:unbranched}
A \emph{block} in a group $G$ is a subgroup $B$ containing a finite index subgroup isomorphic to $F\times \integers$ where $F$ is a nonabelian free group.  A \emph{highest block} is a block $B$ such that for all blocks $B'$ with $[B:B\cap B']<\infty$, we have $[B':B\cap B']<\infty$. 

The group $G$ is \emph{unbranched} if for all triples $B_1,B_2,B_3$ of highest blocks such that $[B_i:B_i\cap B_j]=\infty$ for all $i\neq j$, the intersection $B_i\cap B_j$ is elementary.
\end{defn}

We use the unbranched property via the following lemma, from \cite{BGGH}:

\begin{lem}\label{lem:unbranched-char}
Let $\Phi\in\Out(F)$ be a polynomially growing automorphism with growth rate $d\geq 1$ and let $G$ be the mapping torus of $\Phi$.  Let $k>0$ be such that $\Phi^k$ is UPG, and let $G'$ be the mapping torus of $\Phi^k$.  Then $G$ is unbranched if and only if $G'$ is, and:
\begin{itemize}
    \item if $d=1$, then $G'$ is unbranched if and only if the graph $\Delta$ from Proposition \ref{prop:split-over-tori} has the property that every black vertex has valence exactly $2$;

    \item  if $d\geq 2$, then $G'$ is unbranched if and only if each terminal subgroup in the superlinear hierarchy from Remark \ref{rem:topmost-edge-vertex-groups} is unbranched.
\end{itemize}
\end{lem}

\begin{proof}
Equivalence of the unbranched property for $G,G'$ is immediate from the definition.  The assertion about linear UPG mapping tori is \cite[Lem. 4.8]{BGGH}.  The assertion about the superlinear hierarchy follows from \cite[Prop. 3.3]{BGGH} and the fact that the splittings in the superlinear hierarchy are acylindrical (Lemma \ref{lem:topmost-edges-acylindrical}).
\end{proof}

\begin{lem}\label{lem:fast-implies-unbranched}
If $\Phi$ is fast, then $G$ is unbranched.
\end{lem}

\begin{proof}
By Lemma \ref{lem:unbranched-char}, we can assume that $\Phi$ is a UPG element.  The growth rate $d$ satisfies $d\leq \rank(F)-1$.  

Suppose $d>1$ and consider the top strata decomposition.  Each vertex group $G_v$ has the form $G_v=F_v\rtimes_{\phi_v}\langle t_v\rangle$ where $\phi_v$ has polynomial growth rate $d_v$.  Moreover, $F_v$ is a free factor of $F$ and $F_v\cap F_w=\{1\}$ for $v\neq w$.   Note that $d=\max_v d_v+1.$ Let $r_v=\rank(F_v)$ and observe that
$$d\leq \sum_v(d_v+1)\leq \sum_vr_v\leq \rank(F)\leq d+1,$$
since $\Phi$ is fast.  If $\rank(F)=1+\sum_v r_v$, then there is a unique vertex $v$, and $d_v=d-1$, so $\phi_v$ is fast.  Hence, by induction, $G_v$ is unbranched, so $G$ is unbranched by Lemma \ref{lem:unbranched-char}.  If $\rank(F)=\sum_v r_v$, then there are two vertices $v,w$ joined by a unique edge that grows with polynomial growth rate $d$.  We have, say, $d_v+1=d$ and $d_w+1\leq d$, and $r_v+r_w=d+1$, and $r_v\geq d$.  If $r_v=d$, then $\phi_v$ is fast and $r_w=1$, so $G_v$ is again unbranched by induction and $G_w\cong\integers^2$ is unbranched by definition, so again $G$ is unbranched by Lemma \ref{lem:unbranched-char}.  If $r_v>d$, then $r_w=0$, contradicting that the top strata decomposition is nontrivial.  

It remains to consider the base case, $d=1$.  In this case, $\rank(F)=2$, so $\Phi$ is represented by a geometric automorphism $\phi$ which is induced by a homeomorphism $h:S\to S$ of a once-punctured torus $S$.  Passing to a power, we can assume that $h$ is a power of a Dehn twist about a curve $\alpha$ on $S$.  Letting $M$ be the mapping torus of $h$, we see that cutting $M$ along the torus $\alpha\times \mathbb S^1$ produces the splitting from Proposition \ref{prop:split-over-tori}.  Since $M$ is a manifold, the black vertex (which is unique) has valence $2$, so $G$ is unbranched by Lemma \ref{lem:unbranched-char}.
\end{proof}

\begin{cor}\label{cor:unbranched-separable}
Suppose that $d=1$ and $G$ is unbranched and $\Phi$ is UPG.  Let $\Delta$ be the graph of groups decomposition from Proposition \ref{prop:split-over-tori}.  Let $v\in\vertices(\Delta)$, let $e\in \edges(\Delta)$, and let $H\in \{G_v,G_e\}$.  Let $H'$ be a finite-index subgroup of $H$.  Then there exists a finite-index subgroup $G'\leq G$ such that $G'\cap H\leq H'$.  
\end{cor}

\begin{proof}
From Lemma \ref{lem:unbranched-char}, in the case where $d=1$, we have that $G'$ is unbranched exactly when the graph of groups from Proposition \ref{prop:split-over-tori} is a \emph{simple admissible graph of groups} in the sense of \cite[Defn. 2.2]{Nguyen:separability}.  Now apply \cite[Thm. 1.2]{Nguyen:separability}.
\end{proof}

\section{Sufficient conditions for cubulation in the linear case}\label{sec:sufficient-conditions-linear}
Let $\Phi\in\Out(F)$ be a linearly-growing element.  As usual, let $G=F\rtimes_\Phi\langle t\rangle$.

\subsection{Finite-index subgroup with $\Delta$-edge measurements}\label{subsec:linear-finite-index}
By Remark \ref{rem:make-upg}, $\Phi$ has a positive power that is UPG, so by Lemma \ref{lem:star}, we can assume that $\Phi$ is itself UPG.  Let $\Delta$ be the underlying graph of the splitting from Proposition \ref{prop:split-over-tori}, let $Y$ be the graph of spaces from Subsection \ref{subsec:linear-top-rep} (satisfying all of the listed conditions in that subsection) with $\pi_1Y=F$, let $\varphi:Y\to Y$ be the multitwist, and let $M_\varphi$ be the mapping torus; recall from the same section how $M_\varphi$ splits as a graph of spaces with underlying graph $\Delta$ (see item \eqref{item:multitwist-decomp}).  Let $\mathcal P,\mathcal R$ be as in Notation \ref{notation:P-R}.  By Lemma \ref{lem:star} and Remark \ref{rem:assume-P-embedded}, we can assume that $Y$ is $\mathcal P$--clean and starred.

\begin{defn}[Edges at a vertex]\label{defn:E-edge-sets}
For each $w\in\vertices_\circ(\Delta)$ and each $b\in\vertices_\bullet(\Delta)$, let
$$\mathbf E(w)=\{e\in\edges(\Delta):e^+=w\}\ \ \text{and }\ \mathbf E(b)=\{e\in\edges(\Delta):e^-=b\}.$$
\end{defn}

For each $w\in\vertices(\Delta)$, fix a base vertex $x_w\in X_w$ and identify $F_w$ with $\pi_1(X_w,x_w)$.  

\begin{lem}\label{lem:partition-edges}
Up to replacing $G$ by a further finite-index subgroup, we may assume the following.  For all $w\in\vertices_\circ(\Delta)$, there exists a partition
$$\mathbf E(w)=\bigsqcup_{i\in J_w}\mathbf E_i(w)$$
with all of the following properties:
\begin{enumerate}
    \item \label{item:edge-partition-coeffs}  $k(e)=k(e')$ for all $i\in J_w$ and all $e,e'\in \mathbf E_i(w)$.  Moreover, let $b=e^-,b'=(e')^-$.  Then there is a bijection $f\mapsto f'$ from $\mathbf E(b)\to \mathbf E(b')$ such that $k(f)=k(f')$.
    \item \label{item:edge-partition-maps} For all $i\in J_w$, there is a homomorphism $\pi^i_w:\pi_1(X_w,x_w)\to\integers$ such that for all $e\in \mathbf E_i(w)$, we have $\pi^i_w(g)\neq 0$ for $g\in\pi_1(X_w,x_w)$ in the conjugacy class represented by $p_e$.  
    \item \label{item:edge-partition-kernel} For $f\in\mathbf E(w)-\mathbf E_i(w)$, and $g$ in the conjugacy class represented by $p_f$, we have $\pi^i_w(g)=0$.
\end{enumerate}
\end{lem}

\begin{proof}
Fix $w$.  By \cite[Cor. 3.18]{Wise:omnipotence}, there is a finite regular cover $c:\widehat X_w\to X_w$ such that: for each $e\in\mathbf E(w)$, there is a homomorphism $\homology_1(\widehat X_w)\to \integers$ that is nonzero on each $1$--cycle represented by an elevation of $p_e$ but which vanishes on each $1$--cycle which is an elevation of $p_f$ when $f\in\mathbf E(w)-\{e\}$.  The existence of such homomorphisms persists in further finite covers. 

For each $w$, the map $\widehat X_w\to X_w\to Y$ is $\pi_1$--injective.  Since $\pi_1Y$ is free and $\widehat X_w$ is compact, subgroup separability provides a finite cover $\widehat Y_w\to Y$ such that $\widehat X_w\to X_w\to Y$ lifts to an embedding $\widehat X_w\to \widehat Y_w$.  Let $\widehat Y\to Y$ be the finite characteristic cover corresponding to the intersection over all $w\in\vertices_\circ(\Delta)$ of the characteristic cores of the subgroups $\pi_1\widehat Y_w$ of $\pi_1Y$.  Then for all $w$, every lift $X'_w\to Y'$ of $X'_w\to X_w\to Y$ has $X'_w\to X_w$ factoring through $\widehat X_w$. 

Now, $\widehat Y\to Y$ is a characteristic cover, so $\phi$ has a positive power lifting to a map decomposing as the product of Dehn twists in the edge-cylinders of $\widehat Y$.  Let $\widehat M_\phi$ be the mapping torus of this lift, so that $\widehat M_\phi$ is a finite regular cover of $M_\phi$; the corresponding subgroup $\widehat G\leq G$ is the finite-index subgroup from the statement.

Let $\widehat\Delta$ be the underlying graph of the graph of spaces decomposition of $\widehat Y$ induced by our decomposition of $Y$.  Each vertex space $\widehat X_w$ in $\widehat Y$ has its incident edges partitioned according to their images in $Y$.  This is the partition $J_w$ from the statement.  By construction of $\widehat X_w$, these partitions satisfy conditions \eqref{item:edge-partition-maps} and \eqref{item:edge-partition-kernel} from the statement.  Now, if $e,e'\in\edges(\widehat\Delta)$ are incident to a common white vertex $w$, and $e,e'\in \mathbf E_i(e)$ for some $i\in J_w$, then by the definition of the partition, there is an edge $\bar e$ of $\Delta$ such that the attaching circles in $\widehat X_w$ of the cylinders in $\widehat Y$ corresponding to the edges $e,e'$ are elevations $p_e,p_{e'}$ of the same cycle $p_{\bar e}$ in $X_w$.  Since $\widehat M_\phi\to M_\phi$ is a regular cover, we also get \eqref{item:edge-partition-coeffs}.  (The quotient $\widehat\Delta\to\Delta$ restricts to a graph isomorphism on the star of each black vertex, using Proposition \ref{prop:split-over-tori}.\eqref{item:linear-bipartite} and the fact that $Y$ is starred.)  
\end{proof}

\begin{defn}
We call the homomorphisms $\pi^i_w$ from Lemma \ref{lem:partition-edges} \emph{$\Delta$-edge measurements}.
\end{defn}

\begin{rem}\label{rem:disjoint-cycles-in-partition}
The proof of Lemma \ref{lem:partition-edges} reveals an additional fact about the $\mathbf E_i(w)$, needed in Proposition \ref{prop:exit-arrangement} but not in Proposition \ref{prop:general-linear}, namely that for all $i\in J_w$ and distinct $f,f'\in\mathbf E_i(w)$, the cycles $p_f,p_{f'}$ in $X_w$ are disjoint.  Indeed, recall that $p_f,p_{f'}$ are distinct elevations of a single embedded cycle $p_e$ in the original $X_w$ (before we passed to the cover $\widehat X_w$ and renamed it $X_w$).  Indeed, the original $p_e$ was embedded because the original $X_w$ belonged to the $Y$ that we had already arranged to be $\mathcal P$--clean.
\end{rem}

\subsection{Cubulation using good edge measurements}\label{subsec:good-edge-measurements}
Here is a sufficient condition for cubulating, phrased in terms of edge measurements:

\begin{prop}\label{prop:general-linear}
Suppose that (after passing to a suitable finite index subgroup) the $\Delta$--edge measurements $\pi^i_w$ can be chosen so that for all $w\in\vertices_\circ(\Delta)$ and $i\in J_w$, there exists $K_i\in\integers-\{0\}$ such that $\pi^i_w(g)=K_i$ for all $e\in\mathbf E_i(w)$ and all $g$ in the conjugacy class represented by $p_e$.  Then $G$ acts freely on a CAT(0) cube complex.
\end{prop}

\begin{rem}[Excuses]\label{rem:edge-measure}
The hypothesis about $\Delta$-edge measurements in Proposition \ref{prop:general-linear} is a byproduct of the proof, but we will see that it is a generalisation of the property of $G$ being tubular, so Proposition \ref{prop:general-linear} is a generalisation of the fact that tubular free-by-cyclic groups are cubulated \cite[Thm. 2.1, Thm. 2.4]{Button:tubular-cubes}.  Proposition \ref{prop:general-linear} also motivates Proposition \ref{prop:exit-arrangement}, which we will use to cubulate mapping tori of superlinear automorphisms provided the bottom-level vertex groups in the superlinear hierarchy are unbranched, with suitable edge-measurements.  We suspect the proof of Proposition \ref{prop:general-linear} can be made to work under weaker hypotheses.
\end{rem}

\begin{proof}[Proof of Proposition \ref{prop:general-linear}]
 We may assume that each $K_i>0$.  Let $K=\lcm\{K_i:i\in\bigsqcup_{w\in\vertices_\circ(\Delta)}J_w\}$.  By replacing each $\pi^i_w$ with $\frac{K\pi^i_w}{K_i}$, we can assume that $K_i=K$ for all $i$.  Choose $L\in\naturals$ such that $L>|m_i|$ for all $i$, where $m_i$ is the constant from Lemma \ref{lem:partition-edges} with $k(e)=m_i$ for all $e\in\mathbf E_i(w)$.

 \textbf{$\vertices_\circ(\Delta)$ cubulations.}  Fix $w\in\vertices_\circ(\Delta)$.  Define $\alpha_i:F_w\times \langle t_w\rangle\to\integers$ by
 $$\alpha_i(ft_w^x)=\pi^i_w(f)-K(L+m_i)x$$
 for $x\in\integers$ and $f\in F_w$.  Define $\beta_i$ in exactly the same way, except with $L$ replaced by $-L$.

 \begin{claim}\label{claim:kernel-alpha-linear}
 Let $j\in J_w$ and let $e\in \mathbf E_j(w)$.  Then $\kernel(\alpha_i)\cap \langle p_e,t_w\rangle =\langle p_e^{L+m_i}t_w\rangle$ if $i=j$ and $\kernel(\alpha_i)\cap \langle p_e,t_w\rangle=\langle p_e\rangle$ if $i\neq j$.  The same holds replacing $\alpha_i$ by $\beta_i$ and $L$ by $-L$.
 \end{claim}
\renewcommand{\qedsymbol}{$\blacksquare$}
 \begin{proof}[Proof of Claim \ref{claim:kernel-alpha-linear}]
 If $j\neq i$ and $e\in\mathbf E_j(w)$, then $\pi^i_w(p_e)=0$, so $\alpha_i(p_e)=0$.  If $i=j$, then $\alpha_i(p_e)=\pi^i_w(p_e)=K$.  Moreover, $\alpha_i(t_w)=-K(L+m_i)$, and similarly for $\beta_i$ and $-L$.
 \end{proof}

Let $\Lambda^w_i,\Omega^w_i$ be finite graphs equipped with continuous maps $\Lambda^w_i,\Omega^w_i\to X_w\times S_w$ whose compositions with the natural projection to $X_w$ are covers, and whose compositions with the natural projection to $S_w$ factor through covers, and which respectively induce the inclusions of $\kernel\alpha_i$ and $\kernel\beta_i$ in $F_w\times\langle t_w\rangle$.  Then $\Lambda^w_i$ and $\Omega^w_i$ are immersed walls in $X_w\times S_w$ and the immersions lift to embeddings $\widetilde \Lambda^w_i,\widetilde \Omega^w_i\to\widetilde X_w\times \widetilde S_w$ of universal covers whose images are walls.  Let $\widetilde C^w_i$ be the CAT(0) cube complex dual to the wallspace with underlying space $\widetilde X_w\times \widetilde S_w$ and walls the various $F_w\times \langle t_w\rangle$--translates of $\widetilde \Lambda^w_i,\widetilde \Omega^w_i$.

For each $e\in \mathbf E_i(w)$, there is an embedded torus $p_e\times S_w\to X_w\times S_w$, and the preimage of $\Lambda^w_i$ in this torus is a circle representing $p_e^{L+k(e)}t_w$, and for $\Omega^w_i$, the preimage circle represents $p_e^{-L+k(e)}t_w$.  Hence $\langle p_e,t_w\rangle$ acts on $\widetilde C^w_i$ freely, by Lemma \ref{lem:free}.

\begin{claim}\label{claim:translation-length-linear}
For all $i\in J_w$, we have $\|t_w\|_{\widetilde C^w_i}=2L$.
\end{claim}

\begin{proof}[Proof of Claim \ref{claim:kernel-alpha-linear}]
The element $t_w$ is represented by an embedded closed path $\sigma$ in $p_e\times S_w$ such that the translation length of $t_w$ is the sum of the intersection numbers, in the torus $p_e\times S_w$, of $\sigma$ with the cycles $p_e^{\pm L+k(e)}t_w$.  View the torus as the quotient of $\reals\times \reals$ by the group $\langle p_e,t_w\rangle$ of affine isometries where $p_e$ is identified with $(0,1)^T$ and $t_w$ with $(1,0)^T$.  Then
$$\|t_w\|_{\widetilde C^w_i}=\left|\det\begin{pmatrix}
    1&1\\
    L+k(e)&0
\end{pmatrix}\right|+\left|\det\begin{pmatrix}
    1&1\\
    -L+k(e)&0
\end{pmatrix}\right|=2L,$$
where the last equality holds since $L>|k(e)|$.
\end{proof}

For $j\neq i$, the homomorphisms $\alpha_j,\beta_j$ send $p_e$ to $0$, for $e\in\mathbf E_i(w)$.  Hence, letting $A_j$ be a combinatorial axis for $t_w$ in $\widetilde C^w_j$, we have that $A_j$ is a $\langle p_e,t_w\rangle$--invariant combinatorial line on which $t_w$ acts as a translation with displacement $2L$.

Let $M>\max_w(|J_w|-1)$, let $M_w=M-(|J_w|-1)$, and let $\widetilde C^\dagger_w$ be the standard tiling of $\Euclidean^{M_w}$ by unit cubes (so it is the product of $M_w$ combinatorial lines).  Let $F_w\times \langle t_w\rangle$ act on $\widetilde C^\dagger_w$ by declaring $F_w$ to act trivially and $t_w$ to act by a combinatorial translation of displacement $2L$ on each factor.  Define
$$\widetilde C_w=\widetilde C_w^\dagger\times\prod_{j\in J_w}\widetilde C^w_j,$$
equipped with the diagonal action of $F_w\times \langle t_w\rangle$.

For each $i\in J_w$ and $e\in\mathbf E_i(w)$, consider the action of $\langle p_e,t_w\rangle$ on $\widetilde C_w$.  By Lemma \ref{lem:generalised-axis}, there is a $\langle p_e,t_w\rangle$--invariant isometric subcomplex $\widetilde Y'_e\subset \widetilde C_i^w$ which is isomorphic to the cube complex dual to the pair of parallelism classes of walls in $\langle p_e,t_w\rangle$ corresponding to the subgroups $\langle p_e^{\pm L+k(e)}t_w\rangle$.  Let 
$$\widetilde Y_e=\widetilde C^\dagger_w\times \widetilde Y_e'\times \prod_{j\neq i}A_j.$$
Then $\widetilde Y_e$ is a $\langle p_e,t_w\rangle$--invariant isometric subcomplex of $\widetilde C_w$.

\textbf{$\vertices_\bullet(\Delta)$ cubulations.}  Let $b\in\Delta$ be a black vertex, so we have $G_b=\langle r_b\rangle\times \langle t_b\rangle$, which acts on $\reals^2$ via $r_b\mapsto (0,1)^T$ and $t_b\mapsto (1,0)^T$.  Let $\widetilde C_b'$ be the CAT(0) cube complex dual to the wallspace whose underlying set is $\reals^2$ and whose walls are the $G_b$--translates of the lines $\{z(1, L)^T:z\in\reals\}$ and $\{z(1, -L)^T:z\in\reals\}$.

Let $C''_b$ be the product of $M$ copies of the standard tiling of $\reals$ by $1$--cubes, with $G_b$ acting on $\widetilde C_b''$ diagonally, where the action on each factor has $r_b$ acting trivially and $t_b$ acting as a translation with displacement $2L$.  Let $\widetilde C_b=\widetilde C_b'\times\widetilde C_b''$, with the diagonal $G_b$--action.

\textbf{Shape comparison.}  Let $e\in\edges(\Delta)$ and let $b=e^-$ and $w=e^+$.  Then the edge map $I_e:G_b\to G_w$ is $I_e(r_b)=p_e$ and $I_e(t_b)=p_e^{k(e)}t_w$.  Since $I_e(r_bt^{\pm L})=p_e^{\pm L+k(e)}t_w$, there is an $I_e$--equivariant cubical isomorphism $\psi'_e:\widetilde C_b'\to \widetilde Y'_e\subset \widetilde C_i^w$, where $i\in J_w$ is such that $e\in\mathbf E_i(w)$.

Furthermore, $\widetilde C_b''$ and $\widetilde D_w=\widetilde C_w^\dagger\times \prod_{j\neq i}A_j$ are both isomorphic to the product of $M$ copies of the standard tiling of $\reals$ by $1$--cubes; $t_b$ acts as a length--$2L$ translation in each factor of the former and $t_w$ acts as a length--$2L$ translation in each factor of the latter.  Since $p_e$ acts on $\widetilde D_w$ trivially, it follows that there is a cubical isomorphism $\psi_e'':\widetilde C_b''\to\widetilde D_w$ that is $I_e$--equivariant.  So, by Lemma \ref{lem:shapes-and-products}, the $G_b$--action on $\widetilde C_b$ and the $G_w$--action on $\widetilde C_w$ have the same shape for $I_e$. 

\textbf{Conclusion.}  Finally, for each $w\in\vertices_\circ(\Delta)$, let $\widetilde C^\times_w$ be a CAT(0) cube complex with a $G_w$--action such that $\|a\|_{\widetilde C^\times_w}=0$ for exactly those $a\in G_w$ that are conjugate into $\langle p_e,t_w\rangle$ for some $e\in\mathbf E(w)$.  This exists by Lemma \ref{lem:small-cancellation-improper}.  Let $\widetilde Z_w=\widetilde C_w\times \widetilde C^\times_w$ be given the diagonal $G_w$--action, which is free.  For each edge $e$ incident to $w$, the subgroup $\langle p_e,t_w\rangle$ stabilises $\widetilde Y_e\times\{*\}$ for some point $*\in\widetilde C_w^\times$, so $\widetilde Z_w$ still gives the same shape to $\langle p_e,t_w\rangle$ as $\widetilde C_{e^-}$ gives to $\langle r_{e^-},t_{e^-}\rangle$.  Proposition \ref{prop:joining-walls} (applied with $\mathcal Q=\emptyset$) now provides a free $G$--action on a CAT(0) cube complex.
\renewcommand{\qedsymbol}{$\Box$}
\end{proof}

\subsection{Recovering the tubular case}\label{subsec:tubular-recovery}
Recall that a group $H$ is \emph{tubular} if it decomposes as a finite graph of groups with vertex groups isomorphic to $\integers^2$ and edge groups isomorphic to $\integers$. In \cite{Button:tubular-cubes}, Button characterised the tubular groups that are free-by-$\integers$.  If $G=F\rtimes_\Phi\integers$ is tubular, then its divergence function is at most quadratic \cite{BehrstockDrutu:short}, so by \cite{Macura:detour}, the automorphism $\Phi$ has linear growth.  In \cite{Button:tubular-cubes}, Button also showed that tubular free-by-$\integers$ groups admit \emph{equitable sets} and are hence cubulated by a result in \cite{Wise:tubular}.  We now  recover this result.

\begin{prop}\label{prop:tubular-cubulation}
Let $G=F\rtimes_\Phi\integers$, where $\Phi$ is linearly growing.  If $G$ is also virtually tubular, then $G$ has a finite-index subgroup satisfying the hypotheses of Proposition \ref{prop:general-linear}, and hence $G$ acts freely on a CAT(0) cube complex.
\end{prop}

\begin{proof}
By the discussion in Section \ref{subsec:linear-finite-index}, we  assume that $G=\pi_1M_\phi$, where $M_\phi$ is the mapping torus of a multitwist $\phi:Y\to Y$ with $Y$ $\mathcal P$--clean and starred (notation is from Section \ref{subsec:linear-finite-index}). Since these properties persist under passing to further finite-index subgroups, we can also assume that $G$ is tubular.  Let $\Delta$ be the usual underlying bipartite graph of the graph of spaces decomposition of $M_\phi$, and let $T$ be the associated Bass-Serre tree. Let $S$ be the Bass-Serre tree of a tubular decomposition of $G$.  Fix $w\in\vertices_\circ(\Delta)$ and let $S_w\subset S$ be the minimal subtree for the action of $G_w=F_w\times \langle t_w\rangle$ on $S$.  For $e\in\mathbf E(w)$, let $g_e\in F_w$ represent the conjugacy class in $F_w$ corresponding to the embedded cycle $p_e$.  So, $\langle g_e\rangle$ is a free factor of $F_w$.

Suppose that $S_w\cong\reals$.  Let $G_w'$ be the subgroup of $G_w$ of index at most two that fixes the ends of $S_w$.  Since stabilisers of vertices in $S$ are abelian, there is a short exact sequence $K_w\hookrightarrow G_w'\to \integers$
where $K_w$ is abelian and contains all elliptic elements.  In particular, since $G$ is free-by-cyclic, we have $K_w\leq \integers$ and hence $G_w'$ is virtually abelian.  This contradicts Proposition \ref{prop:split-over-tori}, so $S_w$ is bounded or nonelementary.

Since $t_w$ is central in $G_w$, the subtree $Min(t_w)$ is invariant under $G_w$, so by the preceding argument and minimality of $S_w$, we cannot have $Min(t_w)\cong\reals$, and hence $Min(t_w)=S_w$ is either bounded or nonelementary, and it is fixed pointwise by $t_w$.  Since $G_w$ acts with cyclic edge stabilisers, it follows that $F_w$ acts on $S_w$ with trivial edge stabilisers.  Since the vertex stabilisers in $G_w$ are $\integers^2$, the $F_w$--action on $S_w$ has cyclic vertex stabilisers.

For each $e\in\mathbf E(w)$, let $S_e$ be the minimal subtree of $S_w$ for the $\langle g_e\rangle$--action, so that either $S_e$ is the axis for $g_e$ or $S_e$ is fixed pointwise by $g_e$.  Since $S_e$ is fixed pointwise by $t_w$ and each $t_v$ for which $v$ is incident to the black vertex of $T$ fixed by $\langle g_e\rangle$, and edge stabilisers for the $G$--action on $S$ are cyclic, $S_e$ is a single vertex, and $\stabilizer_{F_w}(S_e)=\langle g_e\rangle$.


Since $\{\langle g_e\rangle:e\in\mathbf E(w)\}$ is a malnormal collection in $F_w$, the $F_w$--tree $S_w$ contains vertices $S_e$ whose (cyclic) stabilisers are the conjugates of the various $\langle g_e\rangle$ in $F_w$.
This gives a free factor decomposition $F_w=F_w'*\left(\asterisk_{e\in \mathbf E(w)}A_e\right)$ with the following property.  For each $e\in\mathbf E(w)$, any element $g\in F_w$ representing the $F_w$--conjugacy class corresponding to the cycle $p_e$ is conjugate in $F_w$ to an element that generates $A_e$. This shows that the elements $p_e\in \homology_1(X_w,\integers)$ are linearly independent, so $\Delta$--edge measurements can be chosen to satisfy the hypotheses of Proposition \ref{prop:general-linear}, which then gives the desired cubulation.
\end{proof}

Here is an example that can be cubulated using Proposition \ref{prop:general-linear}.  

\begin{exmp}[Cubulated, not tubular]\label{exmp:good-measurements-non-tubular}
Let $\Delta$ have a single white vertex $w$, four black vertices $b_1,\ldots,b_4$, and eight edges $e_i,\bar e_i$, for $1\leq i\leq 4$, with $e_i,\bar e_i$ joining $b_i$ to $w$.  Let $X_w$ be the graph with two vertices $x,y$, edges $a_1,a_2,a_3,a_4$ all joining $x$ to $y$, and loops $r_i,\ 1\leq i\leq 4$ attached to $x$. We now define some cycles $p_1,\ldots,p_4$ in $X_w$.  Let $$p_1=a_1a_2^{-1},p_2=(a_2a_3^{-1})^{-1},p_3=(a_3a_4^{-1})^{-1},p_4=a_4a_1^{-1}.$$  The $w$--vertex space is $X_w\times S_w$ and the $b_i$--vertex space is $X_{b_i}\times S_{b_i}$.  The edge space for $e_i$ attaches $X_b\times S_b$ to the torus $p_i\times S_w$, and $\bar e_i$ to $r_i\times S_w$.  The edge maps are such that $r_{e_i}$ is mapped to $p_i$, and $t_{e_i}$ is sent to $p_i^{k_i}t_w$ for some $k_i\in\integers-\{0\}$ (the value of $k_i$ does not matter for the current purpose).  For $\bar e_i$, the maps are $r_{\bar e_i}\mapsto r_i$ and $t_{\bar e_i}\mapsto t_w$.

Let $\mathbf E_1(w)=\{p_1,p_2\}$ and let $\mathbf E_2(w)=\{p_3,p_4\}$.  Define $\pi^1_w:\pi_1(X_w)\to\integers$ by $p_1\mapsto 1,p_2\mapsto 1,p_3\mapsto 0$.  Since $p_1p_2^{-1}p_3^{-1}=p_4^{-1}$, we have $\pi^1_w(p_4)=0$.  Define $\pi^2_w$ by sending $p_1,p_2$ to $0$ and $p_3$ to $1$.  We declare all of these maps to send each $r_i$ to $0$.  Then $\pi^2_w(p_4)=1$.  So $\pi^1_w,\pi_w^2$ (together with the obvious maps sending $r_i$ to $1$ and each $p_i$ to $0$) are good edge-measurements, so Proposition \ref{prop:general-linear} implies $G$ is cubulated. On the other hand, $G$ is not tubular, because there is no free factor decomposition of $\pi_1X_w$ where the $p_i$ lie in distinct free factors (as in the proof of Proposition \ref{prop:tubular-cubulation}), because they are not linearly independent in $_1(X_w)$. 
\end{exmp}

\subsection{Variant of Proposition \ref{prop:general-linear} to enable exits in the unbranched case}\label{subsec:exits}
Now we give conditions on $G$ ensuring that it acts on a cube complex that can be used as the base of induction in the proof of Theorem \ref{thm:main}, for a given collection of future attaching elements.

\begin{prop}\label{prop:exit-arrangement}
Assume the hypotheses of Proposition \ref{prop:general-linear} and assume in addition that $G$ is unbranched.  Let $T$ be the Bass-Serre tree, and let $\mathcal Q\subset G$ be a set of future attaching elements (see Definition \ref{defn:linear-hyperbolics}).  Then $G$ acts freely on a CAT(0) cube complex $\widetilde X$, and there is a $G$--equivariant cubical map $\pi:\widetilde X\to T$ with the following properties:
\begin{enumerate}[(i)]
    \item \label{item:FAM-convex} for all $v\in\vertices(T)$, the preimage $\pi^{-1}(v)$ is a convex subcomplex of $\widetilde X$;
    \item \label{item:FAM-carrier} for all $e\in\edges(T)$, the preimage $\pi^{-1}(\interior e)$ is the open carrier of a hyperplane in $\widetilde X$;
    \item \label{item:FAM-bound} $\sup_H\diam(\pi(H))<\infty$, where the supremum is over all hyperplanes $H$ of $\widetilde X$.
\end{enumerate}
Hence $\widetilde X$ has walls exiting $\mathcal Q$, and for all $qt\in\mathcal Q$, the subcomplex $\widetilde Y_q=\pi^{-1}(\axis(qt))$ is a convex subcomplex of $\widetilde X$ with stabiliser exactly $\langle qt\rangle$.  Finally, there exists $L>0$ such that $\|t_w\|_{\widetilde X}=L$ for all $w\in\vertices_\circ(\Delta)$.
\end{prop}

We summarise the conclusion of Proposition \ref{prop:exit-arrangement} by saying that the $G$--action on $\widetilde X$ is \emph{$\mathcal Q$--exitable}.  Before proving the proposition, we extract a simplifying lemma.

\begin{lem}\label{lem:exit-simplify}
Suppose that $\widetilde X$ and $\pi$ satisfy conclusions \eqref{item:FAM-bound},\eqref{item:FAM-carrier},\eqref{item:FAM-convex} from Proposition \ref{prop:exit-arrangement}.  Then $\widetilde X$ has walls exiting $\mathcal Q$, and for all $qt\in\mathcal Q$, the subcomplex $\widetilde Y_q=\pi^{-1}(\axis(qt))$ is a convex subcomplex of $\widetilde X$ with stabiliser exactly $\langle qt\rangle$.
\end{lem}

\begin{proof}
Since $\pi$ is a cubical map whose vertices have convex preimages it is a \emph{restriction quotient} and hence convex subcomplexes of $T$ have convex preimages \cite{HuangKleiner}.  This implies that $\widetilde Y_q$ is convex.  Second, equivariance implies that $\stabilizer_G(\widetilde Y_q)=\stabilizer_G(\axis(qt))$, and this stabiliser is $\langle qt\rangle$ by Lemma \ref{lem:future-attaching-acyl}.  Finally, item \eqref{item:FAM-bound} and Definition \ref{defn:walls-exit} imply that walls exit $\mathcal Q$.
\end{proof}

\begin{proof}[Proof of Proposition \ref{prop:exit-arrangement}]
By Lemma \ref{lem:exit-simplify}, it suffices to produce a free $G$--action on a CAT(0) cube complex $\widetilde X$ satisfying \eqref{item:FAM-bound},\eqref{item:FAM-carrier},\eqref{item:FAM-convex}.  

Let $w$ be a white vertex.  For each black vertex $e^-$, where $e\in \mathbf E(w)$, let $i=i(e)$ be the unique $i$ with $e\in \mathbf E_i(w)$.  By the unbranched hypothesis and Lemma \ref{lem:unbranched-char}, there is a unique white vertex $\bar w$ and edge $\bar e$ with $\bar e^-=e^-$ and $\bar e^+=\bar w$.  Let $\bar k(e)=k(\bar e)$.  By Lemma \ref{lem:partition-edges}.\eqref{item:edge-partition-coeffs}, there are integers $m_i,\bar m_i$ such that $k(e)=m_i$ and $\bar k(e)=\bar m_i$ for all $e\in \mathbf E_i(w)$. Let $c_i=m_i-\bar m_i$.  Proposition \ref{prop:split-over-tori}.\eqref{item:linear-finite-index} ensures $c_i\neq 0$.  The good edge measurement assumption provides $K_i\neq 0$ such that $\pi_i^w(p_e)=K_i$ for $e\in\mathbf E_i(w)$ and $\pi_i^w(p_f)=0$ for $f\in\mathbf E(w)-\mathbf E_i(w)$.

As in the proof of Proposition \ref{prop:general-linear}, we first arrange temporarily that the $K_i$ are all equal (over all $i$ and $w$), so by further rescaling, we can assume that there is a fixed positive $K$ such that $K_i=Kc_i$ for all $w\in\vertices_\circ(\Delta)$ and $i\in J_w$.  Let $$O=\lcm\{|c_i|^2:w\in\vertices_\circ(\Delta),i\in J_w\}$$
and for each $i$, let $A_i=Oc_i/|c_i|^2$, so that $A_ic_i=O$.  In order for $A_i$ to be an integer satisfying the preceding, we just needed $O$ to be divisible by each $c_i$.  Our specific choice of $O$ additionally ensures that $c_i$ divides $A_i$ for each $i$.

Now fix a white vertex $w$.  Define a homomorphism $\gamma_w:G_w\to\integers$ by
$$\gamma_w(ft_w^x)=\sum_{j\in J_w}\frac{A_j\pi_j^w(f)}{c_j}-OKx$$
for $f\in F_w$ and $x\in\integers$.

\begin{claim}\label{claim:exit-kernel}
For each $i\in J_w$ and $e\in \mathbf E_i(w)$, we have $\kernel\gamma_w\cap \langle p_e,t_w\rangle=\langle p_e^{c_i}t_w\rangle$.
\end{claim}
\renewcommand{\qedsymbol}{$\blacksquare$}

\begin{proof}[Proof of Claim \ref{claim:exit-kernel}]
Observe that $\gamma_w(p_e)=A_iK$ and $\gamma_w(t_w)=-OK$.  Hence $\gamma_w(p_e^{c_i}t_w)=A_iKc_i-OK=0$ and the kernel intersects $\langle p_e,t_w\rangle$ in a subgroup of rank one, so since $p_e^{c_i}t_w$ generates a maximal cyclic subgroup of $\langle p_e,t_w\rangle$, the claim follows. 
\end{proof}

Let $\widetilde X_w^\dagger$ be the CAT(0) cube complex isomorphic to the standard tiling of $\reals$ by unit cubes, with $0$--skeleton $\integers$, equipped with the $G_w$--action given by $\gamma_w$.

\begin{claim}\label{claim:exit-action}
Let $e\in\edges(\Delta)$.  Then there exists a CAT(0) cube complex $\widetilde X_{\bar e}^\circ$ such that $G_{\bar e^+}=F_{\bar e^+}\times \langle t_{\bar e^+}\rangle$ acts on $\widetilde X_{\bar e}^\circ$ such that all hyperplane stabilisers are $\langle t_{\bar e^+}\rangle$, and, for all edges $f$ incident to $\bar e^+$,  
$p_{f}$ acts hyperbolically if $i(f)=i(\bar e)$, but $i(f)\neq i(\bar e)$, then $p_f$ acts elliptically.  Moreover, if $f,f'$ are distinct and $i(f)=i(f')=i(\bar e)$, then $p_f$ and $p_{f'}$ are not cut by any common walls.  
\end{claim}

\begin{proof}[Proof of Claim \ref{claim:exit-action}]
Let $i\in J_{e^+}$ be such that $\bar e\in \mathbf E_i(\bar e^+)$ and consider the homomorphism $\pi^i_{\bar e^+}$, which is induced by a cellular map $X_{\bar e^+}\to \mathbb S^1$ which is nullhomotopic on each  $p_f,\ f\not\in \mathbf E_i(\bar e^+)$, but not on $p_{\bar e}$ (or any other cycle in $\mathbf E_i(\bar e^+)$).  Moreover, we can assume that the preimage of the vertex in $\mathbb S^1$ is a subgraph of $X_{\bar e^+}$ and the preimage of the open edge is a union of open edges.  This splits $F_{\bar e^+}$ as a finite graph of groups with trivial edge groups, obtained by deleting the open edges of $X_{\bar e^+}$ that map homeomorphically to the edge in $\mathbb S^1$.  The vertex groups correspond to components of the preimage of the vertex.  Let $\widetilde X_{\bar e}^\circ$ be the Bass-Serre tree.  If $f,f'$ are distinct and act hyperbolically, then they belong to $\mathbf E_{i}(\bar e^+)$, so by Remark \ref{rem:disjoint-cycles-in-partition}, $p_f,p_{f'}$ do not share any edges, and hence are not cut by any common walls.
\end{proof}

For each edge $e\in\edges(\Delta)$ such that $\bar e^+=w$, let $i\in J_w$ be such that $i=i(\bar e)$.  Let $\widetilde X^\circ_i$ be a copy of $\widetilde X^\circ_{e}$, equipped with the $G_w$--action from Claim \ref{claim:exit-action}. Let $$\widetilde X_w=\widetilde X_w^\dagger\times \prod_{i\in J_w}\widetilde X^\circ_{i},$$ on which we let $G_w$ act diagonally.  For each edge $e$ incident to $w$, the subgroup $\langle p_e,t_w\rangle$ stabilises an isometric subcomplex $\widetilde Y_e$ isomorphic  to the cubulation of $\langle p_e,t_w\rangle$ coming from the two codimension--$1$ subgroups $\langle t_w\rangle$ and $\langle p_e^{c_i}t_w\rangle$.  By applying Lemma \ref{lem:small-cancellation-improper} and replacing $\widetilde X_w$ by its product with the cube complex output by that lemma, we can assume that the action on $\widetilde X_w$ is free, without changing the induced cubulations of the various $\langle p_e,t_w\rangle$.  For each black vertex $b$, with incident edges $e,\bar e$, let $\widetilde X_b$ be the cubulation of $G_b$ using the codimension--$1$ subgroups $\langle r_b^{-k(e)}t_b\rangle$ and $\langle r_b^{-k(\bar e)}t_b\rangle$ (each with multiplicity to be chosen shortly).  Let $w=e^+,\bar w=\bar e^+$.

\textbf{Shape comparison.}  Let $\omega_b$ be a wall in $\widetilde X_b$ stabilised by $\langle t_br_b^{-k(e)}\rangle$ and let $\bar\omega_b$ be a wall stabilised by $\langle t_br_b^{-k(\bar e)}\rangle$.  Then $G_b\cdot \omega_b=\langle r_b\rangle \cdot \omega_b$ and $G_b\cdot\bar\omega_b=\langle r_b\rangle\cdot \bar\omega_b$.  Meanwhile, the walls in $G_b\cdot \bar\omega_b$ belong to $|k(e)-k(\bar e)|=|c_i|$ orbits under the action of $\langle t_br_b^{-k(e)}\rangle$.

The analysis in $\widetilde Y_e$ is as follows.  Let $\upsilon_w$ be a wall stabilised by $\langle t_w\rangle$ and let $\bar\upsilon_w$ be a wall stabilised by $\langle t_wp_e^{c_i}\rangle$.  The orbit $\langle p_e,t_w\rangle\cdot \upsilon_w$ contains $|\pi_i^w(p_e)|=|Kc_i|$ orbits under the $\langle p_e\rangle$--action, and the orbit $\langle p_e,t_w\rangle\cdot \bar\upsilon_w$ contains $|\gamma_w(p_e)|=|A_iK|$ orbits under the $\langle p_e\rangle$--action.  The hyperplanes in the $\langle p_e,t_w\rangle$--orbit of $\bar\upsilon_w$ belong to $|\gamma_w(t_w)|=|OK|$ orbits under the $\langle t_w\rangle$--action.

Now subdivide $\widetilde X_b$ equivariantly so that $\bar\omega_w$ is replaced by $|A_iK|$ copies, and do the same for $\omega$.  Note that $|A_i|=|A_{i(\bar e)}|$ since $|c_i|=|c_{i(\bar e)}|$, so this subdivision agrees with the one performed when considering $\bar e$ instead of $e$.

The subdivisions we choose in $\widetilde X_w$ are not symmetric for the different types of walls.  First, we do not subdivide $\widetilde X^\dagger_w$.  Second, we subdivide the factor $\widetilde X^\circ_i$ so that each wall is replaced by $|A_i/c_i|$ parallel copies; $c_i$ divides $A_i$, by our choice of $O$.  In $\widetilde Y_e$, the cumulative effect of these subdividions is to replace $\upsilon_w$ with $|A_i|/|c_i|$ parallel copies, leaving $\bar\upsilon_w$ unchanged.

Since the edge map sends $r_b^{-k(e)}t_b$ to $t_{w}$ and $r_b^{-k(\bar e)}t_b$ to $p_{e}^{k(e)-k(\bar e)}t_{w}=p_e^{c_i}t_w$, after the subdivisions, the assignments $\upsilon_w\mapsto \omega_b$ and $\bar\upsilon_w\mapsto \bar\omega_b$ induce an equivariant bijection from the walls in $\widetilde Y_e$ to those in $\widetilde X_b$.  Hence the above cubulations of $G_b$ and $G_w$ give the same shape to $G_e$, and each $t_w$ has translation length $|OK|$.

\textbf{Conclusion.}  Proposition \ref{prop:joining-walls} now provides a free action of $G$ on a CAT(0) cube complex $\widetilde X$, and an equivariant map $\pi:\widetilde X\to T$.  There are finitely many orbits of hyperplanes by Lemma \ref{lem:finitely-many-orbits}, and \eqref{item:FAM-carrier} and \eqref{item:FAM-convex} follow from the proposition.  Since wandering hyperplanes in $\widetilde X$ intersect the vertex spaces in unique hyperplanes, the translation lengths of all $t_w$ on $\widetilde X$ are still $|OK|$.

It remains to bound the diameter of $\pi(H)$, where $H$ is a hyperplane of $\widetilde X$. Recall from the proof of Proposition \ref{prop:joining-walls} how $\widetilde X$ is constructed.  The \emph{vertical} hyperplanes are sent by $\pi$ to single points, by definition, so we only need to consider the case where $H$ is \emph{wandering}.  In our situation, if $H$ is wandering, then, letting $Z$ be the graph of spaces from the proof of Proposition \ref{prop:joining-walls}, $H$ is an elevation of an immersed wall $\bar H\to Z$ with the following properties.  There is a white vertex $w$ and $H$ intersects the vertex space $G_w\backslash\widetilde X_w$ in an immersed hyperplane corresponding to a normal subgroup intersecting each $\langle p_e,t_w\rangle,\ e\in\mathbf E(w)$, in the subgroup $\langle p^{k(e)-k(\bar e)}t_w\rangle$, and hence intersects $G_{\bar e^+}\backslash \widetilde X_{\bar e^+}$ in a hyperplane corresponding to a splitting of $G_{\bar e^+}$ over $\langle t_{\bar e^+}\rangle$ in which all $p_f,\ f\in\mathbf E(\bar e^+)-\{\bar e\}$ are not cut by $H$.  Hence $\pi(H)$ is contained in a ball of radius $2$ in $T$.
\renewcommand{\qedsymbol}{$\Box$}
\end{proof}

\subsection{Tubular termini and fast mapping tori}\label{subsec:tubular-termini}
Now we find applications for Proposition \ref{prop:exit-arrangement}.

\begin{defn}[Tubular termini]\label{defn:virtually-tubular-termini}
Let $\Phi\in\Out(F)$ have polynomial growth rate $d\geq 1$.  Let $G$ be the mapping torus of $\Phi$ and let $k>0$ be such that $\Phi^k$ is UPG.  Let $G_k$ be the mapping torus of $\Phi^k$, regarded as a finite-index subgroup of $G$.  Let $\{L_i\}_{i\in I}$ be the bottom-level vertex groups in the superlinear hierarchy of $G_k$.  Recall that each $L_i$ is the mapping torus of a linear-growth automorphism of a subgroup of $F$.  We say $G$ has \emph{tubular termini} if each $L_i$ is  a tubular group.
\end{defn}

\begin{lem}\label{lem:fast-tubular}
Let $\Phi\in\Out(F)$ have polynomial growth.  If $\Phi$ is fast, then $G=F\rtimes_\Phi\integers$ is unbranched and has tubular termini.
\end{lem}

\begin{proof}
Since $\Phi$ is fast, $G$ is unbranched by Lemma \ref{lem:fast-implies-unbranched}, and hence the finite-index subgroup $G_k$ from Definition \ref{defn:virtually-tubular-termini} is also unbranched. Since $G_k$ contains $F$, and $\Phi$ and $\Phi^k$ have the same growth rate, $\Phi^k$ is also fast.  So we can assume that $G=G_k$ and $\Phi^k=\Phi$, i.e. $\Phi$ is already UPG.  

We argue by induction on the growth rate $d=\rank(F)-1$ of $\Phi$ that $G$ has tubular termini.

\textbf{Linear case.}  Suppose $d=1$.  Then $\rank(F)=2$.  Let $f:\Gamma\to\Gamma$ be a relative train track for $\Phi$.  By \cite[Thm. 1.1]{BFH:kolchin},  $\Gamma$ has two edges $e_0,e_1$, which must therefore be loops at a common vertex.  By Proposition \ref{prop:basic-rttm}, we have $f(e_0)=e_0$ and $f(e_1)=e_1e_0^k$ for some $k$.  Thus $G$ is an HNN extension of $\langle e_0,t\rangle\cong\integers^2$ with stable letter $e_1^{-1}$ conjugating $t$ to $e_0^kt$.  Thus $G$ is tubular.

\textbf{Superlinear case.}  Suppose that $d\geq 2$.  Let $\Delta$ be the underlying graph of the top strata decomposition.  Exactly as in the proof of Lemma \ref{lem:fast-implies-unbranched}, each vertex group $G_v$ is the mapping torus of a fast automorphism $\phi_v$ of $F_v$ with growth rate at most $d-1$.  Hence $G_v$ has tubular termini, by induction, and Definition \ref{defn:virtually-tubular-termini} implies that $G$ has tubular termini.
\end{proof}

\begin{defn}[Immediate future attaching elements]\label{defn:terminal-Q_i}
Let $\Phi\in\Out(F)$ be a UPG element with polynomial growth rate $d\geq 1$, and let $G$ be its mapping torus.  Let $\{L_i\}_{i\in I}$ be the bottom-level vertex groups in the superlinear hierarchy, as in Definition \ref{defn:virtually-tubular-termini}.  For each $L_i$, let $T_i$ be the Bass-Serre tree of the splitting of $L_i$ from Proposition \ref{prop:split-over-tori}.  Let $\mathcal Q_i\subseteq L_i$ be the following set of future attaching elements.  Consider the graph of groups in the superlinear hierarchy that has $L_i$ as a vertex group.  For each edge $e$ incident to $L_i$, the edge group maps to a cyclic subgroup of $L_i$ generated by some element $\ell$.  The set $\mathcal Q_i$ consists of one such $\ell$ for each $e$, excluding those $e$ for which $\ell$ is elliptic on $T_i$.  We call $\mathcal Q_i$ the set of \emph{immediate future attaching elements} in $L_i$.
\end{defn}

\begin{cor}[Cubulating bottom-level vertex groups with exits]\label{cor:cubulated-with-exits}
Let $\Phi\in\Out(F)$ be a UPG element with polynomial growth rate $d\geq 1$, and let $G$ be its mapping torus.  Suppose that $\Phi$ is fast or, more generally, is unbranched and has tubular termini.  Let $\{L_i\}_{i\in I}$ be the bottom-level vertex groups in the superlinear hierarchy, as in Definition \ref{defn:virtually-tubular-termini}.  For each $i\in I$, let $\mathcal Q_i\subseteq L_i$ be the immediate future attaching elements.

Then for each $i\in I$, there is a free action of $L_i$ on a CAT(0) cube complex $\widetilde X_i$ that satisfies the conclusions from Proposition \ref{prop:exit-arrangement} with respect to the set $\mathcal Q_i$ of future attaching elements.
\end{cor}

\begin{proof}
By Lemma \ref{lem:fast-tubular}, each $L_i$ is tubular and $G$ is unbranched if $\Phi$ is fast; otherwise, these two statements hold by hypothesis.  Hence each $L_i$ is also unbranched by Lemma \ref{lem:unbranched-char}.  Proposition \ref{prop:tubular-cubulation} implies that the hypotheses of Proposition \ref{prop:general-linear} hold.  Together with the fact that $L_i$ is unbranched, this allows us to apply Proposition \ref{prop:exit-arrangement} to get the  desired cubulations $L_i\actson \widetilde X_i$.
\end{proof}

\begin{rem}[Cocompactness in the fast case]\label{rem:cocompactness-in-the-fast-case}
If $\Phi$ is fast, then the proof of Lemma \ref{lem:fast-tubular} implies that there is a single terminal vertex group $L$ and $L\cong \langle a,b,t\mid [a,t],b^{-1}tb=a^kt\rangle$ for some $k\neq 0$.  In this case, by either applying \cite[Cor. 5.10]{Wise:tubular} or tracing through the proof of Proposition \ref{prop:exit-arrangement} and noting that the use of Lemma \ref{lem:small-cancellation-improper} can be avoided in this case, the cubulation $L\actson \widetilde X$ from Corollary \ref{cor:cubulated-with-exits} can be chosen with the following additional properties:
\begin{itemize}
    \item $L$ acts on $\widetilde X$ cocompactly.
    \item Each axis of $t$ is cubically convex.
    \item Each future attaching element $qt$ is cubically convex-cocompact.
\end{itemize}
This observation will be used in Remark \ref{rem:cocompactness}.
\end{rem}

\section{Cubulating groups with acylindrical cyclic hierarchies}\label{subsec:acyl-cyclic}
Let $G$ be a torsion-free finitely generated group acting cocompactly and without edge inversions on a tree $T$.  We assume that $(G,T,\{\widetilde X_v\}_v,\{\widetilde X_e\},\{\widetilde X_e^\pm\}_e,\{\psi_e\}_e)$ is a graph of cubulated groups with compatibly cubulated edge groups (see Definition \ref{defn:graph-walls}).  Let $\mathcal Q\subset G$ be a finite set of elements satisfying Definition \ref{defn:graph-walls}.\eqref{item:stass-Q}.  Our first goal is the following theorem.

\begin{thm}[Turns from exits]\label{thm:cubulation-with-turns}
Let $\widetilde X$ be the $G$--CAT(0) cube complex, and $\pi:\widetilde X\to T$ the map, provided by Proposition \ref{prop:joining-walls}.  Suppose that  $\widetilde X$ has walls exiting $\mathcal Q$ (see Definition \ref{defn:walls-exit}).

For each $v\in\vertices(\bar T)$, let $\mathcal T_v\subset G_v$ be a finite set such that 
\begin{itemize}
 \item each $t\in\mathcal T_v$ generates a maximal cyclic subgroup of $G$ and
 \item $\mathcal T_v$ is wall-independent for the $G_v$--action on $\widetilde X_v$.
\end{itemize}

 Let $L\in\integers_{>0}$ and suppose that $\|t\|_{\widetilde X_v}=L$ for all $t\in\mathcal T_v$ and all $v\in\vertices(\bar T)$.  

Then there exists a CAT(0) cube complex $C_\sharp$ such that all of the following hold:
\begin{enumerate}[(a)]
 \item \label{item:turns-action} $G$ acts freely on $C_\sharp$ with finitely many orbits of hyperplanes.
 \item \label{item:turns-wall-independent} For each $v\in\vertices(\bar T)$, the set $\mathcal T_v\cup\mathcal Q$ is wall-independent for the $G$--action on $C_\sharp$.
 \item \label{item:turns-equal} For each $q\in\mathcal Q$ and $v\in\vertices(\bar T)$ and $t\in\mathcal T_v$, we have $\|q\|_{C_\sharp}=\|t\|_{C_\sharp}=2 L$.
\end{enumerate}
\end{thm}

Before proving the theorem, we need some preparatory discussion about the following cubical presentation (recall Definition \ref{defn:cubical_presentation}) for $G$.  Let $\widetilde X$ be as in the statement of Theorem \ref{thm:cubulation-with-turns} and let $X=G\backslash \widetilde X$.  Since the $G$--action on $\widetilde X$ is free and without inversions, $X$ is a nonpositively curved cube complex and the quotient map $\widetilde X\to X$ is the universal covering map and $\pi_1(X)=G$.  For each $q\in\mathcal Q$, composing the covering map with the inclusion $\widetilde Y_q\to\widetilde X$ gives a local isometry $\widetilde Y_q\to X$.  Each of these local isometries induces a map on fundamental group with trivial image, since $\widetilde Y_q$ is simply connected (by Proposition \ref{prop:joining-walls}, it is CAT(0)).  Hence we have a cubical presentation for $G$ in which the $\widetilde Y_q$ are relators, namely $G\cong \pi_1X^*$, where 
$$X^*=\langle X\mid \{\widetilde Y_q\to X:q\in\mathcal Q\}\rangle.$$

Recall that $\widetilde{X^*}$ is the cover of $X$ corresponding to the normal closure in $\pi_1X$ of the images of the fundamental groups of the relators, so in this case, where the relators are all simply connected, $\widetilde{X^*}=\widetilde X$. We will use Theorem \ref{thm:cubulating X star}, which in our situation involves declaring new walls in $\widetilde X$ by making certain hyperplanes equivalent.  (We are abusing notation slightly: $\widetilde{X^*}$ is really the universal cover of the cubical presentation complex, i.e. it contains a cubical part and some cones over relators, but we are always only concerned with the cubical part and therefore use $\widetilde{X^*}$ to refer to the cubical part with the cones removed.)

\begin{lem}\label{lem:metric-small-cancellation}
The cubical presentation $X^*$ satisfies the $C'(\frac{1}{14})$ condition from Definition \ref{defn:c12} and hence has short innerpaths in the sense of \cite[Defn. 3.69]{Wise:QCH}
.
\end{lem}

\begin{proof}
The metric condition holds vacuously since $\widetilde Y_q$ is simply connected, and this implies short innerpaths by \cite[Lem. 3.70]{Wise:QCH}.
\end{proof}

The next task is to define equivalence relations on the hyperplanes of each $\widetilde Y_q$ so that $X^*$ satisfies the simplified $B(6)$ condition from Definition \ref{defn:B6}.  Let $\chi_0$ be the constant from Definition \ref{defn:walls-exit} and let $\chi_1$ be the constant from Lemma \ref{lem:axis-subcomplex-overlap}.  

\begin{lem}\label{lem:piece-tree-bound}
There exists $\chi_2=\chi_2(\chi_0,\chi_1)$ such that the following holds.  Let $q\in\mathcal Q$ and let $P\to \widetilde Y_q$ be a geodesic piece-path.  Then $|\pi\circ P|\leq \chi_2$.
\end{lem}

\begin{proof}
Let $A$ be a convex subcomplex of $\widetilde X$ such that $\gate_{1,q}(A)$ is a piece in $\widetilde Y_q$ with $P$ a geodesic in $\gate_{1,q}(A)$.  So, either $A=g\widetilde Y_{q'}$ for some $g\in G$ and $q'\in\mathcal Q$ with either $q\neq q'$ or $g\not\in\langle q\rangle$ (cone-piece case), or $A=N(H)$ for a hyperplane $H$ of $\widetilde X$ not crossing $\widetilde Y_q$ (wall-piece case).

In both cases, $\pi\circ P$ is a geodesic of $T$ since $P$ is a geodesic and $\pi$ a restriction quotient, so $|\pi\circ P|\leq \diam_T(\pi(A)\cap \axis(q))$.  For cone-pieces, $\diam_T(\pi(A)\cap \axis(q))\leq \chi_1$ by Lemma \ref{lem:axis-subcomplex-overlap}.

In the wall-piece case, $H$ cannot intersect $\widetilde Y_q$.  Suppose $x,y\in\widetilde X$ have the property that $\pi(x),\pi(y)\in\pi(H)\cap \pi(\widetilde Y_q)$.  Then $x,y\in \widetilde Y_q$, since $\pi^{-1}(\pi(\widetilde Y_q))=\widetilde Y_q$.  If $H$ is a vertical hyperplane, then $\pi^{-1}(\pi(H))=H$, and so $x\in\widetilde Y_q\cap H$, contradicting that $H$ does not cross $\widetilde Y_q$.  Hence $H$ is wandering.  If $\pi(x)\neq \pi(y)$, then they are separated by some edge $e$ of the tree $\pi(\widetilde Y_q)$, and $e\subset \pi(H)$ since $\pi(H)$ is connected.  Now, if $z\in H$ satisfies $\pi(z)=m_e$, then $H$ crosses $\widetilde X_e$, which is contained in $\widetilde Y_q$,  contradicting that $H\cap \widetilde Y_q=\emptyset$.  Thus $\pi(x)=\pi(y)$, i.e. $\pi(P)$ is a point.
\end{proof}

\begin{cons}[New walls in the $\widetilde Y_q$]\label{cons:B6-walls}
Let $\mathcal H(\widetilde X)$ be the set of hyperplanes of $\widetilde X$ (recall that $\mathcal H(\widetilde X)=\mathcal W(\widetilde X)\sqcup\mathcal V(\widetilde X)$ is the $G$--invariant partition into wandering and vertical hyperplanes).

From Proposition \ref{prop:joining-walls}, $\|t\|_{\widetilde X}=\|t\|_{\widetilde X_v}$ for $t\in\mathcal T_v$, and the hyperplanes cutting $t$ in $\widetilde X$ are all wandering.  By replacing $\widetilde X$ with the CAT(0) cube complex obtained by cubically subdividing to replace each wandering hyperplane with two parallel copies (using that the wandering hyperplanes are $G$--invariant), we have $\|t\|_{\widetilde X}=2L$, and we now have an involution ${}^\dagger:\mathcal W(\widetilde X)\to\mathcal W(\widetilde X)$ taking each $H\in\mathcal W(\widetilde X)$ to the parallel copy $H^\dagger$ of $H$ with the property that $H,H^\dagger$ were created from the same hyperplane of the original complex by the subdivision.

For each $q\in\mathcal Q$, let $M_q>0$ be the translation length of $q$ on the tree $T$, which is the same as the number of $\langle q\rangle$--orbits of vertical hyperplanes in $\widetilde X$ that cut $q$, and hence the same as the number of $\langle q\rangle$--orbits of vertical hyperplanes in $\widetilde Y_q$ that cut $q$.

Now subdivide $T$ so each edge has length $2L$ and pull back the subdivision to $\widetilde X$ using $\pi$.  Then $q$ is now cut by $2LM_q$ $\langle q\rangle$--orbits of vertical hyperplanes.  (For both the vertical and  wandering subdivisions of $\widetilde X$, we also do the induced subdivision of $\widetilde Y_q$, to keep it a subcomplex of $\widetilde X$.)

For each of the original edges $e$ of $T$, let $m_1<m_2<\cdots < m_{2L}$ be the midpoints of the edges in the subdivision of $e$, and let $H_i=\pi^{-1}(m_i)$.  For $1\leq i\leq L$, let $H_i^\dagger=H_{2L-i+1}$ and for $L+1\leq i\leq 2L$, let $H_i^\dagger=H_{i-2L+1}$.  This extends the involution ${}^\dagger$ over the vertical hyperplanes.  Hence ${}^\dagger:\mathcal H(\widetilde X)\to\mathcal H(\widetilde X)$ is a $G$--equivariant involution preserving $\mathcal W(\widetilde X)$ and $\mathcal V(\widetilde X)$.  For each $H$, the hyperplanes $H,H^\dagger$ are parallel (they cross the same hyperplanes) and  at Hausdorff distance at most $2L$ in $\widetilde X$.  Moreover, $H,H^\dagger$ are in different $G$--orbits and have the same $G$--stabiliser.   For all $g\in G$, $H$ cuts $g$ if and only if $H^\dagger$ cuts $g$. If $H\cap \widetilde Y_q\neq\emptyset$, then $H^\dagger\cap\widetilde Y_q\neq\emptyset$.

Let $K\geq 1$ be an integer to be chosen later (any sufficiently large integer will do; we will point out the exact constraints as they arise). For each $q\in\mathcal Q$, let $\mathcal W_q=\{H\cap \widetilde Y_q:H\in\mathcal W(\widetilde X),\ H\cap\widetilde Y_q\neq \emptyset\}$ and $\mathcal V_q=\{H\cap \widetilde Y_q:H\in\mathcal V(\widetilde X),\ H\cap\widetilde Y_q\neq\emptyset\}$.  The properties of ${}^\dagger$ mentioned above imply that there is a decomposition $\mathcal W_q=\mathcal W_q^-\sqcup \mathcal W_q^+$ making ${}^\dagger:\mathcal W_q^\pm\to\mathcal W_q^\mp$ a bijection, and there is a decomposition $\mathcal V_q=\mathcal V_q^-\sqcup\mathcal V_q^+$ with the analogous property. Define a relation $\sim_q$ on $\mathcal W_q\cup\mathcal V_q$ as follows:
\begin{itemize}
 \item If $U\in\mathcal W_q$ does not cut $q$, then $U\sim_q U$, and $U$ is unrelated to other hyperplanes of $\widetilde Y_q$.
 
 \item If $U\in\mathcal W_q$ cuts $q$, and $U\in\mathcal W_q^\pm$, then let $U\sim_q q^{\mp K} U^\dagger$.
 
 \item If $U\in\mathcal V_q$, then $U$ necessarily cuts $q$, by the definition of $\widetilde Y_q$.  Let $\sigma$ be a fixed length--$2LM_q$ combinatorial geodesic in $\axis(q)\subset T$ (we make one such choice for each $q$ and extend equivariantly).  If $\pi(U)$ is the midpoint of one of the initial $2L$ edges of $\sigma$, then $U\sim_q U$ and $U$ is related to no other hyperplanes.  Otherwise, $U$ belongs to the ${}^\dagger$--invariant set of $2L(M_q-1)$ hyperplanes projecting to midpoints of the terminal $2L(M_q-1)$ edges of $\sigma$, and if $U\in\mathcal V^\pm_q$, we let $U\sim_q q^{\mp K}U^\dagger$.
\end{itemize}

Note that $\sim_q$ is an equivalence relation, and each hyperplane $U$ of $\widetilde Y_q$ is either unique in its $\sim_q$--class, or belongs to a class containing exactly two hyperplanes.
\end{cons}

The following lemma reveals how large $K$ must be.  To save notation, we use the following convention: after the subdivisions in Construction \ref{cons:B6-walls}, the bounds in Lemmas \ref{lem:piece-tree-bound} and \ref{lem:axis-subcomplex-overlap}, and the bound in Definition \ref{defn:walls-exit}, still hold, and we continue to use the notation $\chi_0,\chi_1,\chi_2$ for the bounds in the subdivided complex (i.e. we multiplied them by $2L$ and renamed them).

\begin{lem}[Equivalent hyperplanes are many pieces apart]\label{lem:K-large}
There exists $K_0$, depending on $L$ and the constants $\chi_0,\chi_1$, such that the following holds for all $q\in\mathcal Q$ provided $K\geq K_0$, where $K$ is as in Construction \ref{cons:B6-walls}.  Let $U,V\in\mathcal W_q\cup\mathcal V_q$ be distinct and suppose that $U\sim_qV$.  Then $\dist_T(\pi(U),\pi(V))>100(\chi_0+\chi_2+2L+1)$.  In particular, if $P\to\widetilde Y_q$ is a path starting on $N(U)$ and ending on $N(V)$, then $P$ is not the concatenation of fewer than $8$ piece-paths.
\end{lem}

\begin{proof}
The element $q$ has translation length at least $1$ along the combinatorial geodesic $\axis(q)$ of $T$.  Since $U$ is equivalent to a hyperplane distinct from $U$, the definition of $\sim_q$ implies that $U$ cuts $q$.  By Definition \ref{defn:walls-exit}, and our assumption on exiting, $\pi(U)$ has diameter at most $\chi_0$.

Without loss of generality, $V=q^KU^\dagger$, so $\dist_T(U,V)>K-2\chi_0-2L$, using that $q^KU$ and $q^KU^\dagger=(q^KU)^\dagger$ are at Hausdorff distance at most $2L$ and $\pi$ is $1$-lipschitz.  The existence of $K_0$ with the first property in the statement follows immediately.  

Let $P\to\widetilde Y_q$ be a path that is the concatenation of at most $7$ piece paths, starting on $N(U)$ and ending on $N(V)$.  Let $P=P_1\cdots P_k$, where $k\leq 7$ and each $P_i$ is a piece-path in $\widetilde Y_q$.  Let $\bar P_i$ be a geodesic joining the endpoints of $P_i$; since images of convex sets under $\gate_{1,q}$ are again convex, $\bar P_i$ is also a piece-path, so $|\pi\circ P_i|\leq \chi_2$ by Lemma \ref{lem:piece-tree-bound}.  Hence the distance in $T$ between the endpoints of $\pi\circ P$ is at most $7\chi_2$.  This contradicts that $\dist_T(U,V)>100(\chi_0+\chi_2+2L+1)$.
\end{proof}

\begin{lem}\label{lem:b6-check}
Let $K\geq K_0$, where $K_0$ is as in Lemma \ref{lem:K-large}.  Equipping the hyperplanes of $\widetilde Y_q,\ q\in\mathcal Q$ with the equivalence relation $\sim_q$ from Construction \ref{cons:B6-walls} makes $X^*$ a simplified $B(6)$ presentation in the sense of Definition \ref{defn:B6}.
\end{lem}

\begin{proof}
The $C'(1/14)$ condition holds by Lemma \ref{lem:metric-small-cancellation}.  Definition \ref{defn:B6}.\eqref{B6:equivalence} holds for each $\sim_q,\ q\in\mathcal Q$ by Construction \ref{cons:B6-walls}.  Given $q$, let $U,V$ be distinct hyperplanes of $\widetilde Y_q$ with $U\sim_qV$.  Then $U,V$ have disjoint carriers, by Lemma \ref{lem:K-large}, since $\pi$ is $1$--lipschitz.  In particular, $U,V$ cannot cross or osculate.  This verifies Definition \ref{defn:B6}.\eqref{B6:wallspace}.

Given a hyperplane $U=H\cap\widetilde Y_q$ of $\widetilde Y_q$, let $[U]$ be the $\sim_q$--class of $U$ and let $W=\cup_{V\in[U]}V$.  If $W=U$ then $\widetilde Y_q$ is partitioned into two halfspaces $\OL U,\OR U$ by $W$.  Otherwise, $W=U \sqcup q^KU^\dagger$ and, if $K\geq K_0$, we have (up to relabelling) that $\OL U \cap q^K\OR{U^\dagger}=\emptyset$.  Let $\OL W=\OL U\sqcup q^K\OR{U^\dagger}$ and let $\OR W=\widetilde Y_q-\OL W$.  Then the closures of $\OL W$ and $\OR W$ intersect in $W$, verifying Definition \ref{defn:B6}.\eqref{B6:wallspace_2}.

Next we check Definition \ref{defn:B6}.\eqref{B6:homotopy}.  Suppose that $P$ starts and ends on the carrier $N(W)$ of the wall $W$ (defined as above from a $\sim_q$--class) and is the concatenation of at most $7$ piece-paths.  By Lemma \ref{lem:K-large}, $P$ must have both endpoints on the same constituent hyperplane carrier $N(U)$ of $N(W)$, so since $\widetilde Y_q$ is CAT(0), $P$ is path-homotopic into $N(U)\subset N(W)$, giving item \eqref{B6:homotopy}.

Finally, consider the action of $\Aut(\widetilde Y_q\to G\backslash\widetilde X)=\stabilizer_G(\widetilde Y_q)=\langle q\rangle$ on $\widetilde Y_q$.  If $U\sim q^KU^\dagger$, then from Construction \ref{cons:B6-walls}, $qU\sim_q q^K(qU)^\dagger=q(q^KU^\dagger)$, verifying item \eqref{B6:aut}.  We have now verified all parts of the definition of the simplified $B(6)$ condition.
\end{proof}

\begin{lem}[Hypothesis \eqref{LS:strong separation} from Theorem \ref{thm:cubulating X star}]\label{lem:exits-strong-separation}
The following holds for each $q\in\mathcal Q$.  Let $\kappa\to\widetilde Y_q$ be a combinatorial geodesic joining $x,y\in\widetilde Y_q$ (in that order) and let $U_1,U_1'$ be hyperplanes in $\widetilde Y_q$ that are respectively intersections of $\widetilde Y_q$ with hyperplanes $H_1,H_1'$ of $\widetilde X$.  Suppose that $U_1\sim_q U_1'$.  Suppose that $\kappa= \kappa_0e_1\kappa_1$, where $e_1$ is dual to $U_1$.  Suppose that either $U_1'$ is proximate to $y$, or $\kappa_1$ crosses $U'_1$.  Then there is a hyperplane $V$ of $\widetilde Y_q$ such that $V$ separates $x,y$ but the $\sim_q$--class of $V$ does not have a representative that is proximate to $x$ or $y$. 
\end{lem}

\begin{proof}
Let $U$ be a hyperplane in $\widetilde Y_q$ and let $v\in\widetilde Y_q^{(0)}$.  Recall that $U$ is proximate to $v$ if there is a path $AB$ in $\widetilde Y_q$ whose initial $1$--cube is dual to $U$,  whose terminal $0$--cube is $v$, and where each of $AB$ is either a $1$--cube or a piece-path.  If $U$ is proximate to $v$, Lemma \ref{lem:piece-tree-bound} therefore implies that $\dist_T(\pi(U),\pi(v))\leq 2(\chi_2+1)$.

Now, if $U_1'$ crosses $\kappa_1$, let $e_1'$ be the $1$--cube dual to $U_1'$, so that $\kappa=\kappa_0e_1\kappa_1'e_1'\kappa_1''$.  Otherwise, $U_1'$ does not cross $\kappa_1$ but is proximate to $y$.  In this case, letting $\kappa'_1=\kappa_1$, we have $\kappa=\kappa_0e_1\kappa_1'$.  So in either case, $\pi\circ\kappa_1'$ is a geodesic of $T$ from $\pi(N(U_1))$ to a point at distance at most $2(\chi_2+1)$ from $\pi(U'_1)$.  Now, $\dist_T(\pi(U_1),\pi(U'_1))\geq 100(\chi_0+\chi_2+2L+1)$ by Lemma \ref{lem:K-large}, since we are assuming $U_1\sim_q U_1'$.  Thus $|\pi\circ\kappa_1'|>50\chi_2+50$.  Choose an edge $f$ of $T$ that lies on $\pi\circ\kappa_1'$ at a distance at least $20\chi_2+20$ from either endpoint.  Let $m_f$ be the midpoint of $f$ and let $V=\pi^{-1}(m_f)\cap\widetilde Y_q$, which is a vertical hyperplane in $\widetilde Y_q$.  The choice of $f$ ensures that $V$ separates $x$ and $y$ but is not proximate to either of them.  On the other hand, $f$ can also be chosen to lie at distance (measured in $T$) at most $60(\chi_0+\chi_2+2L+1)$ from $\pi(y)$, so another application of Lemma \ref{lem:K-large} shows that any $V'\neq V$ with $V\sim_qV'$ is too far from $y$ (measured in $T$) to be proximate to $y$.  Finally, from Construction \ref{cons:B6-walls}, we can choose $f$ so that any $V'$ such that $V'\sim_qV$ and $V\neq V'$ has the property that $\pi(x),\pi(V),\pi(y),\pi(V')$ occur along the axis $\pi(\widetilde Y_q)$ in that order, so the wall $[V]$ separates $x,y$ and $x$ is not proximate to $V'$ and hence not proximate to $[V]$.
\end{proof}

Now we can prove the theorem:

\begin{proof}[Proof of Theorem \ref{thm:cubulation-with-turns}]
From Lemma \ref{lem:b6-check}, $X^*=\langle X\mid \{\widetilde Y_q:q\in\mathcal Q\}\rangle$ is a simplified $B(6)$ presentation, where each $\widetilde Y_q$ is equipped with the wallspace structure from Construction \ref{cons:B6-walls}, and short innerpaths holds by Lemma \ref{lem:metric-small-cancellation}.  We will apply Theorem \ref{thm:cubulating X star}, taking $\mathcal R$ to be the full set of hyperplanes of $X$.  Hypothesis \eqref{LS:strong separation} is satisfied because of Lemma \ref{lem:exits-strong-separation}.  Hence applying Theorem \ref{thm:cubulating X star} and passing to the cube complex dual to the wallspace given in the theorem provides an action of $G$ on a CAT(0) cube complex $C_0$ such that one of the following holds for all $g\in G$:
\begin{itemize}
 \item $g$ is conjugate into $\langle q\rangle$ for some $q\in\mathcal Q$.
 \item $\|g\|_{C_0}>0$.
\end{itemize}
Indeed, conclusion \eqref{item:torsion} of Theorem \ref{thm:cubulating X star} cannot hold because $G$ is torsion-free.  Conclusion \eqref{item:not_preferred_cut} cannot hold since $g\neq 1$ implies $g$ is cut by a hyperplane in $\widetilde X$ (recall that the action on $\widetilde X$ was free) and all hyperplanes in $X$ are in $\mathcal R$.

Fix $q\in\mathcal Q$.  Then $q$ has axis in $\widetilde X$ lying in $\widetilde Y_q$, and every wall $W$ of $\widetilde{X^*}$ intersects $\widetilde Y_q$ in a $\sim_q$--class of hyperplanes, by Theorem \ref{thm:520}, so either $W$ intersects $\widetilde Y_q$ in a single hyperplane, or $W$ intersects $\widetilde Y_q$ in a pair of disjoint $\sim_q$--equivalent hyperplanes both cutting $q$; in the latter case, our choice of halfspaces in $\widetilde Y_q$ implies that $q$ is not cut by $W$.  In the former case, $W\cap\widetilde Y_q$ is either a wandering hyperplane that does not cut $q$, or it is one of $2L$ hyperplanes that cut $q$ and are unique in their $\sim_q$--classes (see Construction \ref{cons:B6-walls}).  Hence $\|q\|_{C_0}=2L$.  

Moreover, suppose that distinct hyperplanes $H,H'$ of $\widetilde X$ belong to the same wall of $\widetilde{X^*}$.  Then there is a sequence $\widetilde Z_1,\ldots,\widetilde Z_n$ of convex subcomplexes of $\widetilde X$, each equal to some translate of some $\widetilde Y_q$, and a sequence $H=H_0,H_1,\ldots,H_n=H'$ such that $H_i,H_{i+1}$ cross $\widetilde Z_i$ in equivalent hyperplanes of $\widetilde Z_i$.  Suppose that $H,H'$ both cross $\pi^{-1}(\tilde v)$ for some vertex $\tilde v\in T$.  Then $\pi^{-1}(\tilde v)$ is a vertex space of $\widetilde Z_1$, and thus $H,H'$ are hyperplanes belonging to the same wall and crossing a common cone $\widetilde Z_1$, so by Theorem \ref{thm:520}, letting $\widetilde Z_1=g\widetilde Y_q$, we have $g^{-1}H\sim_q g^{-1}H'$, while $\dist_T(\pi(g^{-1}H\cap \widetilde Y_q),\pi(g^{-1}H')\cap\widetilde Y_q)=0$, which is a contradiction with Lemma \ref{lem:K-large}.  This shows that each wall of $\widetilde{X^*}$ intersects each vertex space $\pi^{-1}(\tilde v)$ of $\widetilde X$ in at most one hyperplane.  In fact, as long as the constant $K$ was chosen large enough in terms of the acylindricity constant for the $G$--action on $T$, we have that, if $\gamma\in G$ is elliptic, then $\gamma$ is cut by exactly one wall in $\widetilde X^*$ for every hyperplane in $\widetilde X$ cutting $\gamma$, so $\|\gamma\|_{C_0}=\|\gamma\|_{\widetilde X}$.  In particular, 
$$\|t\|_{C_0}=\|t\|_{\widetilde X}=2L=\|q\|_{C_0},$$
for $t\in\mathcal T_v$, as required.  We have also shown that all nontrivial elements of $G$ have positive translation length in $C_0$, so $G$ acts on $C_0$ freely by Lemma \ref{lem:free}. This argument also shows that the $G$--action on $C_0$ is wall-independent for each $\mathcal T_v$, since that was true in $\widetilde X$.

\textbf{Wall-independence of $\mathcal Q$.} Let $q,q'\in\mathcal Q$ and let $g\in G$.  Suppose that $gqg^{-1}$ and $q'$ are cut by infinitely many common hyeperplanes of $C_0$, i.e. by infinitely many common walls of $\widetilde{X^*}$ coming from Theorem \ref{thm:cubulating X star}.  As noted above, any wall $W$ cutting $gqg^{-1}$ (respectively, $q'$) is an equivalence class of vertical walls in $\widetilde X$, and $W\cap g\widetilde Y_q$ is a single vertical hyperplane of $g\widetilde Y_q$ that cuts $gqg^{-1}$.  If $W$ also cuts $q'$, then $W$ intersects $\widetilde Y_{q'}$ in a single vertical hyperplane cutting $q'$.  Let $H,H'\in\mathcal V(\widetilde X)$ be the hyperplanes respectively cutting $q,q'$ and lying in $W$.  

Recall from \cite[Sec. 5]{Wise:QCH} that the \emph{thickened carrier} $T(W)$ is the union of the carriers of hyperplanes of in $W$, together with any $a\widetilde Y_p,\ a\in G,\ p\in\mathcal Q$ crossed by such a hyperplane.  By the $B(6)$ condition and \cite[Thm. 5.28]{Wise:QCH}, any two points of $T(W)$ are joined by a $\widetilde X$--geodesic lying in $T(W)$, and since $\pi:\widetilde X\to T$ is a restriction quotient, any such geodesic is sent by $\pi$ to an unparameterised geodesic in $T$.  So $\pi(T(W))$ is a subtree of $T$ consisting of all the edges dual to $\pi$--images of hyperplanes in $W$, together with exactly those axes (of conjugates of elements of $\mathcal Q$) that contain such edges.  Moreover, $\pi:T(W)\to \pi(T(W))$ sends geodesics to geodesics.  

Recall from Theorem \ref{thm:520} that there is exactly one hyperplane of $W$ intersecting $g\widetilde Y_q$, namely $H$.  Moreover, any subcomplex of the form $a\widetilde Y_p,\ a\in G,\ p\in\mathcal Q$ that crosses $H$ has the property that $a\axis(p)\cap g\axis(q)$ lies in the $\chi_1$--neighbourhood of $\pi(H)$, by Lemma \ref{lem:axis-subcomplex-overlap}.  From our choice of $K$, it follows that any path in $\pi(T(W))$ from $g\axis(q)$ to its complement must pass $\chi_1$--close to $\pi(H)$.  In particular, for all $n\in\integers-\{0\}$, we have $\pi(T(gq^{nK}g^{-1}W))\cap \pi(\axis(q')))=\emptyset$ and, more strongly, $\dist_T(\pi(T(gq^{nK}g^{-1}W)),\pi(\axis(q'))\geq \kappa n$, where $\kappa>0$ is independent of $W,g,q$.  

On the other hand, if infinitely many walls cut $gqg^{-1}$ and $q'$, then we may assume that infinitely many $g\langle q\rangle g^{-1}$--translates of $W$ cut both elements.  Therefore, for infinitely many values of $n$, we have that $\pi(T(gq^{nK}g^{-1}W))$ intersects both $g\axis(q)$ and $\axis(q')$, and hence intersects the projection of $\axis(q')$ to $g\axis(q)$, which has diameter bounded by $\chi_1$.  This is a contradiction, and proves that $\mathcal Q$ is wall-independent for the $G$--action on $C_0$.  

\textbf{Wall-independence of $\mathcal T_v\cup\mathcal Q$:}  Finally, for any $v\in\vertices(\bar T)$ and $t\in\mathcal T_v$, the hyperplanes of $\widetilde X$ that cut $t$ are all wandering, and therefore the walls in $\widetilde{X^*}$ that cut $t$ are formed from equivalence classes of wandering hyperplanes, and it was already noted that no such wall cuts any element of $\mathcal Q$.  Thus no wall cuts both $t$ and $q\in\mathcal Q$.  Meanwhile, any two elements of $\mathcal T_v$ (or $\mathcal Q$) were shown above to be cut by only finitely many common walls.  So $\mathcal T_v\cup\mathcal Q$ is wall-independent.
\end{proof}

We now apply Theorem \ref{thm:cubulation-with-turns} to acylindrical splittings over cyclic groups.

\begin{cor}[Cubulating acylindrical cyclic splittings]\label{cor:cyclic-splitting-turns}
Let $G$ act cofinitely and without inversions on the tree $T$.  Suppose that:
\begin{enumerate}
 \item \label{item:cyclic-acyl} The $G$--action on $T$ is $2$--acylindrical.
 
 \item \label{item:cyclic-cyclic} For each $e\in\edges(\bar T)$, the edge-group $G_e$ is a maximal infinite cyclic subgroup of $G$ generated by an element $t_e$.

 \item \label{item:cyclic-vertices} For each $v\in\vertices(\bar T)$, the vertex group $G_v$ acts freely and without inversions in hyperplanes on a CAT(0) cube complex $\widetilde X_v$, with finitely many orbits of hyperplanes.
 
 \item \label{item:cyclic-vertices} For each $v\in\vertices(\bar T)$, let $\mathcal T_v$ be the set of images of elements $t_e$ under edge maps $G_e\to G_v$ for all $e\in\edges(\bar T)$ incident to $v$.  Then the $G_v$--action on $\widetilde X_v$ is wall-independent for $\mathcal T_v$.

 \item There exists $L\in\integers_{>0}$ such that $\|t\|_{\widetilde X_v}=L$ for all $t\in\mathcal T_v,\ v\in\vertices(\bar T)$.  \label{item:cyclic-length}

 \item \label{item:separable-edge-groups}  The edge groups of $G$ are separable in $G$.
\end{enumerate}
Let $\mathcal Q\subset G$ be a finite set of $T$--hyperbolic elements such that each $q\in\mathcal Q$ generates a maximal cyclic subgroup of $G$. Then $G$ acts freely, with finitely many orbits of hyperplanes, on a CAT(0) cube complex $C_\sharp$, and:
\begin{enumerate}[(a)]
 \item \label{item:cyclic-conclusion-equalised} $\|q\|_{C_\sharp}=\|t\|_{C_\sharp}=2L$ for all $q\in\mathcal Q$ and $t\in\cup_{v\in\vertices(\bar T)}\mathcal T_v$.
 
 \item \label{item:cyclic-conclusion-independence} For each $v\in\vertices(\bar T)$, the set $\mathcal T_v\cup\mathcal Q$ is wall-independent for the action of $G$ on $C_\sharp$.
\end{enumerate}

\end{cor}

\begin{rem}
The proof would work for $k$--acylindrical actions, with $k>2$, at the expense of some additional complications.  Since, in our applications, we have $2$--acylindricity by Lemma \ref{lem:topmost-edges-acylindrical}, we have hypothesised it here.  In the proof, we work under the weaker hypothesis of $k$--acylindricity, except in one place that we will point out.
\end{rem}

\begin{proof}[Proof of Corollary \ref{cor:cyclic-splitting-turns}]
By hypothesis, the edge groups are separable, so since each $q\in\mathcal Q$ is hyperbolic on $T$ and $\mathcal Q$ is finite, we can pass to a finite-index normal subgroup $G'\leq G$ with the property that, for all $q\in\mathcal Q$, any generator of $\langle q\rangle\cap\mathcal Q$ has normal form projecting to an embedded cycle in the underlying graph $G'\backslash T$.

The hypotheses are still satisfied by the induced cyclic splitting of $G'$, once we replace $\mathcal Q\cup \bigcup_v\mathcal T_v$ by an algebraic elevation (see Definition \ref{defn:finite-set-elevation}).  Hence, if we exhibit a free $G'$--action on a CAT(0) cube complex $C_\sharp'$ satisfying conclusions \eqref{item:cyclic-conclusion-equalised},\eqref{item:cyclic-conclusion-independence} for the elevation, then we are done, by Lemma \ref{lem:star}.  We thus assume that the above embedding property already held for $\mathcal Q$ in $G$, so in particular, for all $q\in\mathcal Q$, consecutive edges in the axis of $q$ in $T$ belong to distinct $G$--orbits.

\textbf{Checking Definition \ref{defn:graph-walls}.}  We first verify that Definition \ref{defn:graph-walls} applies.  By hypothesis, we have a $G$--cocompact action on $T$ without inversions.  Since the action is acylindrical and each $\langle q\rangle,\ q\in\mathcal Q$ is a maximal cyclic subgroup of $G$, and the hypothesised free cubulations of the vertex stabilisers force $G$ to be torsion-free, each $\langle q\rangle=\stabilizer_G(\axis(q))$.  Finiteness of $\mathcal Q$ and acylindricity of the $G$--action on $T$ also provide a constant $\chi<\infty$ such that $\axis(q)\cap g\axis(q')$ has diameter at most $\chi$ for $g\in G,\ q,q'\in\mathcal Q$, unless $q=q'$ and $g\in\langle q\rangle$; this is clear from the definition of acylindricity.  This verifies Definition \ref{defn:graph-walls}.\eqref{item:stass-Q}.  Item \eqref{item:stass-cubulated} is hypothesised.

Now we verify item \eqref{item:stass-shape}.  Let $e\in\edges(\bar T)$, so that $G_e=\langle t_e\rangle$, and the images of $t_e$ in $G_{e^{\pm}}$ are elements $t^\pm\in \mathcal T_{e^\pm}$.  By definition, $G_{e}^\pm=\langle t^\pm\rangle\leq G_{e^\pm}$, and we have an isomorphism $\phi_e:G_e^-\to G_e^+$ given by $\phi_e(t^-)=t^+$.  Since $G_{e^\pm}$ acts freely on $\widetilde X_{e^\pm}$, without inversions in hyperplanes, \cite{Haglund:semisimple} supplies combinatorial geodesic axes $\widetilde X_e^\pm\subset\widetilde X_{e^\pm}$ for $t^\pm$.  Now, by hypothesis \eqref{item:cyclic-length}, the translation length of $t^\pm$ along $\widetilde X_e^\pm$ is $L$.  Since $\widetilde X_e^\pm$ is just a copy of the standard tiling of $\reals$ by $1$--cubes, the fact that $t^-$ and $t^+$ have the same translation length on their respective axes implies that there is a cubical isomorphism $\psi_e:\widetilde X_e^-\to\widetilde X_e^+$ such that $\psi_e(t^-x)=t^+\psi_e(x)$ for all $x\in\widetilde X_e^-$.

\textbf{Dehn-twisted walls.}  By hypothesis, $\widetilde X$ comes from applying Proposition \ref{prop:joining-walls} to an input graph of compatibly cubulated groups.  Recall from Remark \ref{rem:arranging-standing-assumption-with-isomorphisms} that there is a degree of freedom in choosing $\widetilde X$, namely we can modify the immersed walls in the space $Z$ from the proof of Proposition \ref{prop:joining-walls} by Dehn twisting in the edge-cylinders.  For any $\widetilde X$, item \eqref{item:join-vertex-space} says that each $\widetilde X_v$ is an isometric subcomplex of $\widetilde X$,  so translation lengths of all $t\in\bigcup_v\mathcal T_v$ are still $L$, and each $\mathcal T_v$ is still wall-independent for the $G_v$--action on its vertex space.

So, for each $\bar e\in\edges(\bar T$), let $n_e\in\integers_{\ge0}$ be a constant to be determined, and, as in Remark \ref{rem:arranging-standing-assumption-with-isomorphisms}, use Proposition \ref{prop:joining-walls} to produce a free action of $G$ on a CAT(0) cube complex $\widetilde X$ (satisfying all the conclusions of Proposition \ref{prop:joining-walls}), using $n_e$--fold Dehn-twisted walls in each edge space $X_e$.

Let $R$ be a constant to be chosen below, depending only on the input graph of groups, the cubulations of the vertex and edge groups, the edge-homomorphisms, and the finite set $\mathcal Q$.

Let $\bar e$ be an edge of $\bar T$.  Choose the $n_{\bar e}$ such that
$$\min\{n_{\bar e}, |n_{\bar e}\pm n_{\bar f}|:\bar e,\bar f\in\edges(\bar T),\ \bar e\neq \bar f\}\cdot L>R.$$

For instance, choose $n_{\bar e}=10^{\zeta_{\bar e}}(R+1)/L$, where the $\zeta_{\bar e}$ are positive integers that are distinct as we vary $\bar e$ among the finitely many edges $\bar e$ of $\bar T$.

\textbf{Choosing $R$ and verifying exits.}  We now choose $R$ and check that our choice of $n_e,\ e\in\bar T$, ensures that $\widetilde X$ has walls exiting $\mathcal Q$ (see Definition \ref{defn:walls-exit}).

Fix $q\in\mathcal Q$.  Then, by acylindricity, $\axis(q)$ contains a subgeodesic of the form $f_0e_0\delta f_1e_1$, where $\delta$ is a path whose length is at most $m$ (we allow the possibility that $\delta$ is trivial, but also that $\delta$ is empty and $e_0=f_1$), and $e_0,e_1$ are lifts of edges $\bar e_0,\bar e_1$ of $\bar T$, and the following holds for $i\in\{0,1\}$.  First, let $v_i=f_i\cap e_i$, let $t_i$ generate the stabiliser of $e_i$, let $s_i$ generate the stabiliser of $f_i$, let $t_i^\pm$ and $s_i^\pm$ be the images of $t_i,s_i$ in their terminal and initial vertex stabilisers.  Then $\langle t_i^-\rangle\cap \langle s_i^+\rangle=\{1\}$.  Call $f_ie_i$ a \emph{good length--$2$ subpath of $\axis(q)$}.  We can choose $f_0e_0,f_1e_1$ to be an innermost pair of good length--$2$ subpaths, in the sense that $e_0\delta f_1$ does not contain a good length--$2$ subpath.  Hence, if $f,e$ are consecutive edges of $e_0\delta f_1$, then $\stabilizer_G(e)=\stabilizer_G(f)$.  Let $v_i$ be the initial vertex of $e_i$.

Now, the $2$--acylindricity part of hypothesis \eqref{item:cyclic-acyl}  implies more: either $\delta=\emptyset$, and $f_1=e_0$, so our path is $f_0e_0e_1$, or $\delta$ is trivial, so our path is $f_0e_0f_1e_1$.  Up to replacing $q$ by $q^{-1}$, we can assume that $v_0$ projects to the initial vertex of $\bar e_0$ in $\bar T$.    

By the wall-independence hypothesis, the set $\mathcal C_i$ of hyperplanes in the vertex space $\widetilde X_{v_i}$ that cut both $s_i^+$ and $t_i^-$ is finite.  Let $A(s_i^+),A(t_i^-)$ be the axes of $s_i^+,t_i^-$ on $\widetilde X_{v_i}$.  Let $L_0$ be a subgeodesic of $A(t_i^-)$ that is as short as possible with the property that it crosses all elements of $|\mathcal C_0|$.  Define $L_1\subset A(s_1^+)$ analogously, for $|\mathcal C_1|$.  Note that $L_0$ and $L_1$ are determined by the length--$2$ paths $\bar f_0\bar e_0$ and $\bar f_1\bar e_1$ in $\bar T$ and the element $q$, so since there are finitely many such paths and $|\mathcal Q|<\infty$, there is a constant $R$, depending only on the graph of cubulated groups and the set $\mathcal Q$, such that $|L_0|,|L_1|\leq R$.  

Now, if $\delta$ is trivial, then $\psi_{e_0}(L_0)$ is a subgeodesic of $A(s_1^+)=A(t_0^+)$, and we can choose the constant $R$ (uniformly) with the property that $\dist_{\widetilde X_{v_1}}(\psi_{e_0}(L_0),L_1)\leq R$.  Indeed, there are finitely many possibilities for that distance because, up the the $G$--action, there are finitely many choices for $q$ and for the subpath $f_0e_0e_1$, and we take the maximum such distance.  Similarly, if $\delta$ is nontrivial, then $\psi_{f_1}(\psi_{e_0})(L_0)$ is a subsegment of $A(s_1^+)$ whose distance from $L_1$ we can assume is less than $R$, where $R$ depends only on the input graph of cubulated groups and the set $\mathcal Q$.

We now assume that the $n_{\bar e}$ were chosen as above, using this $R$.  Let $N=n_{\bar e_0}$ if $\delta=\emptyset$, and let $N=n_{\bar e_0}\pm n_{\bar f_1}$ if $\delta$ is trivial, where the sign is chosen according to whether the terminal vertex of $e_0$ projects to the initial or terminal vertex of $\bar f_1$.  In the latter case, since consecutive edges of $\axis(q)$ are in different $G$--orbits, $\bar e_0\neq \bar f_1$.

Now let $W$ be a wandering hyperplane in $\widetilde X$ that intersects $\widetilde Y_q$.  Up to translating by $\langle q\rangle$, either $\pi(W)\cap \axis(q)$ has diameter bounded in terms of the translation length of $q$ on $T$ (in which case the exit condition from Definition \ref{defn:walls-exit} is satisfied by the given $W,q$), or $W$ intersects $\widetilde X_{v_0}$ and $\widetilde X_{v_1}$; these intersections are respectively single hyperplanes $H_0,H_1$.  

If $H_0$ crosses $L_0$, then our choice of $n_e$ for our Dehn twists implies one of the following:
\begin{itemize}
    \item If $\delta=\emptyset$, the hyperplane $H_1$ crosses $(s_1^+)^N\psi_{e_0}(L_0)$, which is at distance at least $N\|s_1^+\|_{\widetilde X_{v_1}}-R>0$ along $A(s_1^+)$ from $L_1$.  Hence $H_0\in\mathcal C_0$ implies $H_1\not\in\mathcal C_1$.

    \item If $\delta$ is trivial, then $H_1$ crosses $(s_1^+)^N\psi_{f_1}(\psi_{e_0})(L_0)$, so again  $H_0\in\mathcal C_0$ implies $H_1\not\in\mathcal C_1$.
\end{itemize}

In either case, if $W$ intersects the vertex space corresponding to the initial vertex of $f_0$, then it cannot intersect the vertex space corresponding to the terminal vertex of $e_1$.  This implies that $\pi(W)\cap\axis(q)$ has diameter (in $T$) at most $4$.  Thus the exit condition (Definition \ref{defn:walls-exit}) holds for some $\chi_0$ depending only on the maximum of the $T$--translation lengths of $q\mathcal Q$.

\textbf{Recubulating with Theorem \ref{thm:cubulation-with-turns}.}  We have verified that all of the hypotheses of Theorem \ref{thm:cubulation-with-turns} are satisfied, so the theorem yields a free action of $G$ on a CAT(0) cube complex $C_\sharp$, with finitely many orbits of hyperplanes, such that each $q\in\mathcal Q$ and $t\in\cup_v\mathcal T_v$ has translation length $2L$, and each $\mathcal T_v\cup\mathcal Q$ is wall-independent.  This completes the proof.
\end{proof}

\begin{rem}[Cocompactness strategy]\label{rem:cocompactness}
It is likely one can choose the $G$--action on $C_\sharp$ from Corollary \ref{cor:cyclic-splitting-turns} to be cocompact if each $G_v$ acts on $\widetilde X_v$ cocompactly, and each $t\in \mathcal T_v$ generates a convex-cocompact cyclic subgroup of $G_v$.  First, one would have to arrange for the cube complex $\widetilde X$ from Proposition \ref{prop:joining-walls} to be cocompact.  In the simple scenario where the axes of $t_e^{\pm}$ in $\widetilde X_{e^\pm}$ and  have equivariantly isomorphic convex hulls was discussed in Remark \ref{rem:splice-cocompact}; the general case, where $t_e^+$ and $t_e^-$ are convex-cocompact in their respective vertex groups but do not have isomorphic convex hulls, does not seem likely to be much worse, under an additional assumption that would hold for the superlinear hierarchy of a free-by-$\integers$ group with cocompactly cubulated vertex groups, namely that at least one of $t_e^\pm$ is a rank-one isometry of $\widetilde X_{e^\pm}$.  

Once $G$ acts cocompactly on $\widetilde X$, arguing as in Remark \ref{rem:splice-cocompact} should ensure that the $t_e^\pm$ and $q\in\mathcal Q$ are convex-cocompact on $\widetilde X$.  The main issue would be to show that the walls constructed in Theorem \ref{thm:cubulation-with-turns} using the cubical presentation $\langle X\mid\{\widetilde Y_q:q\in\mathcal Q\}\rangle$ give a cocompact dual cube complex.  Now, each wall $W$ in $\widetilde X^*=\widetilde X$ from the proof of Theorem \ref{thm:cubulation-with-turns} is the union of hyperplanes of $\widetilde X$ belonging to the same $G$--orbit, and from the construction, $\stabilizer_G(W)$ acts cofinitely on the set of these hyperplanes, and if $H$ is a constituent hyperplane of $W$, then $\stabilizer_G(H)\leq \stabilizer_G(W)$.  So, since $G$--cocompactness of $\widetilde X$ implies $\stabilizer_G(H)$--cocompactness of $N(H)$, the $\stabilizer_G(W)$--action on $N(W)$ will be cocompact.  By choosing the constant $K$ in the proof of Theorem \ref{thm:cubulation-with-turns} sufficiently large, we imagine an argument similar to the thickened carrier discussion in the proof of Theorem \ref{thm:cubulation-with-turns} can be used to show that the convex hull $C(W)$ of $W$ in $\widetilde X$ is $\stabilizer_G(W)$--cocompact, (and probably that $\stabilizer_G(W)$ is the free product of the stabilisers of its constituent hyperplanes).  If $W_1,\ldots,W_n$ are pairwise crossing walls, then $C(W_1),\ldots,C(W_n)$ pairwise intersect in $\widetilde X$, so by the Helly property, they all intersect in some point $x$.  Since their stabilisers act uniformly cocompactly, it means that we can choose hyperplanes $H_i$ of $W_i$ so that each $H_i$ is uniformly close to $x$, and there are only finitely many possibilities for this collection since $G$ acts on $\widetilde X$ cocompactly.  This would verify cocompactness of the $G$--action on $C_\sharp$.  The fact that $q\in\mathcal Q$ is a Morse element of $G$ (as in Remark \ref{rem:splice-cocompact}) would then imply that $q$ remains convex-cocompact on $C_\sharp$.  The remaining thing would be to check this also holds for  $t\in\cup_v\mathcal T_v$.   
\end{rem}

\section{Applications to superlinear growth automorphisms}\label{sec:conclusion}
Let $F$ be a finite rank free group and let $\Phi\in\Out(F)$ be a polynomially growing outer automorphism with polynomial growth rate $d\geq 1$.  Let $G=F\rtimes_\Phi\integers$ be the mapping torus.

\begin{thm}\label{thm:main}
If $G$ is unbranched with tubular termini, then $G$ acts freely on a CAT(0) cube complex.
\end{thm}

\begin{proof}
By Lemma \ref{lem:star} and Remark \ref{rem:make-upg}, we can assume that $\Phi$ is UPG.  If $d\leq 1$, then the cubulation exists by Corollary \ref{cor:cubulated-with-exits}, so assume $d\geq 2$.

Using the top strata decomposition from Remark \ref{rem:topmost-edge-vertex-groups} and Lemma \ref{lem:topmost-edges-acylindrical}, and the assumption $d\geq 2$, we have a nontrivial $2$--acylindrical cyclic hierarchy for $G$ that terminates in a finite collection $G^i_0$ of groups, each of which is the mapping torus of a UPG automorphism of at most linear growth.  Since $G$ is unbranched, Lemma \ref{lem:unbranched-char} implies that each $G_0^i$ is unbranched.

We now induct on the length of the hierarchy, which is to say on $d$.  If $d=2$, $G$ is a finite graph $\Delta_1=G\backslash T_1$ of groups where the vertex groups are various $G_0^i=F^i\rtimes\langle t_i\rangle$.  The edge groups are infinite cyclic and the $G$--action on the Bass-Serre tree $T_1$ is acylindrical.  At each vertex group $G^i_0$, the set of incident edge groups is $\{\langle t_i\rangle\}\cup \{p^i_jt_i:j\in J_i\}$ for some finite set $J_i$.  Moreover, $G_0^i$ admits an acylindrical action on a tree $T_0^i$ coming from Propopsition \ref{prop:split-over-tori} (we allow the possibility that $T_0^i$ is a single vertex, in the case where $G_0^i=F_0^i\times \langle t_i\rangle$). 

If $T_0^i$ is nontrivial, then the set $ \{p^i_jt_i:j\in J_i\}\subset G_0^i$ is a set of  future attaching elements, in the sense of Definition \ref{defn:linear-hyperbolics}, because $T_0^i$--elliptic elements do not have cyclic centralisers.  We can now apply Corollary \ref{cor:cubulated-with-exits} to $G_0^i$ and $\mathcal Q_i= \{p^i_jt_i:j\in J_i\}$ to obtain an action of $G_0^i$ on a CAT(0) cube complex $\widetilde X_0^i$ whose walls exit the set $\mathcal Q_i$.

Each vertex group $G_0^i(w)$ in $G_0^i$ coming from the action on $T_0^i$ has the form $F_w\times \langle t_w\rangle$, so the singleton $\{t_w\}$ is obviously wall-independent for the action of the vertex group.  Moreover, Corollary \ref{cor:cubulated-with-exits} says that all of the $t_w$ have the same translation length in $\widetilde X_0^i$.  By subdividing each $\widetilde X_0^i$, we obtain some $L>0$ such that all of the $t_i$ have translation length $L$ on their $\widetilde X_0^i$.  

We then apply Theorem \ref{thm:cubulation-with-turns}  for each $i$ to modify $\widetilde X_0^i$ so that the free $G_0^i$--action has the property that $\mathcal Q_i\cup\{t_i\}$ is wall-independent and $\|t\|_{\widetilde X_0}=\|p^i_jt\|_{\widetilde X_0}=2L$ for all $p^i_jt\in\mathcal Q_i$.  (Theorem \ref{thm:cubulation-with-turns} does not require $2$--acylindricity, just acylindricity, and allows non-cyclic edge groups.)

If $T_0^i$ is trivial, i.e. $G_0^i=F_0^i\times\langle t_i\rangle$, then the incident edge groups correspond to quadratically growing edges $e$ in the relative train track representative, with $e$ mapping to $eP$, where $P$ is not a Nielsen path.  Hence $P$ is not in the vertex space corresponding to $F_0^i$, so the edge group corresponding to $e$ has image in $T_0^i$ generated by $\langle t_i\rangle$.  In other words, $\mathcal Q_i=\emptyset$ in this case.  Hence any free action of $T_0^i$ on a CAT(0) cube complex $\widetilde X_0^i$ vacuously has $\mathcal Q_i\cup\{t_i\}$ wall-independent and can be subdivided so that $\|t\|_{\widetilde X_0}=\|p^i_jt\|_{\widetilde X_0}=2L$ for all $p^i_jt\in\mathcal Q_i$.

Now Corollary \ref{cor:cyclic-splitting-turns}, applied to $G$ and the action on $T_1$, with the vertex group cubulations $\widetilde X_0^i$ as above, yields a free action of $G$ on a CAT(0) cube complex $C_\sharp$.  This completes the case $d=2$, but for the purposes of the induction, recall from Corollary \ref{cor:cyclic-splitting-turns} that, given a finite set $\mathcal Q$ of $T_1$--hyperbolic elements generating maximal cyclic subgroups of $G$, the action can be chosen so that $\|q\|_{C_\sharp}=\|t\|_{C_\sharp}$ whenever $t\in\cup_i\{t_i\}$ and each $\{t_i\}\cup\mathcal Q$ is wall-independent.  By subdividing, we can arrange for all of the preceding translation lengths (as $i$ varies) to be equal.

For $d>2$, we can now conclude by applying Corollary \ref{cor:cyclic-splitting-turns} inductively using the superlinear hierarchy of $G$.  (At each stage, the set $\mathcal Q$ is the set of non-elliptic generators of edge groups in the splitting at the next stage.)

Each application of Corollary \ref{cor:cyclic-splitting-turns} is justified because: the splittings in the superlinear hierarchy are $2$--acylindrical by Lemma \ref{lem:topmost-edges-acylindrical}; the hypotheses of the corollary that concern the cubulations of the vertex groups hold by induction, and hypothesis \eqref{item:separable-edge-groups} holds because abelian subgroups of free-by-cyclic groups are separable \cite[Cor. 2.10]{HughesKudlinska}.
\end{proof}

\begin{cor}[Fast cubulation]\label{cor:fast-cubulated}
If $\Phi\in\Out(F)$ has polynomial growth and is fast, then $G=F\rtimes_\Phi\integers$ acts freely on a CAT(0) cube complex.
\end{cor}

\begin{proof}
Lemma \ref{lem:fast-tubular} says that $G$ is unbranched with tubular termini, so Theorem \ref{thm:main} applies.
\end{proof}

\subsection{Genericity of fast mapping tori}\label{subsection:genericity}
One can ask if there is a reasonable sense in which being fast is a generic property of polynomial-growth automorphisms of $F_r$.  This is complicated by the fact that the usual ways of choosing elements of $\Out(F_r)$ randomly generically give irreducible elements when $r\geq 3$ (see \cite{KMPT}).  So having polynomial growth is non-generic in those senses.  We  use the following notion of genericity \emph{among UPG automorphisms} instead.  The observation that choosing a random UPG element amounts to choosing relative train track suffixes at random on a fixed graph was suggested by Naomi Andrew.

Let $N(r)=3r/2-1$.  Let $\Gamma$ be a connected graph of rank $r$ with at most $N(r)$ edges.  Let $f:\Gamma\to\Gamma$ be an improved relative train track map, i.e. a homotopy equivalence fixing all vertices of $\Gamma$, such that there is an ordering $E_1,\ldots,E_s$ of the edges and such that $f(E_i)=E_iP_i$ for all $i$, where $P_i$ is a (possibly trivial) immersed closed path using the edges $E_1,\ldots,E_{i-1}$ only.  Let $\ell(f)=\max_i|P_i|$. 

The map $f$ represents a UPG element of $\Out(F_r)$ and that all UPG elements have such representatives (by \cite[Thm. 5.1.8]{BFH:tits-I}; see \cite[Thm. 1.1]{BFH:kolchin} for the bound on $N(r)$).  We will not need to work with all the properties of $f$ given by aforementioned theorems, which is why we only stated some of them explicitly.

\begin{defn}\label{defn:RTT-sets}
For a given graph $\Gamma$ of rank $r$ and at most $N(r)$ edges, let $\ttm_\Gamma^r$ be the set of all relative train tracks $f:\Gamma\to\Gamma$ satisfying the conclusions of \cite[Thm. 5.1.8]{BFH:tits-I}.  Let $\ttm_\Gamma^r(L)$ be the subset consisting of those $f$ with $\ell(f)\leq L$.
\end{defn}

\begin{defn}\label{defn:ttm-graphs}
Let $\mathcal G_r$ be the set of connected, rank-$r$ graphs $\Gamma$ such that $\ttm^r_\Gamma$ is infinite and $|\edges(\Gamma)|\leq N(r)$.  For $\Gamma\in\mathcal G_r$, let $\edges_0(\Gamma)$ be the set of non-separating edges of $\Gamma$ and let $\edges_1(\Gamma)$ be the set of separating edges.  Let $\edges'_1(\Gamma)$ be the set of separating edges where one complementary component has rank at most $1$.
\end{defn}

\begin{defn}\label{defn:train-track-counting-sets}
Let $\ttm^r(L)=\bigcup_{\Gamma\in\mathcal G_r}\ttm_\Gamma^r(L)$ and let $\ttm^r=\bigcup_L\ttm^r(L)$.  For $f\in \ttm^r$, let $d(f)$ be the polynomial growth rate of the outer automorphism $O(f)$ represented by $f$.  Let $\Fast_{r}(L)$ be the set of such $f$ with $\ell(f)\leq L$ and $d(f)=r-1$.
\end{defn}

\begin{lem}[Exponential growth rates]\label{lem:graph-rank-growth}
For each $r\geq 2$, there exists $k_r>1$ such that the following holds.  Let $\Gamma$ be a finite connected graph of rank $r$ with at most $N(r)$ edges.  Let $v\in\vertices(\Gamma)$ and let $\gate_{\Gamma,v}(L)$ be the set of immersed closed paths in $\Gamma$ that have length at most $L$ and initial/terminal vertex $v$.  Let $k_{\Gamma,v}=\limsup_{L\to\infty}|\gate_{\Gamma,v}(L)^{1/L}|$.  Then $k_{\Gamma,v}\geq k_r$ for all $\Gamma,v$, and if $\Gamma'$ is a connected subgraph of $\Gamma$ of rank less than $r$, and $w\in\vertices(\Gamma')$, then $k_{\Gamma',w}<k_{\Gamma,v}$.
\end{lem}

\begin{proof}
Let $\widetilde\Gamma\to \Gamma$ be the universal cover and fix a lift $\tilde v\in\widetilde \Gamma$ of $v$.  The quantity $\gate_{\Gamma,v}(L)$ is just the number of $g\in\pi_1(\Gamma,v)\cong F_r$ such that $\dist_{\widetilde \Gamma}(g\tilde v,\tilde v)\leq L$, so $k_{\Gamma,v}$ is the exponential growth rate for the $F_r$--action on $\widetilde \Gamma$ as defined in \cite[Eqn. 1.2]{DahmaniFuterWise}.  

Let $\alpha$ be a path in $\Gamma$ from $v$ to $w$ such that $|\alpha|\leq N(r)$.  Let $\tilde\alpha$ be the lift of $\alpha$ at $\tilde v$, and let $\widetilde \Gamma'\leq \widetilde \Gamma$ be the component of the preimage of $\Gamma'$ that contains $\tilde w$.  Then $\gate_{\Gamma',w}(L)$ is the number of $g\in \stabilizer_{F_r}(\widetilde \Gamma')$ such that $\dist_{\widetilde \Gamma}(\tilde v,g\tilde v)\leq L+2N(r)$.  So $k_{\Gamma',w}$ is the exponential growth rate for the action of $\stabilizer_{F_r}(\widetilde \Gamma')$ on $\widetilde \Gamma$.  Hence \cite[Thm. 1.1]{DahmaniFuterWise} implies $k_{\Gamma',w}<k_{\Gamma,v}$, by the rank assumption on the subgraph $\Gamma'$.

Now we produce $k_r$.  By considering a spanning tree in $\Gamma$, we get $\widetilde\Gamma=F_r\cdot B$, where $B$ is the ball of radius $N(r)+1$ centred at $\tilde v$.  The Milnor-\u{S}varc lemma says that $\gate_{\Gamma,v}(N(r)+1)$ generates $F_r$, and that the orbit map $F_r\to \widetilde\Gamma$ given by $g\mapsto g\tilde v$ is a $(C_r,1)$--quasi-isometry for some $C_r$ depending only on $N(r)$, whee $F_r$ is given the word-metric associated to the generating set $\gate_{\Gamma,v}(N(r)+1)$.  In particular, for sufficiently large $L$,
$$\frac{1}{2}\lambda^{\frac{L}{2C_r}}\leq\gate_{F_r,1}(L/2C_r)\leq\gate_{\Gamma,v}(L),$$
where $\lambda>1$ is the uniform exponential growth rate (for word metrics) of $F_r$ (see \cite{delaHarpe}).  Since $\lambda$ is independent of the generating set and $C_r$ depends only on $r$, we get the required $k_r>1$.  
\end{proof}

The following is established in \cite[Lem. 2.16]{Macura:detour}:

\begin{lem}\label{lem:fast-edge}
Let $r\geq 2$ and let $f\in\ttm^r$.  Then $d(f)=r-1$ if and only if there exists an edge $E$ in $\Gamma$ (here $\Gamma$ is the domain of $f$) such that $E$ has polynomial growth rate $r-1$.  
\end{lem}

Now we can say in what sense having maximum possible growth rate is generic:

\begin{prop}[Being fast is generic]\label{prop:fast-generic}
Let $r\geq 2$.  Then 
$$\lim_{L\to\infty} \frac{|\Fast_r(L)|}{|\ttm^r(L)|}=1.$$
\end{prop}

Before the proof, we define some notation.

\begin{defn}\label{defn:fast-count}
For $\Gamma\in\mathcal G_r$, let $k_\Gamma(L)=|\ttm_\Gamma^r(L)|$ and let $k(L)=|\ttm^r(L)|$.  For each edge $E$ of $\Gamma$, let $k^E_\Gamma(L)$ be the number of $f\in\ttm_\Gamma^r(L)$ where the topmost edge in the filtration of $\Gamma$ associated to $f$ is $E$.  Let $m^E_\Gamma(L)$ be the number of such $f$ that have growth rate $r-1$ and topmost edge $E$, and $m_\Gamma(L)$ the number of such $f$ on $\Gamma$ with growth rate $r-1$ but no restriction on topmost edge.
\end{defn}

\begin{proof}[Proof of Proposition \ref{prop:fast-generic}]
Suppose $r\geq 2$.  Fix $\Gamma\in\mathcal G^r$.  

\textbf{Non-separating topmost edges.}  Let $E\in\edges_0(\Gamma)$ be a non-separating edge, with (possibly equal) vertices $v$ and $w$, and let $\Gamma'=\Gamma-\interior{E}$, which has rank $r-1$.  We first estimate $k^E_\Gamma(L)$.  Each relative train track map $f:\Gamma\to\Gamma$ with topmost edge $E$ restricts to a relative train track map $f'$ on $\Gamma'$, and $\ell(f')\leq L$.  There are $k_{\Gamma'}(L)$ choices for $f'$.  

Independently of the choice of $f'$, we extend $f'$ to some $f$ with the above properties by orienting $E$ and choosing a closed immersed path $P$ in $\Gamma'$ starting and ending at the terminal point of $E$, and declaring $f(E)=EP$.  Hence
$$k^E_\Gamma(L)\leq k_{\Gamma'}(L)[\gate_{\Gamma',v}(L)+\gate_{\Gamma',w}(L)]$$
if $v\neq w$, and 
$$k^E_\Gamma(L)\leq k_{\Gamma'}(L)\gate_{\Gamma',v}(L)$$
if $v=w$.  (The inequalities reflect that in principle, not all extensions of $f'$ to $f$ satisfy the extra conclusions of \cite[Thm. 5.1.8]{BFH:tits-I}.)

Now, $\rank(\Gamma')=r-1$, so $f'$  has growth rate $r-2$ if and only if $\Gamma'$ has an edge $E'$ which grows polynomially with rate $r-2$.  If $r>2$, then there are $m_{\Gamma'}(L)$ such choices for $f'$.  If $r=2$, then $N(r)=2$, and so $\Gamma'=E'$ is a loop fixed by $f'$, and we let $m_{\Gamma'}(L)=1$.  In either case, independently of $f'$, we extend to $f$ by choosing $P$ as above.

For a given $f'$ of growth rate $r-2$, let $E''$ be the topmost edge in the filtration of $\Gamma'$ associated to $f'$.  As in Lemma \ref{lem:stratify-linear-superlinear}, we can assume that $E''$ has the maximum possible growth rate among edges.  If $r>2$, then the map $f$ given by extending $f'$ via $f(E)=EP$ will have growth rate $r-1$ provided $P$ traverses $E''$, by \cite[Lem. 2.16]{Macura:detour} (and in this case \cite[Thm. 1.1]{BFH:kolchin} holds for $f$).  Hence, in this case, if $v\neq w$, we have
$$m_\Gamma^E(L)\geq m_{\Gamma'}(L)[\gate_{\Gamma',v}(L)+\gate_{\Gamma',w}(L)-\gate_{\Gamma''_v,v}(L)-\gate_{\Gamma''_w,w}(L)],$$
where $\Gamma''_v$ is the component of $\Gamma'-\interior{E''}$ containing $v$.  If $v=w$, we similarly get:
$$m_\Gamma^E(L)\geq m_{\Gamma'}(L)[\gate_{\Gamma',v}(L)-\gate_{\Gamma''_v,v}(L)].$$
If $r=2$, then $E''=E'$ and $\Gamma'=E'$ is a loop, and $m_\Gamma^E(L)=2L$ and $k^E_\Gamma(L)=2L+1$.

Now let $\epsilon>0$ be given.  Then there exists $L_0$ such that $\gate_{\Gamma',v}(L)-\gate_{\Gamma''_v,v}(L)>(1-\epsilon)\gate_{\Gamma',v}(L)$ for all $L\geq L_0$, and so $\gate_{\Gamma',v}(L)+\gate_{\Gamma',w}(L)-\gate_{\Gamma''_v,v}(L)-\gate_{\Gamma''_w,w}(L)>(1-\epsilon)[\gate_{\Gamma',v}(L)+\gate_{\Gamma',w}(L)]$, by Lemma \ref{lem:graph-rank-growth}.
Now, by induction on $r$, we can choose $L_0$ such that for all $L\geq L_0$, we have $m_{\Gamma'}(L)\geq (1-\epsilon)k_{\Gamma'}(L)$ (for any choice of $\Gamma$ and nonseparating edge $E$).  In the base case, we have $r=2=N(r)$ and this follows from  $m_\Gamma^E(L)=2L$ and $k^E_\Gamma(L)=2L+1$.

Hence for all $L\geq L_0$, we have for all $\Gamma\in\mathcal G_r$ and all nonseparating $E$ in $\Gamma$ that 
$$m_\Gamma^E(L)\geq (1-\epsilon)^2k_{\Gamma}^E(L).$$

\textbf{Edges with a small complementary component.} The preceding estimate holds by a virtually identical argument for edges $E$ of $\Gamma$ that are separating, but where one of the complementary components has rank $1$ (the statement holds up to uniformly enlarging $L_0$ --- the important thing is that there are only finitely many $\Gamma$ and $E$ involved, because of the bound $N(r)$ on the number of edges, so one can take maxima where needed).  

\textbf{Other separating edges.}  In the remaining case, $E$ is an edge with the property that removing $E$ from $\Gamma$ leaves two graphs $\Gamma_1,\Gamma_2$, respectively of ranks $r_1,r_2$ such that $1<r_1,r_2<r-1$ and $r_1+r_2=r$.  In this case, $m_\Gamma^E(L)=0$ for all $L$.  Let $E_1$ be a non-separating edge of $\Gamma_1$ and let $E_2$ be a non-separating edge of $\Gamma_2$.  Then for all sufficiently large $L$, we have 
$$k^E_{\Gamma_1}(L)k^E_{\Gamma_2}(L)\gate_{\Gamma_i}(L)\leq \epsilon  k_\Gamma^{E_{2-i+1}}(L).$$
Indeed, the right hand side counts relative train tracks $f$ on $\Gamma$ where $E_1,E_2$ are topmost for the restrictions to $\Gamma_1,\Gamma_2$, and $f(E)=E\cdot P$, where $P$ is confined to $\Gamma_i$.  For any such $f$, we can view $E_{2-i+1}$ as the topmost edge in $\Gamma$, since $E$ does not map over it.  Now, the proportion of $f$ with topmost edge $E_{2-i+1}$ such that $f(E_{2-i+1})$ stays in $\Gamma_{2-i+1}$ tends to $0$ as $L\to\infty$, by Lemma \ref{lem:graph-rank-growth}.

We also have $k_\Gamma^E(L)\leq k_{\Gamma_1}(L)k_{\Gamma_2}(L)[\gate_{\Gamma_1}(L)+\gate_{\Gamma_2}(L)]$ in this case.

\textbf{Combining the estimates.}  Let $\epsilon>0$.  For all  $L$ sufficiently large and all $\Gamma\in\mathcal G_r$, the final estimate in the separating case shows that
$$k_\Gamma(L)\leq (1+\epsilon)\sum_{E\in\edges_0(\Gamma)\cup \edges_1'(\Gamma)}k^E_\Gamma(L).$$

Note that for any $f\in\Fast_r(L)$ constructed as above, the edge $E$ is the unique edge of $\Gamma$ realising the growth rate, so it is the unique topmost edge for $f$.   Hence
$$m_\Gamma(L)=\sum_{E\in\edges_0(\Gamma)}m^E_\Gamma(L)+\sum_{E\in\edges_1'(\Gamma)}m^E_\Gamma(L)\geq (1-\epsilon)^2\sum_{E\in\edges_0(\Gamma)\cup\edges_1'(\Gamma)}k_\Gamma^E(L).$$
So for all sufficiently large $L$, 
$$\frac{|\Fast_r(L)|}{|\ttm^r(L)|}=\frac{\sum_{\Gamma\in\mathcal G_r}m_\Gamma(L)}{\sum_{\Gamma\in\mathcal G_r}k_\Gamma(L)}\geq \frac{(1-\epsilon)^2\sum_\Gamma \sum_E k_\Gamma^E(L)}{(1+\epsilon)\sum_\Gamma\sum_E k_\Gamma^E(L)},$$
as required.
\end{proof}

\section{No exit}\label{subsec:intro-counterexample-question}
Let $G_0=\langle a,e,f,t\mid[a,t], e^{-1}te=at,f^{-1}tf=a^{-1}t\rangle.$
This is the mapping torus of the linearly growing automorphism of $F_0=\langle a,e,f\rangle$ given by $a\mapsto a,e\mapsto ea,f\mapsto fa^{-1}$.  To compute the torus splitting, rewrite the presentation as 
$$G_0=\langle a,r,s,t,e,f\mid[a,t],[r,t],[s,t], eae^{-1}=r,faf^{-1}=s,e^{-1}te=at,f^{-1}tf=a^{-1}t\rangle,$$
which we view as a graph of groups with vertex group $G_v=\langle a,r,s,t\rangle=\langle a,r,s\rangle\times\langle t\rangle$ and two stable letters $e^{-1}$ and $f^{-1}$ respectively conjugating $\langle r,t\rangle$ and $\langle s,t\rangle$ to $\langle a,t\rangle$ using the isomorphisms $r\mapsto a,t\mapsto at$ and $s\mapsto a,t\mapsto a^{-1}t$.  The $\integers^2$ subgroup $\langle a,t\rangle$ therefore belongs to three distinct maximal $F_3\times\integers$ subgroups, all conjugate to $G_v$, so $G_0$ fails to be unbranched.

The results of \cite{Wise:tubular} give free $G_0$--actions on CAT(0) cube complexes containing walls corresponding to codimension--$1$ subgroups acting minimally on the Bass-Serre tree $T_0$ of the above double HNN extension: these walls intersect every vertex stabiliser in a horizontal subgroup, so the strong form of exiting from Definition \ref{defn:walls-exit} cannot be arranged for any $\mathcal Q$.  We first argue that this holds for any cubulation.

Lemma \ref{lem:generalised-axis} shows that if $G_0$ acts freely on a cube complex $\widetilde X$, then there is a $\langle a,t\rangle$--invariant plane $\widetilde P$ in $\widetilde X$, and each hyperplane cutting an element of $A:=\langle a,t\rangle$ intersects $\widetilde P$ in a line.  Hence there are lines $\{L_i^j:1\leq i\leq n,\ 1\leq j\leq m_i\}$ in $\widetilde P$ such that $L_i^j$ intersects $L_{i'}^{j'}$ if and only if $i\neq i'$, and the $A$--translates of this family of lines account for all hyperplanes crossing $\widetilde P$.  The subgroups $A,e^{-1}Ae,f^{-1}Af$ must be cut by exactly the same hyperplanes, so the hyperplanes intersect $e\widetilde P$ (resp. $f\widetilde P$) in $eAe^{-1}$--translates (resp. $fAf^{-1}$--translates) of the $eL_i^j$ and $fL_i^j$.  

Let $L_i^j$ be parallel to the vector $(x_i,y_i)$, whose components can be chosen to be integers since $A$ acts on $\widetilde P$ cocompactly; assume $y_i\geq 0$ for all $i$.  Here we view $a$ as the translation $(1,0)$ and $t$ as $(0,1)$.  Let $L_i$ be the line through the origin parallel to $L_i^j$.  The relations in the presentation correspond to the linear maps 
$$\phi_+=\begin{pmatrix}1&1\\0&1\end{pmatrix}\ \   \text{and}\ \ \phi_-=\begin{pmatrix}1&-1\\0&1\end{pmatrix}.
$$
Let $(x_i,y_i)$ be an integer vector parallel to $L_i$ with $y_i\ge 0$.  Then for all integers $w,z$, the intersection numbers of $\phi_{\pm}(w,z)$ with the collection $\{(x_i,y_i)\}_i$ must equal those for $(w,z)$, so
$$\sum_i|y_i-x_i|=\sum_i|y_i+x_i|=\sum_i|x_i|.$$
If any $x_i=0$, then $\sum_{x_i=0}|y_i|>0$, in which case 
$$\sum_{x_i,y_i\neq 0}|y_i\pm x_i|<\sum_{x_i,y_i\neq0}|x_i|,$$
which implies $x_i\neq 0$ for all $i$. This shows that if $H$ is a hyperplane cutting $\langle a,t\rangle$, then $\stabilizer_{G_0}(H)$ acts minimally on $T_0$,  as in the cubulation from \cite{Wise:tubular}.  In other words:

\begin{prop}\label{prop:no-exit}
For any finite set $\mathcal Q\subseteq G_0$ of future attaching elements, no free $G_0$--action on a CAT(0) cube complex has all walls exiting $\mathcal Q$.
\end{prop}

The raises the question of whether one can cubulate quadratic mapping tori where $G_0$ is the terminal vertex group in the superlinear hierarchy.

\begin{prob}\label{prob:super-dehn}
Let $\mathcal H$ be the set of hyperplanes that cut $\langle a,t\rangle$ and let $\mathcal V$ be the hyperplanes that fail to cut $\langle a,t\rangle$ and hence fail to cut any vertex stabiliser.  By passing to $\widetilde X\times T_0$, assume that $\mathcal V\neq\emptyset$. Let $\mathcal Q\subset G_0$ be a finite set of future attaching elements.  Can $\widetilde X$ be chosen so that no hyperplane in $\mathcal H$ cuts any $qt\in\mathcal Q$?  Perhaps Dehn twists on the graph of groups are helpful.
\end{prob}

If the answer is yes, then the $G_0$--action on the resulting $\widetilde X$ does not have the exit property, but does satisfy a strong form of wall-independence: first, $\mathcal Q$ is wall-independent just because it is wall-independent for the action on $T_0$; second, for each $qt\in\mathcal Q$, there is \emph{no wall} that cuts both $t$ and $qt$.  By turning $\mathcal V$--hyperplanes (essentially a simplified form of Theorem \ref{thm:cubulation-with-turns} applied to the action on $T_0$), and subdividing, we can equalise the translation lengths of $t$ and all $qt$.  Corollary \ref{cor:cyclic-splitting-turns} then applies to the group $G_1$ arising as a multiple HNN extension of $G_0$ with stable letters conjugating $t$ to the various $qt$.  So a positive solution to Problem \ref{prob:super-dehn} would suggest that the unbranching hypothesis in Theorem \ref{thm:main} is unnecessary.  A negative solution would suggest that $\mathcal Q$ can be chosen so that the quadratic mapping torus $G_1$ is not cubulated.

\bibliographystyle{alpha}
\bibliography{poly-cube}
\end{document}